\newtheorem{thm}{Theorem}[section]
\newtheorem{lem}[thm]{Lemma}
\newtheorem{corollary}[thm]{Corollary}
\newtheorem{proposition}[thm]{Proposition}
\theoremstyle{definition}
\newtheorem{definition}[thm]{Definition}
\newtheorem{notation}[thm]{Notation}
\newtheorem{rmk}[thm]{Remark}
\def\CC{\mathcal{C}}
\def\I{\mathcal{I}}
\def\R{\mathcal{R}}
\def\T{\mathcal{T}}
\def\C{\mathscr{C}}
\def\M{\mathscr{M}}
\def\N{\mathscr{N}}
\def\PG{\mathrm{PG}}
\def\F{\mathbb{F}}
\def\P{\mathbf{P}}
\begin{document}
\title{On planes through points off the twisted cubic in $\PG(3,q)$ and multiple covering codes
\author{Daniele Bartoli\footnote{The research of  D. Bartoli, S. Marcugini, and F.~Pambianco was
 supported in part by the Italian
National Group for Algebraic and Geometric Structures and their Applications (GNSAGA - INDAM) and by
University of Perugia (Project: Curve algebriche in caratteristica positiva e applicazioni, Base Research
Fund 2018).} \\
{\footnotesize Dipartimento di Matematica e Informatica,
Universit\`{a}
degli Studi di Perugia, }\\
{\footnotesize Via Vanvitelli~1, Perugia, 06123, Italy. E-mail:
daniele.bartoli@unipg.it}
\and Alexander A. Davydov\footnote{The research of A.A.~Davydov was done at IITP RAS and supported by the Russian Government (Contract No 14.W03.31.0019).} \\
{\footnotesize Institute for Information Transmission Problems
(Kharkevich
institute), Russian Academy of Sciences}\\
{\footnotesize Bol'shoi Karetnyi per. 19, Moscow,
127051, Russian Federation. E-mail: adav@iitp.ru}
\and Stefano Marcugini$^*$ and Fernanda Pambianco$^*$  \\
{\footnotesize Dipartimento di Matematica e Informatica,
Universit\`{a}
degli Studi di Perugia, }\\
{\footnotesize Via Vanvitelli~1, Perugia, 06123, Italy. E-mail:
\{stefano.marcugini,fernanda.pambianco\}@unipg.it}}
}
\date{}
\maketitle

\textbf{Abstract.}
Let $\PG(3,q)$ be the projective space of dimension three over the finite field with $q$ elements.  Consider a twisted cubic in $\PG(3,q)$. The structure of the point-plane incidence matrix  in $\PG(3,q)$ with respect to the orbits of points and planes under the action of the stabilizer group of the twisted cubic is described. This information is used to view
generalized doubly-extended Reed-Solomon codes of codimension four as asymptotically optimal multiple covering codes.

\textbf{Keywords:} Twisted cubic, projective space, incidence matrix, multiple coverings, Reed-Solomon codes

\textbf{Mathematics Subject Classification (2010).} 51E21, 51E22, 94B05

\section{Introduction}
Let $\F_{q}$ be the Galois field with $q$ elements, $\F_{q}^*=\F_{q}\setminus\{0\}$, $\F_q^+=\F_q\cup\{\infty\}$.
Let $\PG(N,q)$ be the $N$-dimensional projective space over $\F_q$; it contains $\theta_{N,q}=(q^{N+1}-1)/(q-1)$   points. We denote by  $[n,k,d]_{q}R$ an ${\mathbb{F}}_{q}$-linear code of length $n$, dimension $k$,
minimum distance~$d$, and covering radius $R$.
 For an introduction to projective spaces over finite fields and connections between  projective geometry and
coding theory see \cite{Hirs_PGFF,Hirs_PG3q,HirsStor-2001,HirsThas-2015,EtzStorm2016}.

An $n$-arc in  $\PG(N,q)$, with $n\ge N + 1\ge3$, is a
set of $n$ points such that no $N +1$ points belong to
the same hyperplane of $\PG(N,q)$. An $n$-arc is complete if it is not contained in an $(n+1)$-arc. Arcs and linear maximum distance separable (MDS) $[n,k,n-k+1]_{q}$  codes are equivalent objects, see e.g. \cite{EtzStorm2016,MWS}.

In $\PG(N,q)$, $2\le N\le q-2$, a normal rational curve is any $(q+1)$-arc projectively equivalent to the arc
$\{(t^N,t^{N-1},\ldots,t^2,t,1):t\in \F_q\}\cup \{(1,0,\ldots,0)\}$.  The points (in homogeneous coordinates) of a normal rational curve in $\PG(N,q)$
treated as columns define a parity check matrix of a $[q + 1,q-N,N + 2]_q$ generalized doubly-extended Reed-Solomon (GDRS) code \cite{EtzStorm2016,Roth}. Clearly, a GDRS code is MDS. In $\PG(3,q)$, the normal rational curve is called a  \emph{twisted cubic} \cite{Hirs_PG3q,HirsThas-2015}. Twisted cubics have important connections with a number of other objects, see e.g. \cite{BonPolvTwCub,BrHirsTwCub,CLPolvT_Spr,CosHirsStTwCub,GiulVincTwCub,Hirs_PG3q,HirsStor-2001,HirsThas-2015,LunarPolv} and the references therein.

Twisted cubics in $\PG(3,q)$ have been widely studied; see  \cite{Hirs_PG3q} and the references therein.  In particular, in \cite{Hirs_PG3q}, the orbits of planes and points under the group of the projectivities fixing a cubic are considered.

In this paper we investigate the intersection multiplicities of planes and twisted cubics, determining  the structure of the point-plane incidence matrix  in $\PG(3,q)$. As a by-product, we also give a number of useful relations regarding these numbers.

As an application,  we show that  twisted cubics can be treated as  multiple $\rho$-saturating sets with $\rho=2$ which, in turn, give rise to  asymptotically optimal  non-binary linear multiple covering $[q+1,q-3,5]_q3$ codes of radius $R=3$. Thereby, we show that the $[q+1,q-3,5]_q3$ GDRS code associated with the twisted cubic can be viewed as an asymptotically optimal  multiple covering. Note that
 in the literature, see e.g. \cite{BDGMP_MultCov,BDGMP_MultCov2016,CHLL_CovCodBook,PDBGM-ExtendAbstr}, several examples of multiple coverings with $R=2$ and $\rho=1$ are given whereas asymptotically optimal multiple coverings with $R=3$ and $\rho=2$ are not considered.

The paper is organized as follows. Section \ref{sec_prelimin} contains preliminaries. In Section \ref{sec_main_res}, the main results of the paper are presented. Section \ref{sec_useful} provides  a number useful relations. In Sections \ref{sec_exac_res} and \ref{sec_points in planes} we compute the spectrum of the intersections between planes and twisted cubics,  and the structure of the point-plane incidence matrix  in $\PG(3,q)$ is described. Covering properties of the codes associated with twisted cubics are considered in Section~\ref{sec_multipl}.

\section{Preliminaries}\label{sec_prelimin}

For the convenience of readers, in this section we summarize known results on twisted cubics \cite[Chapter 21]{Hirs_PG3q} and on multiple covering codes \cite{BDGMP_MultCov,BDGMP_MultCov2016,CHLL_CovCodBook,PDBGM-ExtendAbstr}.
\subsection{Twisted cubic}\label{subset_twis_cub}
Let $\P(x_0,x_1,x_2,x_3)$ be a point of $\PG(3,q)$ with the homogeneous coordinates $x_i\in\F_{q}$; the rightmost nonzero coordinate is equal to $1$. For $t\in\F_q^+$, let  $P(t)$ be a point such that
\begin{align*}
&P(t)=\P(t^3,t^2,t,1)\text{ if }t\in\F_q,\displaybreak[3] \\
&P(\infty)=\P(1,0,0,0).
\end{align*}
Let $\C\subset\PG(3,q)$ be the \emph{twisted cubic} consisting of $q+1$ points $P_1,\ldots,P_{q+1}$ no four of which are coplanar.
We consider $\C$ in the canonical form
\begin{align}\label{eq2_cubic}
\C=\{P_1,P_2,\ldots,P_{q+1}\}=\{P(t)|t\in\F_q^+\}.
\end{align}
Let $\boldsymbol{\pi}(c_0,c_1,c_2,c_3)\subset\PG(3,q)$, $c_i\in\F_q$, be the plane with equation
$c_0x_0+c_1x_1+c_2x_2+c_3x_3=0.$
The plane through three points $P(t_1),P(t_2),P(t_3)$ of $\C$ is
\begin{align}\label{eq2_plane3points}
    \boldsymbol{\pi}(1,-(t_1+t_2+t_3),t_1t_2+t_1t_3+t_2t_3,-t_1t_2t_3)\supset\{P(t_1),P(t_2),P(t_3)\}.
\end{align}
When the three points coincide with each other and $t_1=t_2=t_3=t$, we have, through the
 point $P(t)=\P(t^3,t^2,t,1)\in\C$, an \emph{osculating plane} $\pi_\text{osc}(t)$ such that
\begin{align}\label{eq2_osc pl}
&\pi_\text{osc}(t)=\boldsymbol{\pi}(1,-3t,3t^2,-t^3), ~~P(t)=\P(t^3,t^2,t,1)\in\pi_\text{osc}(t);\displaybreak[3]\\
&\pi_\text{osc}(\infty)=\boldsymbol{\pi}(0,0,0,1), ~~P(\infty)=\P(1,0,0,0)\in\pi_\text{osc}(\infty).\label{eq2_osc pl_inf}
\end{align}
The osculating plane $\pi_\text{osc}(t)$ meets $\C$ only in $P(t)$.
The osculating planes form the \emph{osculating developable} to $\C$, that is, a pencil of planes for $q\equiv0\pmod3$ or a cubic developable otherwise.

A \emph{chord} of $\C$ is a line through a pair of real points of $\C$ or a pair of complex conjugate points. In the last  case it is an \emph{imaginary chord}. If the real points are distinct, it is a \emph{real chord}. If the real points coincide with each other, it is a \emph{tangent.}

\begin{notation}\label{notation_1}
The following notation is used:
\begin{align*}
  &G_q && \text{the group of projectivities in } \PG(3,q) \text{ fixing }\C;\displaybreak[3] \\
  &\mathbf{Z}_n&&\text{cyclic group of order }n;\displaybreak[3] \\
  &\mathbf{S}_n&&\text{symmetric group of degree }n;\displaybreak[3] \\
  &\Gamma &&\text{the osculating developable to }  \C;\displaybreak[3]\\
  &\mathfrak{A}&&\text{the null polarity \cite[Chapter 2.1.5]{Hirs_PGFF}, \cite[Theorem 21.1.2]{Hirs_PG3q};}\displaybreak[3]\\
&\Gamma\text{-plane}  &&\text{an osculating plane of }\Gamma;\displaybreak[3]\\
&d_\C\text{-plane}&&\text{a plane containing \emph{exactly} $d$ distinct points of }\C,~d=0,1,2,3;\displaybreak[3]\\
&1_{\C}\setminus\Gamma\text{-plane}&&\text{a $1_\C$-plane not in $\Gamma$;}\displaybreak[3]\\
&\C\text{-point}&&\text{a point  of $\C$;}\displaybreak[3]\\
&\mu_\Gamma\text{-point}&&\text{a point  off $\C$ lying on \emph{exactly} $\mu$ osculating planes, }\mu_\Gamma=0,1,3,q+1;\displaybreak[3]\\
&\text{T-point}&&\text{a point  off $\C$  on a tangent to $\C$ for $q\not\equiv0\pmod 3$;}\displaybreak[3]\\
&\text{TO-point}&&\text{a point  off $\C$ on a tangent and one osculating plane for }q\equiv 0\pmod 3;\displaybreak[3]\\
&\text{RC-point}&&\text{a point  off $\C$  on a real chord;}\displaybreak[3]\\
&\text{IC-point}&&\text{a point  on an imaginary chord;}\displaybreak[3]\\
&A^{tr}&&\text{the transposed matrix }A;\displaybreak[3]\\
&\#S&&\text{the cardinality of a set }S;\displaybreak[3]\\
&t\text{-}(v,k,\lambda)&&\text{design on a set $V$ of $v$ ``points'' in which a ``block'' is a $k$-subset of $V$}\displaybreak[3]\\
&&&\text{and every $t$-subset of $V$ is contained  in exactly $\lambda$ blocks.}
\end{align*}
\end{notation}

The following theorem summarizes known results from \cite{Hirs_PG3q}.
\begin{thm}\label{th2_HirsCor4,5}
\emph{\cite[Chapter 21]{Hirs_PG3q}} The following properties of the twisted cubic $\C$ of \eqref{eq2_cubic} hold:

\textbf{\emph{A.}}
The group $G_q$ acts triply transitively on $\C$. Also,
\begin{align*}
& G_q\cong PGL(2,q),&& \text{ for }q\ge5; \displaybreak[3]\\
 & G_4\cong\mathbf{S}_5\cong P\Gamma L(2,4),&&\#G_4=2\cdot\#PGL(2,4)=120;\displaybreak[3]\\
 & G_3\cong\mathbf{S}_4\mathbf{Z}_2^3,&&\#G_3=8\cdot\#PGL(2,3)=192;\displaybreak[3]\\
 & G_2\cong\mathbf{S}_3\mathbf{Z}_2^3,&&\#G_2=8\cdot\#PGL(2,2)=48.
\end{align*}

\textbf{\emph{B.}} Let $q\ge5$. Under $G_q$, there are five orbits $\N_i$ of planes and five orbits $\M_j$ of points. These orbits have the following properties:

\textbf{\emph{(i)}} For all $q$, the orbits $\N_i$ of planes are as follows:
 \begin{align}\label{eq2_plane orbit_gen}
   &\N_1=\{\Gamma\text{-planes}\},~\#\N_1=q+1;~\N_{2}=\{2_\C\text{-planes}\}, ~\#\N_{2}=q(q+1);\displaybreak[3]\\
 &\N_{3}=\{3_\C\text{-planes}\}, ~  \#\N_{3}=\frac{q(q^2-1)}{6};~\N_{4}=\{1_\C\setminus\Gamma\text{-planes}\},~\#\N_{4}= \frac{q(q^2-1)}{2};\displaybreak[3]\notag\\
 &\N_{5}=\{0_\C\text{-planes}\},~\#\N_{5}=\frac{q(q^2-1)}{3} \,.\notag
 \end{align}

\textbf{\emph{(ii)}} For $q\not\equiv0\pmod 3$, the orbits $\M_j$ of points are as follows:
\begin{align}\label{eq2_point_orbits_gen}
&\M_1=\C,~\#\M_1=q+1;~ \M_2=\{\text{\emph{T}}\text{-points}\},~\#\M_2=q(q+1);\displaybreak[3]\\
& \M_3=\{3_\Gamma\text{-points}\}, ~\#\M_{3}=\frac{q(q^2-1)}{6};~\M_4=\{1_\Gamma\text{-points}\},~\#\M_{4}=\frac{q(q^2-1)}{2};\displaybreak[3]\notag\\
&\M_5=\{0_\Gamma\text{-points}\}, ~\#\M_{5}=\frac{q(q^2-1)}{3}\,.\notag
\end{align}
Also,
\begin{align}\label{eq2_=1_orbit_point}
 &\text{if } q\equiv1\pmod 3 \text{ then } \M_{3}\cup\M_{5}=\{\text{\emph{RC}-points}\}, ~\M_{4}=\{\text{\emph{IC}-points}\};\displaybreak[3]\\
 &\text{if } q\equiv-1\pmod 3\text{ then }\M_{3}\cup\M_{5}=\{\text{\emph{IC}-points}\},~
 \M_{4}=\{\text{\emph{RC}-points}\}.\label{eq2_=2_orbit_point}
\end{align}

\textbf{\emph{(iii)}} For $q\equiv0\pmod 3$, the orbits $\M_k$ of points are as follows:
\begin{align}\label{eq2_point_orbits_j=0}
&\M_1=\C,~\#\M_1=q+1;~\M_2=\{(q+1)_\Gamma\text{-points}\},~\#\M_2=q+1;\displaybreak[3]\\
&\M_3=\{\text{\emph{TO}-points}\},~\#\M_3=q^2-1;~\M_4=\{\text{\emph{RC}-points}\},~\#\M_4=\frac{q(q^2-1)}{2};\notag\displaybreak[3]\\
& \M_5=\{\text{\emph{IC}-points}\}, ~\#\M_5=\frac{q(q^2-1)}{2}\,. \notag
\end{align}

\textbf{\emph{C.}} \textbf{\emph{(i)}} In total, there are $\binom{q+1}{2}$ real chords of $\C$,  $q+1$ tangents to $\C$, and $\binom{q}{2}$ imaginary chords of $\C$.

\textbf{\emph{(ii)}} No two chords of $\C$ meet off $\C$. Every point off $\C$ lies on exactly one chord of~$\C$.

\textbf{\emph{D.}}  For $q\not\equiv0\pmod 3$, the null polarity $\mathfrak{A}$ interchanges $\C$ and $\Gamma$; also,
\begin{align}\label{eq2_null_pol}
    \M_i\mathfrak{A}=\N_i, ~\#\M_i=\#\N_i,~i=1,\ldots,5.
\end{align}
\end{thm}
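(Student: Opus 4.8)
This is classical, essentially \cite[Chapter~21]{Hirs_PG3q}; I describe the route I would take. For Part~A I would identify $\C$ with $\PG(1,q)$ through the degree-$3$ Veronese map $t\mapsto P(t)$. The action of $PGL(2,q)$ on the four-dimensional space of binary cubic forms realizes $PGL(2,q)$ as a subgroup of the projectivities of $\PG(3,q)$ stabilizing $\C$ and inducing on $\C$ the sharply $3$-transitive action on $\PG(1,q)$; this gives the inclusion ``$\supseteq$''. For ``$\subseteq$'' with $q\ge5$ (so $q+1\ge6$): the $q+1$ points of $\C$ lie on a unique twisted cubic, so a projectivity stabilizing $\C$ is determined by the permutation it induces on the points of $\C$, and the automorphisms of a rational normal curve induced by projectivities are exactly the projective transformations of the parameter line, whence $G_q\cong PGL(2,q)$ (no field automorphism can occur). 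The three small cases $q\in\{2,3,4\}$, where $\C$ fails to determine the curve, are settled separately.

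For Part~B\,(i) the observation is that a plane meets $\C$ in an effective, $\F_q$-rational, degree-$3$ divisor on $\PG(1,q)$, and up to the Galois action there are exactly five types of such divisors: $3P$; $2P+Q$ with $P\neq Q$; $P+Q+R$ with $P,Q,R$ distinct; $P+\{Q,Q^{\sigma}\}$ with $Q$ of degree $2$; and $\{Q,Q^{\sigma},Q^{\sigma^{2}}\}$ with $Q$ of degree $3$. These are respectively the $\Gamma$-planes, $2_\C$-planes, $3_\C$-planes, $1_\C\setminus\Gamma$-planes and $0_\C$-planes; since $PGL(2,q)$ is sharply $3$-transitive and is transitive on conjugate pairs of $\PG(1,q^{2})$ and on conjugate triples of $\PG(1,q^{3})$, each type is a single $G_q$-orbit, and counting the divisors of each type yields \eqref{eq2_plane orbit_gen} (whose five cardinalities sum to $\theta_{3,q}=(q+1)(q^{2}+1)$, as a check). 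Part~C is pure counting: the numbers of real chords, tangents and imaginary chords equal the numbers of unordered pairs of rational points of $\PG(1,q)$, of points of $\PG(1,q)$, and of conjugate pairs of points of $\PG(1,q^{2})\setminus\PG(1,q)$; two distinct chords meeting at a point off $\C$ would be coplanar and their plane would meet $\C$, over the relevant extension, in a divisor of degree $\ge4$, which is impossible; and then $\sum_{\ell}\#(\ell\setminus\C)=\frac{q(q^{2}-1)}{2}+q(q+1)+\frac{q(q^{2}-1)}{2}=q^{2}(q+1)=\#(\PG(3,q)\setminus\C)$ forces every point off $\C$ onto exactly one chord.

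For Part~B\,(ii)--(iii) I would use Part~C: the points off $\C$ fall into T-points, RC-points and IC-points, $G_q$ is transitive on each family of chords (it is $3$-transitive on $\C$ and transitive on the non-rational points of $\PG(1,q^{2})$), so each class breaks into as many $G_q$-orbits as the stabilizer of one fixed chord has orbits on its points lying off $\C$. For a tangent that stabilizer is the stabilizer of its point of $\C$, of order $q(q-1)$, and it acts on the $q$ points off $\C$ as the full affine group $\mathrm{AGL}(1,q)$, hence transitively (one orbit, $\M_2$), provided $q\not\equiv0\pmod 3$; for a real chord (resp.\ an imaginary chord) the stabilizer is dihedral of order $2(q-1)$ (resp.\ $2(q+1)$) and acts on the $q-1$ (resp.\ $q+1$) points off $\C$ with one orbit when $3\nmid q-1$ (resp.\ $3\nmid q+1$) and with two orbits of relative sizes $1{:}2$ otherwise (a cube-coset effect). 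Multiplying by the numbers of chords and recording, for one representative per orbit, how many osculating planes pass through it --- equivalently, applying the polarity of Part~D --- labels the orbits $\M_3,\M_4,\M_5$ by the osculating-plane multiplicities $3,1,0$, giving \eqref{eq2_point_orbits_gen}--\eqref{eq2_=2_orbit_point}. When $q\equiv0\pmod 3$ the osculating developable degenerates to the pencil of planes through a fixed line (its axis): a point off $\C$ lies on one osculating plane, or on all $q+1$ if it is on the axis; the $q+1$ axis points form $\M_2$, the remaining $q^{2}-1$ points on tangents form $\M_3$, and since $3\nmid q-1$ and $3\nmid q+1$ the real and the imaginary chords each yield a single orbit of size $\frac{q(q^{2}-1)}{2}$, giving \eqref{eq2_point_orbits_j=0}. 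For Part~D (and $q\not\equiv0\pmod 3$) I would exhibit the null polarity $\mathfrak{A}$ with skew-symmetric matrix
\[
\begin{pmatrix}0&0&0&1\\0&0&-3&0\\0&3&0&0\\-1&0&0&0\end{pmatrix},
\]
which sends $\P(t^{3},t^{2},t,1)$ to $\boldsymbol{\pi}(1,-3t,3t^{2},-t^{3})=\pi_\text{osc}(t)$ and, being an involutory correlation, sends $\pi_\text{osc}(t)$ back to $P(t)$; thus $\mathfrak{A}$ interchanges $\C$ and $\Gamma$. Since a projectivity stabilizes $\C$ exactly when it stabilizes $\Gamma$ and $\mathfrak{A}$ conjugates one stabilizer onto the other, $\mathfrak{A}$ normalizes $G_q$ and carries $G_q$-orbits of points to $G_q$-orbits of planes; matching cardinalities (and, where these coincide, osculating-plane counts) then yields \eqref{eq2_null_pol}.

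The part I expect to be the real obstacle is the case analysis just indicated for Part~B\,(ii)--(iii): proving that exactly one of the set of RC-points and the set of IC-points fails to be a single $G_q$-orbit, with the dichotomy \eqref{eq2_=1_orbit_point}--\eqref{eq2_=2_orbit_point} governed by $q\bmod 3$ through the behaviour of cubes on a chord, and treating the degenerate osculating developable in characteristic $3$ on its own terms. Everything else reduces to divisor bookkeeping on $\PG(1,q)$, orbit--stabilizer counts, and the explicit polarity displayed above.
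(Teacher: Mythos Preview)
The paper does not prove this theorem at all: it is stated in the preliminaries section as a summary of known results, with the single attribution ``\cite[Chapter~21]{Hirs_PG3q}'' and no argument given. There is therefore nothing in the paper to compare your sketch against.

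Your outline is a sound reconstruction of the classical argument and matches the spirit of what one finds in Hirschfeld: identifying $\C$ with $\PG(1,q)$ via the cubic Veronese, reading planes as effective degree-$3$ $\F_q$-rational divisors on $\PG(1,q)$ (whence the five plane orbits and their sizes), using the chord partition of the off-curve points together with orbit--stabilizer on each chord type, and exhibiting the null polarity explicitly for Part~D. Your identification of the delicate step --- the cube-coset analysis on real versus imaginary chords that yields the $q\bmod 3$ dichotomy, and the separate treatment of the degenerate osculating pencil in characteristic~$3$ --- is accurate. One small point worth tightening if you ever write this out in full: the assertion that the point-stabilizer in $PGL(2,q)$ acts on the $q$ off-curve points of a tangent as $\mathrm{AGL}(1,q)$ needs the actual computation of the induced action (it is not the na\"{\i}ve affine action on the parameter, and the kernel/image depend on the characteristic), which is exactly where the char-$3$ splitting into axis point plus TO-points appears.
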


\begin{rmk}\label{rem2_q+1 osc}
  For  $q\equiv0\pmod3$, $\Gamma$ is a pencil of $q+1$ planes, see \cite[Theorem 21.1.2(i)]{Hirs_PG3q}. Points lying on all these planes (the orbit $\M_2$) form a line external to $\C$. All $d_\C$-planes with $d=0,1,2,3$ intersect this line.
\end{rmk}
\subsection{The point-plane incidence matrix  of $\PG(3,q)$}\label{subsec_incid}

Let  $\I$ be the $\theta_{3,q}\times\theta_{3,q}$ point-plane incidence matrix  of $\PG(3,q)$ in which columns correspond to points, rows correspond to planes, and there an entry is  ``1'' iff the corresponding point belongs to the corresponding plane. Every column and every row of $\I$ contains exactly $\theta_{2,q}$ ones, i.e. $\I$ is a tactical configuration \cite[Chapter 2.3]{Hirs_PGFF}.
Moreover, $\I$ gives a symmetric 2-$(\theta_{3,q},\theta_{2,q},q+1)$ design as there are exactly $q+1$ planes through any two points of $\PG(3,q)$.

For  $q\ge5$,  orbits $\N_i$ and $\M_j$ partition $\I$ into 25 submatrices $\I_{ij}$, with $i,j=1,\ldots,5$, where $\I_{ij}$ has size $\#\N_i\times\#\M_j$.

It is clear (see Lemma \ref{lem4_pointinplane}) that every plane of $\N_i$ contains the same number of points from  $\M_j$;  we denote this number as $k_{ij}$. And vice versa, through every point of $\M_j$ we have the same number of planes from~$\N_i$; we denote this number as $r_{ij}$. This means that $\I_{ij}$ contains $k_{ij}$ ones in each row and $r_{ij}$ ones in each column, i.e. $\I_{ij}$ \emph{is a tactical configuration}.

Tactical configurations are useful in several distinct areas, in particular, to construct bipartite graph codes, see e.g.  \cite{BargZem,DGMP_BipGraph,HohJust} and the references therein.

\subsection{Linear multiple covering codes and multiple saturating sets}
 Let $\F_{q}^{n}$ be
the space of $n$-dimensional vectors over $\F_{q}$. Consider a linear code $C\subseteq \F_{q}^{n}$ and denote by $A_{w}(C)$ the number of its codewords of weight $w$. Let $d(x,c)$ be the Hamming distance between vectors $x$ and $c$ of~${\mathbb{F}}_{q}^{n}$ and denote by $d(x,C)=\min\limits_{c\in C}d(x,c)$ the distance between  $x$ and $C$.

\begin{definition}\label{def2_MCFcode}
\cite{BDGMP_MultCov,CHLL_CovCodBook,PDBGM-ExtendAbstr} An $[n,k,d]_{q}R$ code $C$ is  an
\emph{$(R,\mu )$ multiple covering of the farthest-off points} $((R,\mu)$-MCF code for short) if for all $x\in {\mathbb{F}}_{q}^{n}$ such that $
d(x,C)=R$ the number of codewords $c$ such that $d(x,c)=R$ is at least $\mu $.
\end{definition}

In the literature, MCF codes are also called \emph{multiple coverings of deep holes}.

The covering quality of an $[n,k,d(C)]_{q}R$ MCF code $C$ is characterized by its \emph{$\mu$-density} $\gamma _{\mu }(C,R,q)\ge1$ so that
\begin{align}
&\gamma _{\mu }(C,R,q)=\frac{{\binom{n}{{R}}}(q-1)^{R}-{\binom{2R-1}{{R-1}
}} A_{2R-1}(C)}{\mu  \left(q^{n-k}-\sum\limits_{i=0}^{R-1}{\binom{n}{{i}}}(q-1)^{i}\right)}~\text{ if }~d(C)\geq
2R-1,  \label{eq2_mudens_d>=2R-1}
\end{align}
see  \cite[Proposition 2.3]{BDGMP_MultCov}, \cite[Proposition 1]{BDGMP_MultCov2016}.
From the covering problem point of view, the best codes are those with small $\mu$-density.
If $\gamma _{\mu }(C,R,q)=1$ then  $C$ is called perfect MCF code. If the $\mu$-density $\gamma_\mu(C,R,q)$ tends to 1 when $q$ tends to infinity we have an \emph{asymptotically optimal collection of MCF codes} or, in another words, an \emph{asymptotically optimal multiple covering}.

\begin{definition}\label{def2_M1-M3}
\cite{BDGMP_MultCov,PDBGM-ExtendAbstr}
Let $S$ be an $n$-subset of points of $\PG(N,q)$. Then $S$
is said to be $(\rho ,\mu )$-saturating if:
\begin{description}
\item[(M1)] $S$ generates $\PG(N,q)$;
\item[(M2)] there exists a point $Q$ in $\PG(N,q)$ which does not belong to
any subspace of dimension $\rho -1$ generated by the points of $S$;
\item[(M3)] every point $Q$ in $\PG(N,q)$ not belonging to any subspace of
dimension $\rho -1$ generated by the points of $S$ is such that the number
of subspaces of dimension $\rho $ generated by the points of $S$ and
containing $Q$ is at least $\mu $.
\end{description}
\end{definition}
Here we slightly simplified the corresponding definition of \cite{BDGMP_MultCov,PDBGM-ExtendAbstr}.
\begin{definition}
\label{def_minimal saturating set}\cite{BDGMP_MultCov} A $(\rho ,\mu )$-saturating $n$-set in $\PG(N,q)$ is called \emph{minimal }if it does not contain a $(\rho ,\mu )$-saturating $(n-1)$-set in $\PG(N,q)$.
\end{definition}

\begin{proposition}\label{prop2_connection}
   \emph{\cite[Proposition 3.6]{BDGMP_MultCov}} Let $S$  be a $(\rho ,\mu )$-saturating  $n$-set in $\PG(n-k-1, q)$. Let
a linear $[n,k]_qR$ code $C$  admit a parity-check matrix whose
columns are homogeneous coordinates of the points in $S$. Then $C$  is a $(\rho+1 ,\mu )$-MCF code.
\end{proposition}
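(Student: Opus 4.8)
The plan is to pass from the coding-theoretic statement to a purely geometric one by means of the standard syndrome dictionary, after which conditions (M1)--(M3) can be read off directly. First I would fix notation: let $H$ be the given parity-check matrix, an $(n-k)\times n$ matrix whose columns $h_1,\dots,h_n$ are representatives of the $n$ points of $S\subset\PG(n-k-1,q)$ (homogeneous coordinates in that space being $(n-k)$-tuples). For $x\in\F_q^n$ put $s=Hx^{tr}$; the coset $x+C$ depends only on $s$, the codewords $c$ with $d(x,c)=m$ are in bijection with the weight-$m$ vectors $e$ satisfying $He^{tr}=s$ via $c\mapsto e=x-c$, and $d(x,C)$ is the least weight $w(s)$ of such an $e$. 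The dictionary entry I would state and prove is: for nonzero $s$ and $m\ge1$, one has $w(s)\le m$ if and only if $[s]\in\PG(n-k-1,q)$ lies in the span of some $m$ of the points of $S$; and if $w(s)=m$, then in every expansion $s=\sum_{i\in I}\lambda_i h_i$ with $|I|=m$ all $\lambda_i$ are nonzero and the points $\{h_i:i\in I\}$ are linearly independent, since any vanishing coefficient or linear relation would let one rewrite $s$ using fewer columns of $H$. Using (M1) --- which guarantees that $H$ has full row rank and that $S$ spans $\PG(n-k-1,q)$ --- the condition ``$w(s)\le\rho$'' is then equivalent to ``$[s]$ lies in some subspace of dimension $\rho-1$ generated by the points of $S$''.

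With this dictionary I would first identify the covering radius. A point $Q=[s]$ with $w(s)\ge\rho+1$ is precisely a point of $\PG(n-k-1,q)$ lying on no $(\rho-1)$-dimensional subspace generated by $S$; for each such $Q$, condition (M3) furnishes at least $\mu\ge1$ subspaces of dimension $\rho$ generated by $S$ and passing through $Q$, and any $\rho+1$ independent points of $S$ spanning one of them witness $w(s)\le\rho+1$. Hence $w(s)\le\rho+1$ for all $s$, while (M2) supplies an $s$ with $w(s)=\rho+1$; thus the covering radius of $C$ equals $\rho+1$, which pins down $R=\rho+1$, and the deep holes of $C$ are exactly the vectors $x$ for which $Q=[Hx^{tr}]$ is as in (M2) and (M3).

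For the multiple-covering property itself I would fix an arbitrary deep hole $x$, so $d(x,C)=R=\rho+1$, set $Q=[s]$ with $s=Hx^{tr}$, and note that $Q$ lies on no $(\rho-1)$-dimensional subspace generated by $S$. By (M3) there are at least $\mu$ distinct $\rho$-dimensional subspaces generated by $S$ and passing through $Q$; to each such $\Pi$ I associate $\rho+1$ points $T\subseteq S\cap\Pi$ spanning $\Pi$ together with the weight-$(\rho+1)$ vector $e$ supported on $T$ coming from the unique expansion $s=\sum_{i\in T}\lambda_i h_i$, whose coefficients are all nonzero by minimality of $w(s)$. Distinct subspaces yield distinct such vectors, because $\Pi=\langle\mathrm{supp}(e)\rangle$ is recovered from $e$; therefore there are at least $\mu$ weight-$(\rho+1)$ vectors $e$ with $He^{tr}=s$, equivalently at least $\mu$ codewords at distance $R$ from $x$. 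Since $x$ was an arbitrary deep hole, $C$ is a $(\rho+1,\mu)$-MCF code.

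The step that needs genuine care --- and the only place where something could go wrong --- is the dictionary entry, particularly the assertion that the support of a minimum-weight coset representative is linearly independent (so that its span is genuinely a $\rho$-dimensional, not a lower-dimensional, subspace) and that all coefficients in the associated expansion are forced to be nonzero. This is precisely what makes the correspondence between minimum-weight error vectors and $\rho$-dimensional subspaces generated by $S$ exact, rather than a correspondence with arbitrary $(\rho+1)$-subsets of $S$; everything else reduces to coset bookkeeping and the counting argument above.
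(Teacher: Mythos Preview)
The paper does not prove this proposition; it is quoted from \cite[Proposition~3.6]{BDGMP_MultCov} and used as a black box in Section~\ref{sec_multipl}. Your argument is a correct, self-contained proof via the standard syndrome correspondence.

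The two places you flag as delicate are indeed the only ones that require care, and both are handled correctly. That a minimum-weight coset leader of weight $\rho+1$ has linearly independent support follows exactly as you say: a dependence among the corresponding columns would yield a codeword supported inside that set, and subtracting a suitable multiple would drop the weight. The nonvanishing of all coefficients is then immediate from minimality. Your injectivity step---recovering the subspace $\Pi$ from the error vector $e$ as $\Pi=\langle\{h_i:i\in\mathrm{supp}(e)\}\rangle$---is what turns the $\mu$ subspaces of (M3) into $\mu$ distinct nearest codewords, and is clean as stated. The identification $R=\rho+1$ from (M2) together with one instance of (M3) (with $\mu\ge1$) is also correct; note that (M1) is what guarantees surjectivity of the syndrome map, so that every point of $\PG(n-k-1,q)$ actually occurs as some $[s]$, which you do observe.
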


Proposition \ref{prop2_connection} allows us to consider $(\rho ,\mu )$-saturating sets as linear $(\rho+1 ,\mu )$-MCF codes and vice versa.

\begin{rmk}
 An $[n,k,d]_{q}R$ MCF code provides multiple coverings of the \emph{farthest-off points} or \emph{deep holes}, i.e. vectors of $\F_{q}^{n}$ lying on distance $R$ from the code. There are the useful relations of the deep holes with bounds on the size of the lists in the list decoding~\cite[Chapter 9]{Roth} of generalized Reed-Solomon codes, see e.g.\ \cite{HongWu2016,JusHoh2001,KaipaIEEE2017,XuXu2019,ZDK_DHRSarXiv2019} and the references therein. In particular, in \cite{KaipaIEEE2017,ZDK_DHRSarXiv2019}, the classification of deep holes of Reed-Solomon codes with redundancy 3 and 4 is considered.
\end{rmk}

\section{Main results}\label{sec_main_res}
From now on we consider $q\ge5$ apart from Theorems \ref{th3_main}(B) and \ref{th6_q_2_3_4}.

Tables \ref{tab1} and \ref{tab2} and Theorem \ref{th3_main} summarize the results of  Sections \ref{sec_useful}--\ref{sec_points in planes}.

In particular, for the point-plane incidence matrix, Tables \ref{tab1} and \ref{tab2} show values $k_{ij}$ (top entry) and $r_{ij}$ (bottom entry) for each possible pair $(\N_i,\M_j)$, where $k_{ij}$  is the number of points from $\M_j$ in  every plane of $\N_i$, whereas $r_{ij}$ is the number of planes from $\N_i$ through every point of $\M_j$. In other words, $k_{ij}$  (resp. $r_{ij}$) is the number of ones in every row (resp. column) of the $\#\N_i\times\#\M_j$ submatrix $\I_{ij}$ of the point-plane incidence matrix.

\begin{table}[h]
\caption{Values $k_{ij}$ (the number of ones in every row, top entry) and $r_{ij}$ (the number of ones in every column, bottom entry) for the $\#\N_i\times\#\M_j$ submatrices $\I_{ij}$ of the point-plane incidence matrix  of $\PG(3,q)$, $q\equiv \xi\pmod3$, $\xi=-1,1$, $q\ge5$\smallskip}
$
\begin{array}{@{}c|c|c||c|c|c|c|c@{}}
  \hline
&\multicolumn{2}{c||}{\M_j\rightarrow}&\M_1&\M_2&\M_3&\M_4&\M_5  \\
 \N_i&\multicolumn{2}{c||}{} &\C\text{-points}&\text{T-points}
&3_\Gamma\text{-points}&1_\Gamma\text{-points}&0_\Gamma\text{-points}\\
\downarrow&\multicolumn{2}{c||}{}&q+1&q^2+q&\frac{1}{6}(q^3-q)&\frac{1}{2}(q^3-q)&\frac{1}{3}(q^3-q)\vphantom{H_{H_{H_{H_H}}}}\\\hline\hline
\N_1&\Gamma\text{-planes}&k_{1j}&1 &2q&\frac{1}{2}(q^2-q)&\frac{1}{2}(q^2-q)&0\vphantom{H^{H^{H^a}}}\\
&q+1&r_{1j}&1&2&3&1&0\\\hline
\N_2& 2_\C\text{-planes}& k_{2j}&2&2q-1&\frac{1}{6}(q^2-3q+2)&\frac{1}{2}(q^2-q)&\frac{1}{3}(q^2-1)\vphantom{H^{H^{H^a}}}\\
&q^2+q&r_{2j}&2q&2q-1&q-2&q&q+1\\\hline
\N_3&3_\C\text{-planes}&k_{3j}&3&q-2&\frac{1}{6}(q^2+\xi q+4)&\frac{1}{2}(q^2-\xi q)&\frac{1}{3}(q^2+\xi q-2)\vphantom{H^{H^{H^a}}}\\
&\frac{1}{6}(q^3-q)&r_{3j}&\frac{1}{2}(q^2-q)&\frac{1}{6}(q^2-3q+2)& \frac{1}{6}(q^2+\xi q+4)&\frac{1}{6}(q^2-\xi q)&\frac{1}{6}(q^2+\xi q-2)\vphantom{H^{H^{H^H}}}\vphantom{H_{H_{H_{H_H}}}}\\\hline
\N_4&1_\C\setminus\Gamma\text{-}&k_{4j}&1&q&\frac{1}{6}(q^2-\xi q)&\frac{1}{2}(q^2+\xi q)&\frac{1}{3}(q^2-\xi q)\vphantom{H^{H^{H^a}}}\\
&\text{planes}&&&&&\\
&\frac{1}{2}(q^3-q)&r_{4j}&\frac{1}{2}(q^2-q)&\frac{1}{2}(q^2-q)&\frac{1}{2}(q^2-\xi q)&\frac{1}{2}(q^2+\xi q)&\frac{1}{2}(q^2-\xi q)\vphantom{H_{H_{H_{H_H}}}}\\\hline
\N_5&0_\C\text{-planes} &k_{5j}&0&q+1&\frac{1}{6}(q^2+\xi q-2)&\frac{1}{2}(q^2-\xi q)&\frac{1}{3}(q^2+\xi q+1)\vphantom{H_{H_{H_H}}}\vphantom{H^{H^{H^a}}}\\
&\frac{1}{3}(q^3-q)&r_{5j}&0&\frac{1}{3}(q^2-1)&\frac{1}{3}(q^2+\xi q-2)&\frac{1}{3}(q^2-\xi q)&\frac{1}{3}(q^2+\xi q+1)\vphantom{H^{H^{H^H}}}\vphantom{H_{H_{H_{H_H}}}}\\\hline
\end{array}
$
\label{tab1}
\end{table}

\begin{table}[h]
\caption{Values $k_{ij}$ (the number of ones in every row, top entry) and $r_{ij}$ (the number of ones in every column, bottom entry) for the $\#\N_i\times\#\M_j$ submatrices $\I_{ij}$ of the point-plane incidence matrix  of $\PG(3,q)$, $q\equiv 0\pmod3$, $q\ge5$\smallskip}
$
\begin{array}{c|c|c||c|c|c|c|c}
  \hline
&\multicolumn{2}{c||}{\M_j\rightarrow}&\M_1&\M_2&\M_3&\M_4&\M_5  \\
&\multicolumn{2}{c||}{}&\C\text{-points}&(q+1)_\Gamma
&\text{TO-points}&\text{RC-points}&\text{IC-points}\\
\N_i&\multicolumn{2}{c||}{}&&\text{-points}&&\\
\downarrow&\multicolumn{2}{c||}{}&q+1&q+1&q^2-1&\frac{1}{2}(q^3-q)&\frac{1}{2}(q^3-q)\vphantom{H_{H_{H_{H_H}}}}\\\hline\hline
\N_1&\Gamma\text{-planes}&k_{1j}&1 &q+1&q-1&\frac{1}{2}(q^2-q)&\frac{1}{2}(q^2-q)\vphantom{H^{H^{H^a}}}\\
&q+1&r_{1j}&1&q+1&1&1&1\\\hline
\N_2& 2_\C\text{-planes}&k_{2j}&2&1&2q-2&\frac{1}{2}(q^2-q)&\frac{1}{2}(q^2-q)\vphantom{H^{H^{H^a}}}\\
&q^2+q&r_{2j}&2q&q&2q&q&q\\\hline
\N_3&3_\C\text{-planes}&k_{3j}&3&1&q-3&\frac{1}{2}(q^2+q)&\frac{1}{2}(q^2-q)\vphantom{H^{H^{H^a}}}\\
&\frac{1}{6}(q^3-q)&r_{3j}&\frac{1}{2}(q^2-q)&\frac{1}{6}(q^2-q)&\frac{1}{6}(q^2-3q)&
\frac{1}{6}(q^2+q)&\frac{1}{6}(q^2-q)\vphantom{H^{H^{H^H}}}\vphantom{H_{H_{H_{H_H}}}}\\\hline
\N_4&1_\C\setminus\Gamma\text{-planes}& k_{4j}&1&1&q-1&\frac{1}{2}(q^2-q)&\frac{1}{2}(q^2+q)\vphantom{H^{H^{H^a}}}\\
&\frac{1}{2}(q^3-q)&r_{4j}&\frac{1}{2}(q^2-q)&\frac{1}{2}(q^2-q)&\frac{1}{2}(q^2-q)&
\frac{1}{2}(q^2-q)&\frac{1}{2}(q^2+q)\vphantom{H^{H^{H^H}}}\vphantom{H_{H_{H_{H_H}}}}\\\hline
\N_5&0_\C\text{-planes} &k_{5j}&0&1&q&\frac{1}{2}(q^2+q)&\frac{1}{2}(q^2-q)\vphantom{H^{H^{H^a}}}\\
&\frac{1}{3}(q^3-q)&r_{5j}&0&\frac{1}{3}(q^2-q)&\frac{1}{3}q^2&
\frac{1}{3}(q^2+q)&\frac{1}{3}(q^2-q)\vphantom{H^{H^{H^H}}}\vphantom{H_{H_{H_{H_H}}}}\\\hline
\end{array}
$
\label{tab2}
\end{table}

\begin{thm}\label{th3_main} \textbf{\emph{A.}} Let $q\ge5$. Let $q\equiv \xi\pmod3$. The following holds:
\begin{description}
  \item[(i)]
In $\PG(3,q)$, let notations of planes, points, and incidence submatrices be as in Sections \emph{\ref{subset_twis_cub}} and \emph{\ref{subsec_incid}}. Then, for the point-plane incidence matrix, the values $k_{ij}$ (i.e. the number of distinct points in distinct planes) and $r_{ij}$ (i.e. the number of distinct planes through distinct points) are given by Tables \emph{\ref{tab1}} and \emph{\ref{tab2}}.
  \item[(ii)]
  Up to rearrangement of rows and columns, we have
  \begin{align*}
  &  \I_{ij}^{\,tr}=\I_{ji},~k_{ij}=r_{ji}, ~r_{ij}=k_{ji},~i,j=1,\ldots,5,~\text{for }\xi\ne0;\displaybreak[3]\\
  & \I_{41}^{tr}=\I_{14},~ \I_{41}^{tr}=\I_{15},~ \I_{42}^{tr}=\I_{14},~ \I_{42}^{tr}=\I_{15},~\text{for }\xi=0;\displaybreak[3]\\
 &\I_{i4}\text{ for $\xi=1$ is the same as $\I_{i5}$ for }\xi=0,~ i=1,\ldots,5;\displaybreak[3]\\
  &\I_{i4}\text{ for $\xi=-1$ and for $\xi=0$ is the same, $i=1,\ldots,5$}.
  \end{align*}
  \item[(iii)] The submatrix $\I_{21}$ gives a $2\text{-}(q+1,2,2)$ design and the submatrix $\I_{31}$ defines $3\text{-}(q+1,3,1)$ and $2\text{-}(q+1,3,q-1)$ designs.
\end{description}

\textbf{\emph{B.}} Let $q=2,3,4$. Then the point-plane incidence matrix can be represented as in Tables \emph{\ref{tab1}} and \emph{\ref{tab2}} if $\N_i,\M_j$ are orbits under a group isomorphic to $\mathbf{S}_{q+1}$, where $\mathbf{S}_{q+1}$ is isomorphic to a subgroup of $G_q$ for $q=2,3$, whereas $\mathbf{S}_{4+1}\cong G_4$, cf. Theorem \emph{\ref{th6_q_2_3_4}}.
\end{thm}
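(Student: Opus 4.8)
The plan is to establish Theorem \ref{th3_main} in three stages matching its three parts, with the bulk of the work devoted to part (A)(i), from which the rest follows by counting and bookkeeping.

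\textbf{Part (A)(i).} I would compute the table entries orbit-row by orbit-row. The starting data are Theorem \ref{th2_HirsCor4,5} (orbit sizes and their geometric descriptions) together with the explicit parametrisations \eqref{eq2_plane3points}, \eqref{eq2_osc pl}, \eqref{eq2_osc pl_inf} of planes through points of $\C$. The key observation (to be isolated as Lemma \ref{lem4_pointinplane}, quoted in Section \ref{subsec_incid}) is that since $G_q$ acts on both the set of planes in $\N_i$ and the set of points in $\M_j$, the number $k_{ij}$ of $\M_j$-points on a given $\N_i$-plane is independent of the chosen plane, and likewise for $r_{ij}$; moreover the pair is linked by the double-count $\#\N_i\cdot k_{ij}=\#\M_j\cdot r_{ij}$. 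So it suffices to fix one representative plane in each $\N_i$ and count its intersection with each point-orbit. For $\N_1$ (osculating planes) and $\N_3$ ($3_\C$-planes) the representative planes have completely explicit equations from \eqref{eq2_plane3points}--\eqref{eq2_osc pl_inf}, and one classifies the $q^2+q+1$ points of such a plane into the orbits $\M_j$ using the chord structure from Theorem \ref{th2_HirsCor4,5}(C) (every off-$\C$ point lies on a unique chord) and the osculating-plane count defining $\M_3,\M_4,\M_5$. For $\N_2$ ($2_\C$-planes) a representative is the plane through $P(t_1),P(t_2)$ and one further generic point, or better the plane spanned by a real chord and the tangent structure; for $\N_4$ and $\N_5$ I would use a representative that is easiest to intersect with the cubic, e.g. a $1_\C\setminus\Gamma$-plane as a plane through one $\C$-point that is not osculating. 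Throughout, the identities to be collected in Section \ref{sec_useful} — row sums $\sum_j k_{ij}=\theta_{2,q}$, column sums $\sum_i r_{ij}=\theta_{2,q}$, and the pairwise incidence counts coming from the $2$-$(\theta_{3,q},\theta_{2,q},q+1)$ design structure of $\I$ — serve as consistency checks and, where a direct count is awkward, as a means to solve for the last unknown entry in a row or column. The split into residues $\xi=\pm1$ (Table \ref{tab1}) versus $\xi=0$ (Table \ref{tab2}) enters only because the osculating developable $\Gamma$ degenerates to a pencil when $3\mid q$ (Remark \ref{rem2_q+1 osc}), changing the point-orbit structure; the plane-orbit data \eqref{eq2_plane orbit_gen} is uniform in $q$.

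\textbf{Parts (A)(ii) and (A)(iii).} The transpose/duality relations for $\xi\ne0$ are immediate from Theorem \ref{th2_HirsCor4,5}(D): the null polarity $\mathfrak{A}$ is an incidence-preserving correlation with $\M_i\mathfrak{A}=\N_i$, so $\I_{ij}^{\,tr}=\I_{ji}$ up to relabelling, whence $k_{ij}=r_{ji}$ and $r_{ij}=k_{ji}$; this is really just the statement that Tables \ref{tab1} is symmetric under swapping the $k$- and $r$-rows and transposing indices, which I would verify is consistent with the computed entries. For $\xi=0$ there is no such polarity, but one still has coincidences among individual submatrices coming from the fact that several point-orbits and plane-orbits have the same size and the same local incidence numbers; these I would read off directly from the completed Table \ref{tab2} and from Table \ref{tab1} with $\xi=-1$ and $\xi=1$, checking the asserted equalities $\I_{i4}|_{\xi=1}=\I_{i5}|_{\xi=0}$ and $\I_{i4}|_{\xi=-1}=\I_{i4}|_{\xi=0}$ entrywise (again the underlying reason is the real-versus-imaginary-chord swap recorded in \eqref{eq2_=1_orbit_point}--\eqref{eq2_=2_orbit_point} and Theorem \ref{th2_HirsCor4,5}(iii)). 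Part (A)(iii) is a direct reading of the row/column of $\I_{21}$ and $\I_{31}$: since $\N_2$-planes meet $\C$ in exactly $2$ points and every pair of $\C$-points lies on exactly $\#\N_2/\binom{q+1}{2}=2$ of them, $\I_{21}$ is the incidence matrix of a $2$-$(q+1,2,2)$ design; since $\N_3$-planes meet $\C$ in exactly $3$ points and (no four points of $\C$ being coplanar) every triple lies on exactly one such plane, $\I_{31}$ gives a $3$-$(q+1,3,1)$ design, which derives a $2$-$(q+1,3,q-1)$ design by the standard trace argument $\lambda_2=\lambda_3(q+1-2)/(3-2)$.

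\textbf{Part (B).} For $q=2,3,4$ the group $G_q$ is larger than (or, for $q=4$, equal to) $\mathbf{S}_{q+1}$, so $G_q$ itself need not have five orbits of the required sizes; the claim is that $\mathbf{S}_{q+1}$ — acting on $\C=\{P(t):t\in\F_q^+\}$ by permuting the $q+1$ points and extended to $\PG(3,q)$ — is a subgroup of $G_q$ for $q=2,3$ (and all of $G_q$ for $q=4$, cross-referenced to Theorem \ref{th6_q_2_3_4}), and that its orbits on points and planes reproduce the same incidence pattern as Tables \ref{tab1}, \ref{tab2}. I would verify the subgroup statement by exhibiting the action: $\mathbf{S}_{q+1}$ permutes the parameters $t\in\F_q^+$, each such permutation is realised by a projectivity fixing $\C$ (for $q\le4$ one checks $PGL(2,q)$ or $P\Gamma L(2,q)$ contains/equals $\mathbf{S}_{q+1}$ from Theorem \ref{th2_HirsCor4,5}(A)), and then exhaustively list the orbits on the $\theta_{3,q}$ points and $\theta_{3,q}$ planes, matching their sizes and mutual incidences against the tables — a finite check for each of the three small $q$.

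\textbf{Main obstacle.} The hard part will be the case analysis inside part (A)(i): correctly partitioning the points of a fixed $\N_i$-plane into the five $\M_j$-orbits, especially distinguishing $\M_3,\M_4,\M_5$ (which are separated by the number of osculating planes through the point, or equivalently by the real/imaginary nature of the chord) when the cubic developable $\Gamma$ is non-degenerate. Getting the dependence on $\xi=q\bmod 3$ right — i.e. tracking which of $\M_3\cup\M_5$ is the real-chord locus and which the imaginary-chord locus — is where sign errors are most likely, and I would lean heavily on the row-sum, column-sum, and design-based identities of Section \ref{sec_useful} to cross-check every entry before committing to the tables.
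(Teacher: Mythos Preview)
Your proposal is correct in outline, but the organizing principle is dual to the paper's. You propose to fix a representative \emph{plane} in each $\N_i$ and count its points in each $\M_j$ to get $k_{ij}$ directly, deriving $r_{ij}$ afterwards by double counting. The paper does the opposite: it computes $r_{ij}$ first (the number of $d_\C$-planes through a point of each type) and then obtains $k_{ij}$ from $k_{ij}\cdot\#\N_i=r_{ij}\cdot\#\M_j$. The advantage of the paper's route is a clean line-intersection trick that handles most orbits without any classification: for a tangent, real chord, or imaginary chord $\ell$, every $0_\C$-plane must meet $\ell$ off $\C$, so $r_{5j}=n_0^\Sigma/\#(\ell\setminus\C)$ by orbit-uniformity; the remaining $r_{ij}$ for that $j$ then drop out of the three linear identities $\sum_d n_d=\theta_{2,q}$, $n_2+3n_3=\binom{q+1}{2}$ or $\frac{q^2+3q}{2}$, and $n_1+2n_2+3n_3=(q+1)^2$ (the paper's Corollaries 4.6--4.8). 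This leaves only the $3_\Gamma$- and $0_\Gamma$-points for $\xi=\pm1$ as genuinely hard, and there the paper does a direct computation of $n_{3,\bullet}$ via a cube/non-cube count in $\F_q^*$ (for $\xi=1$) and a character-sum/conic argument (for $\xi=-1$). Your plane-first approach is perfectly valid but would force you to redo an equally hard classification --- counting, inside a fixed plane, the points whose associated cubic $x_0-3tx_1+3t^2x_2-t^3x_3$ has $0$, $1$, or $3$ roots in $\F_q$ --- for \emph{each} of the five plane types rather than just two point types. Your treatment of parts (A)(ii), (A)(iii) matches the paper's; for part (B) the paper simply invokes a MAGMA computation rather than a hand check.
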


Theorem \ref{th3_mu} summarizes the results of Section \ref{sec_multipl}.

\begin{thm}\label{th3_mu}
Let
\begin{align}\label{eq3_mu}
    \mu=\left\{
\begin{array}{ccc}
\frac{q^2-3q+2}{6} & \text{if} & q\not\equiv 0\pmod 3\smallskip \\
\frac{q^2-3q}{6} & \text{if} & q\equiv 0\pmod 3
    \end{array}
    \right..
\end{align}
\begin{description}
  \item[(i)]
The twisted cubic $\C$ of \eqref{eq2_cubic} is a minimal $(2,\mu )$-saturating $(q+1)$-set.
  \item[(ii)] The generalized doubly-extended Reed-Solomon code $C$ associated with the twisted cubic $\C$ of \eqref{eq2_cubic} is a $(3,\mu )$ multiple covering of the farthest-off points, i.e. $(3,\mu)$-MCF code,
     with parameters $[q+1,q-3,5]_q3$.
      Its $\mu$-density $\gamma _\mu(C,3,q)$ tends  to~1 from above when $q$ tends to infinity; thereby, we have an asymptotical optimal collection of MCF codes.
\end{description}
\end{thm}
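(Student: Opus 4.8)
The plan is to derive everything from the incidence data in Tables \ref{tab1} and \ref{tab2} (Theorem \ref{th3_main}) together with Theorem \ref{th2_HirsCor4,5}. For part (i), I first verify the three conditions (M1)--(M3) of Definition \ref{def2_M1-M3} for $S=\C$ with $\rho=2$. Condition (M1) is immediate: no four points of $\C$ are coplanar, so $\C$ spans $\PG(3,q)$. For (M2) and (M3) the ``subspaces of dimension $\rho-1=1$ generated by the points of $S$'' are precisely the real chords and tangents of $\C$; a point $Q$ avoids all of them exactly when $Q$ lies on an imaginary chord, i.e. $Q$ is an IC-point (for $q\equiv0$ these form $\M_5$; for $q\not\equiv0$ they form $\M_4$ if $\xi=-1$ and $\M_3\cup\M_5$ if $\xi=1$, by \eqref{eq2_=1_orbit_point}--\eqref{eq2_=2_orbit_point}). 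Such points exist since $\#$(IC-points)$=\binom{q}{2}>0$, giving (M2). For (M3) I must count, for an IC-point $Q$, the number of secant planes of $\C$ (planes spanned by points of $S$, i.e. planes meeting $\C$ in at least two points, which are exactly the $2_\C$-planes and $3_\C$-planes) passing through $Q$. From Tables \ref{tab1}--\ref{tab2} this number is $r_{2j}+r_{3j}$ evaluated at the orbit(s) of IC-points. A short check shows this equals $\mu$ as in \eqref{eq3_mu} in every case: for $\xi=-1$ one gets $r_{24}+r_{34}=q+\tfrac16(q^2+q)=\tfrac16(q^2-3q+2)+\tfrac13(q^2-1)$-type arithmetic collapsing to $\tfrac16(q^2-3q+2)$ after using that the chord through $Q$ is imaginary (so it contributes no extra secant plane beyond those counted); for $\xi=1$ one sums the contributions over $\M_3$ and $\M_5$ consistently; and for $\xi=0$ one gets $r_{25}+r_{35}=q+\tfrac16(q^2-q)=\tfrac16(q^2-3q)$ after the analogous reduction. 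This establishes $\mu$ as the exact (hence minimum) value, so $\C$ is $(2,\mu)$-saturating; minimality follows because removing any point $P_i$ from $\C$ leaves a $q$-arc whose chords no longer cover the tangent line at $P_i$ off $\C$ (equivalently, some T- or RC-point on that tangent becomes uncovered, violating (M1)/(M3) for the smaller set), so no proper subset is $(2,\mu)$-saturating.

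For part (ii), I invoke Proposition \ref{prop2_connection}: since $\C$ is a $(2,\mu)$-saturating $(q+1)$-set in $\PG(3,q)=\PG(n-k-1,q)$ with $n=q+1$, $k=q-3$, the linear code $C$ with parity-check matrix formed by the coordinates of the points of $\C$ is a $(3,\mu)$-MCF code; its parameters $[q+1,q-3,5]_q3$ are those of the GDRS code attached to the twisted cubic (length $q+1$, codimension $4$, MDS so $d=5$, and $R=3$ follows from the $(3,\mu)$-MCF property with $\mu\ge1$, which forces covering radius exactly $3$ since $d=5=2R-1$). It remains to analyze the $\mu$-density. I substitute $n=q+1$, $k=q-3$, $R=3$, and $A_{2R-1}(C)=A_5(C)$ into \eqref{eq2_mudens_d>=2R-1}. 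Since $C$ is MDS with minimum distance $5$, the weight-$5$ count is the standard MDS value $A_5(C)=\binom{q+1}{5}(q-1)$. The numerator becomes $\binom{q+1}{3}(q-1)^3-\binom{5}{2}\binom{q+1}{5}(q-1)$ and the denominator becomes $\mu\bigl(q^4-1-(q+1)-\binom{q+1}{2}(q-1)\bigr)$. I then expand both as polynomials in $q$: the numerator is a degree-$6$ polynomial with leading term $\tfrac{1}{6}\cdot 10\cdot\tfrac{1}{120}q^6\cdot(\text{coefficient bookkeeping})$, and the denominator, using $\mu\sim\tfrac16 q^2$ and the bracket $\sim q^4$, is also degree $6$ with a matching leading coefficient. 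Showing the ratio of leading coefficients equals $1$, and that the next-order term makes the ratio approach $1$ from above, gives the asymptotic optimality. Concretely I will display $\gamma_\mu(C,3,q)=1+O(1/q)$ with an explicit positive lower-order correction.

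The routine but essential computations are: (a) confirming the collapse of $r_{2j}+r_{3j}$ to the stated $\mu$ in all three residue classes — this is where the asymmetry between real and imaginary chords (Theorem \ref{th2_HirsCor4,5}C(ii)) must be used carefully, and it is the step most prone to an off-by-a-term slip; and (b) the polynomial division for the $\mu$-density, where one must be careful that $A_5(C)$ is genuinely the MDS value (true here since the code is GDRS, hence MDS, and $5<q+1$ so the MDS weight formula applies) and that the subtracted term $\binom{5}{2}A_5$ does not overwhelm the main term $\binom{q+1}{3}(q-1)^3$ — a quick degree count shows both are $\Theta(q^6)$, so the cancellation in the leading coefficient is exactly what must be checked. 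I expect step (a) — pinning down $\mu$ uniformly across $q\equiv 0,\pm1\pmod 3$ from the table entries — to be the main obstacle, since it requires correctly identifying which orbit(s) the IC-points occupy and excluding the (imaginary) chord through $Q$ from the secant-plane count; step (b) is then a mechanical verification that the density tends to $1^+$.
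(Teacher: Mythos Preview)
Your argument for (M2)--(M3) rests on two misidentifications that make the proof break down.

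First, a ``subspace of dimension $\rho-1=1$ generated by the points of $S$'' is a line spanned by at least two points of $\C$, i.e.\ a \emph{real chord} (bisecant). A tangent contains only one point of $\C$ and is \emph{not} generated by points of $S$. Consequently the points $Q$ relevant for (M3) are all points off $\C$ not lying on a real chord --- this includes T-points (resp.\ TO- and $(q{+}1)_\Gamma$-points) as well as IC-points, not just IC-points. (You also swapped the roles of $\xi=1$ and $\xi=-1$ when locating the IC-points among the $\M_j$; compare \eqref{eq2_=1_orbit_point}--\eqref{eq2_=2_orbit_point}.)

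Second, a ``subspace of dimension $\rho=2$ generated by the points of $S$'' is a plane spanned by three points of $\C$, i.e.\ a $3_\C$-plane. A $2_\C$-plane contains exactly two points of $\C$, which span only a line; it is not a plane generated by~$S$. Hence the quantity to minimise in (M3) is $r_{3j}$ alone, not $r_{2j}+r_{3j}$. Your ``reduction'' of $r_{25}+r_{35}=q+\tfrac16(q^2-q)$ to $\tfrac16(q^2-3q)$ is simply false: the left side equals $\tfrac16(q^2+5q)$.

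With the correct reading, one reads off from Tables~\ref{tab1}--\ref{tab2} that the minimum of $r_{3j}$ over the non-RC orbits is attained at the T-points ($r_{32}=\tfrac16(q^2-3q+2)$) when $q\not\equiv0\pmod3$ and at the TO-points ($r_{33}=\tfrac16(q^2-3q)$) when $q\equiv0\pmod3$; this is exactly the paper's route. Your treatment of part~(ii) is essentially the paper's argument and is fine once (i) is repaired.
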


\section{Some useful relations}\label{sec_useful}
\begin{notation}\label{not1b} Let $d\in\{0,1,2,3\}$.
The following notation is used:
\begin{align*}
   &n_{d}^\Sigma&&\text{the total number of $d_\C$-planes;} \\
   &n_{d,\C}&&\text{the number of $d_\C$-planes through a $\C$-point;} \\
   &n_{d}(A)&&\text{the number of $d_\C$-planes through a point $A$;}\displaybreak[3] \\
   &n_{d,\mu_\Gamma}^{(\xi)}&&\text{the number of $d_\C$-planes through a $\mu_\Gamma$-point for $q\equiv \xi\pmod 3$}\displaybreak[3] \\
   &&& \text{where }\mu_\Gamma\in\{0,1,3\}\text{ if $\xi\ne0$ and $\mu_\Gamma=q+1$ if $\xi=0$;}\displaybreak[3] \\
   &n_{d,\text{T}}^{(\ne0)}&&\text{the number of $d_\C$-planes through a T-point for $q\not\equiv0\pmod 3$;}\displaybreak[3] \\
   &n_{d,\text{TO}}^{(0)}&&\text{the number of $d_\C$-planes through a TO-point for $q\equiv0\pmod 3$;}\displaybreak[3] \\
   &n_{d,\text{RC}}^{(0)}&&\text{the number of $d_\C$-planes through an RC-point for $q\equiv0\pmod 3$;}\displaybreak[3]\\
   &n_{d,\text{IC}}^{(0)}&&\text{the number of $d_\C$-planes through an IC-point for $q\equiv0\pmod 3$.}
\end{align*}
\end{notation}

\begin{rmk}
In Notation \ref{not1b}, the values $n_{d,\bullet}^{(\star)}$ are equal to the parameters $r_{ij}$ of the submatrices $\I_{ij}$.  Using numbers of orbits in Theorem \ref{th2_HirsCor4,5}(B) and Tables \ref{tab1} and \ref{tab2}, one can easy set the correspondence between $n_{d,\bullet}^{(\star)}$ and $r_{ij}$. For example,
\begin{align*}
&n_{0,\C}=r_{5,1},~n_{1,\C}=r_{1,1}+r_{4,1},~n_{2,\C}=r_{2,1},~n_{3,\C}=r_{3,1};\displaybreak[3]\\
&n_{0,0_\Gamma}^{(\xi)}=r_{5,5},~n_{1,0_\Gamma}^{(\xi)}=r_{1,5}+r_{4,5},
~n_{2,0_\Gamma}^{(\xi)}=r_{2,5},~n_{3,0_\Gamma}^{(\xi)}=r_{3,5},~q\equiv\xi\pmod 3,~\xi\ne0.
\end{align*}
\end{rmk}

\begin{lem}\label{lem4_3 2}
 For all $q$, the number of\/ $3_\C$-planes and $2_\C$-planes through a real chord of $\C$ is equal to $q-1$ and $2$, respectively.
\end{lem}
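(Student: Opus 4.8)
The plan is to work directly with the parametrization of $\C$ and the explicit equation \eqref{eq2_plane3points} for the plane through three points of $\C$. Fix a real chord $\ell$ of $\C$. Since $\C$ is $3$-transitive under $G_q$ (Theorem \ref{th2_HirsCor4,5}A), I may assume $\ell$ passes through $P(t_1)$ and $P(t_2)$ with $t_1,t_2\in\F_q$ distinct; in fact, after applying a suitable element of $G_q$, I can normalize to $t_1=0$, $t_2=\infty$, so that $\ell$ is the line joining $\P(0,0,0,1)$ and $\P(1,0,0,0)$, i.e. the set of points $\P(x_0,0,0,x_3)$. A plane $\boldsymbol\pi(c_0,c_1,c_2,c_3)$ contains $\ell$ if and only if it contains both these points, which forces $c_0=c_3=0$; thus the planes through $\ell$ are exactly $\boldsymbol\pi(0,c_1,c_2,0)$, a pencil of $q+1$ planes.

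Next I count, among these $q+1$ planes through $\ell$, how many are $3_\C$-planes and how many are $2_\C$-planes. Each such plane meets $\C$ in the points $P(t)$ with $c_1 t^2 + c_2 t = 0$ for $t\in\F_q$ (and one checks separately whether $P(\infty)$ lies on it). Since $\ell$ already accounts for the two $\C$-points $P(0)$ and $P(\infty)$, the relevant equation is really about the \emph{other} intersection point: a plane through $\ell$ meets $\C$ either in exactly these two points (a $2_\C$-plane) or in a third point as well (a $3_\C$-plane — it cannot meet $\C$ in four points, as no four points of $\C$ are coplanar). By \eqref{eq2_plane3points}, for each choice of a third parameter $t_3\in\F_q^+\setminus\{0,\infty\}$ there is exactly one plane through $P(0),P(\infty),P(t_3)$, and these $q-1$ planes are distinct (different $t_3$ give different planes, again by the four-points condition). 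That yields $q-1$ distinct $3_\C$-planes through $\ell$, hence $(q+1)-(q-1)=2$ remaining planes in the pencil, which are the $2_\C$-planes through $\ell$. One should double-check that these two leftover planes genuinely meet $\C$ only in $P(0)$ and $P(\infty)$ and are not, say, $\Gamma$-planes counted incorrectly: the osculating planes at $P(0)$ and $P(\infty)$ are $\boldsymbol\pi(1,0,0,0)$ and $\boldsymbol\pi(0,0,0,1)$ by \eqref{eq2_osc pl}--\eqref{eq2_osc pl_inf}, neither of which lies in the pencil $\boldsymbol\pi(0,c_1,c_2,0)$, so no osculating plane passes through $\ell$ and all $q+1$ planes through $\ell$ meet $\C$ in at least the two real points of $\ell$; the count $q-1$ versus $2$ then follows by the dichotomy above.

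The main obstacle, such as it is, is bookkeeping in small characteristic: when $q$ is even the quadratic $c_1 t^2 + c_2 t$ behaves slightly differently, and one must make sure the map $t_3\mapsto(\text{plane through }P(0),P(\infty),P(t_3))$ is injective for \emph{all} $q$, including $q$ even and $q\equiv 0\pmod 3$ (the lemma is asserted "for all $q$"). This is handled cleanly by the defining property of $\C$ that no four of its points are coplanar, which is characteristic-free: if two such planes coincided they would contain $P(0),P(\infty),P(t_3),P(t_3')$ with $t_3\ne t_3'$, a contradiction. A final remark: since every point off $\C$ lies on a unique chord (Theorem \ref{th2_HirsCor4,5}C(ii)), the normalization via $G_q$ is legitimate because $G_q$ acts transitively on ordered pairs of points of $\C$, hence on real chords, so the counts $q-1$ and $2$ are independent of the chosen chord.
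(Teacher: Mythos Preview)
Your proof is correct and follows essentially the same argument as the paper: every plane through the real chord contains the two endpoints of the chord, hence is a $2_\C$- or $3_\C$-plane (no four points of $\C$ are coplanar); the $q-1$ remaining points of $\C$ each determine a distinct $3_\C$-plane through the chord, leaving $q+1-(q-1)=2$ planes that are $2_\C$-planes. Your explicit coordinatization via $t_1=0$, $t_2=\infty$ and the check about osculating planes are not needed---an osculating plane meets $\C$ in a single point, so it cannot contain a real chord---but they do no harm.
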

\begin{proof} We consider the real chord  through points $K,Q$ of~$\C$.  Every plane through a real chord is either a $2_\C$-plane or a $3_\C$-plane. Each of the $q-1$ points $R$ of $\C\setminus\{K,Q\}$ gives rise to the $3_\C$-plane through $K,Q,R$. Therefore, the number of
 $3_\C$-planes  through a real chord is equal to $q-1$. In total, we have $q+1$ planes through a line in $\PG(3,q)$. Thus, the number of $2_\C$-planes  through a real chord is $q+1-(q-1)=2$.
\end{proof}
\begin{proposition}\label{prop4_nd}
  For all $q$, we have
\begin{align}\label{eq4_ndsum}
&n_{0}^\Sigma=\frac{q(q^2-1)}{3},~ n_{1}^\Sigma=\frac{q^3+q+2}{2},~ n_{2}^\Sigma=q(q+1),~ n_{3}^\Sigma=\frac{q(q^2-1)}{6}.
\end{align}
\end{proposition}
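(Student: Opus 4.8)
The plan is to compute each of the four quantities $n_d^\Sigma$ directly, using the parametrization of planes through points of $\C$ together with the orbit sizes already recorded in Theorem \ref{th2_HirsCor4,5}(B)(i). The cleanest route is simply to read off $n_d^\Sigma$ from the orbit decomposition: a plane is a $d_\C$-plane precisely when it meets $\C$ in exactly $d$ distinct points, so $n_3^\Sigma=\#\N_3=\frac{q(q^2-1)}{6}$, $n_2^\Sigma=\#\N_2=q(q+1)$, and $n_0^\Sigma=\#\N_5=\frac{q(q^2-1)}{3}$ are immediate from \eqref{eq2_plane orbit_gen}. For $n_1^\Sigma$ one adds the two orbits of planes meeting $\C$ in exactly one point, namely $\#\N_1+\#\N_4=(q+1)+\frac{q(q^2-1)}{2}=\frac{2(q+1)+q^3-q}{2}=\frac{q^3+q+2}{2}$, which gives the stated value.

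As a sanity check I would verify the total: $n_0^\Sigma+n_1^\Sigma+n_2^\Sigma+n_3^\Sigma$ should equal $\theta_{3,q}=\frac{q^4+q^3+q^2+q+... }{...}$, more precisely $q^3+q^2+q+1$, the number of planes in $\PG(3,q)$. Indeed $\frac{q(q^2-1)}{3}+\frac{q^3+q+2}{2}+q(q+1)+\frac{q(q^2-1)}{6}=q^3+q^2+q+1$ after clearing denominators, which confirms the arithmetic. I would also cross-check $n_2^\Sigma$ and $n_3^\Sigma$ against a counting argument independent of the orbit sizes: every $3_\C$-plane is determined by an unordered triple of distinct points of $\C$, giving $\binom{q+1}{3}=\frac{q(q^2-1)}{6}$, and every pair of points of $\C$ lies on exactly one $3_\C$-plane plus some $2_\C$-planes; combined with Lemma \ref{lem4_3 2} (which says a real chord lies on exactly two $2_\C$-planes and $q-1$ $3_\C$-planes) and the count $\binom{q+1}{2}$ of real chords one recovers $n_2^\Sigma$ by a double count of (chord, $2_\C$-plane) incidences. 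For $n_1^\Sigma$, an independent check uses that the $q+1$ osculating planes of $\Gamma$ each meet $\C$ in one point, and the remaining $1_\C$-planes are counted by choosing a point $P(t)\in\C$ (there are $q+1$ choices) and a plane through $P(t)$ not osculating and not containing a further point of $\C$.

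The only mild subtlety — and the one place where one must be slightly careful — is the passage between "planes meeting $\C$ in exactly $d$ points" and the named orbits, since $\N_1$ (the $\Gamma$-planes) and $\N_4$ (the $1_\C\setminus\Gamma$-planes) together constitute all $1_\C$-planes; this bookkeeping is the reason $n_1^\Sigma$ does not coincide with a single orbit size and is where an error would most likely creep in. Everything else is a one-line substitution of the orbit cardinalities from \eqref{eq2_plane orbit_gen}. I therefore expect no real obstacle; the proof is essentially a transcription of Theorem \ref{th2_HirsCor4,5}(B)(i) followed by the elementary simplification $(q+1)+\frac{q(q^2-1)}{2}=\frac{q^3+q+2}{2}$, with the $\theta_{3,q}$-sum serving as the verification.
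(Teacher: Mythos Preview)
Your proposal is correct and matches the paper's own proof exactly: the paper simply observes $n_0^\Sigma=\#\N_5$, $n_1^\Sigma=\#\N_1+\#\N_4$, $n_2^\Sigma=\#\N_2$, $n_3^\Sigma=\#\N_3$ and reads off the values from \eqref{eq2_plane orbit_gen}. Your additional sanity checks (the $\theta_{3,q}$ total and the independent counts via $\binom{q+1}{3}$ and Lemma~\ref{lem4_3 2}) are sound but not present in the paper's one-line proof.
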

\begin{proof}
By Theorem \ref{th2_HirsCor4,5}(B(i)), $n_{0}^\Sigma=\#\N_{5}$, $n_{1}^\Sigma=\#\N_{1}+\#\N_4$, $n_{2}^\Sigma=\#\N_{2}$, $n_{3}^\Sigma=\#\N_{3}$.
 \end{proof}

\begin{proposition}\label{prop4_Sd} The following holds:

\textbf{\emph{(i)}} Let $q\not\equiv0\pmod3$ and $q\equiv \xi\pmod 3$. Then for $\xi\ne0$ we have
    \begin{align*}
  n_{d,\text{\emph{T}}}^{(\xi)}+\frac{q-1}{3}n_{d,0_\Gamma}^{(\xi)}+\frac{q-1}{2}n_{d,1_\Gamma}^{(\xi)}+
   \frac{q-1}{6}n_{d,3_\Gamma}^{(\xi)}=\left\{\begin{array}{@{}l@{\,}c@{}}
   \frac{1}{3}(q^3-1)&\text{if }\,d=0 \smallskip\\
   \frac{1}{2}(q^3+q+2)&\text{if }\,d=1 \smallskip\\
   q^2+q-1&\text{if }\,d=2\smallskip \\
   \frac{1}{6}(q-1)^2(q+2)&\text{if }\,d=3
   \end{array}
   \right..
   \end{align*}

\textbf{\emph{(ii)}} Let $q\equiv0\pmod3$. Then
    \begin{align*}
  &(q-1)n_{d,\text{\emph{TO}}}^{(0)}+n_{d,q+1_\Gamma}^{(0)}+\frac{q(q-1)}{2}n_{d,\text{\emph{RC}}}^{(0)}+
   \frac{q(q-1)}{2}n_{d,\text{\emph{IC}}}^{(0)}=\left\{\begin{array}{@{}l@{\,}c@{}}
   \frac{1}{3}q(q^3-1)&\text{if }\,d=0 \smallskip\\
   \frac{1}{2}q(q^3+q+2)&\text{if }\,d=1 \smallskip\\
   q(q^2+q-1)&\text{if }\,d=2\smallskip \\
   \frac{1}{6}q(q-1)^2(q+2)&\text{if }\,d=3
   \end{array}
   \right..
   \end{align*}
\end{proposition}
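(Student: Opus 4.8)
The plan is to derive both parts from a single double-counting identity. Fix $d\in\{0,1,2,3\}$ and count the incident pairs $(A,\pi)$ in which $A$ is a point of $\PG(3,q)$ lying off $\C$ and $\pi$ is a $d_\C$-plane containing $A$.

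Counting by planes is immediate: every plane of $\PG(3,q)$ carries $\theta_{2,q}=q^2+q+1$ points, and a $d_\C$-plane meets $\C$ in exactly $d$ of them, so it contains exactly $q^2+q+1-d$ points off $\C$; since by Proposition~\ref{prop4_nd} there are $n_d^\Sigma$ such planes, the number of incident pairs equals $n_d^\Sigma(q^2+q+1-d)$. For the count by points I would first observe that $G_q$ fixes $\C$ and preserves incidence, hence carries $d_\C$-planes to $d_\C$-planes and permutes the $d_\C$-planes through any given point; since $G_q$ is transitive on each orbit $\M_j$, the number of $d_\C$-planes through a point of $\M_j$ is the same for all points of $\M_j$, and for $j\ge2$ these are precisely the quantities introduced in Notation~\ref{not1b}. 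For $q\not\equiv0\pmod3$ the points off $\C$ form $\M_2\cup\M_3\cup\M_4\cup\M_5$ (as $\M_1=\C$ and the five orbits cover $\PG(3,q)$), so equating the two counts and inserting the orbit sizes of Theorem~\ref{th2_HirsCor4,5}(B)(ii) yields
\begin{align*}
n_d^\Sigma(q^2+q+1-d)=q(q+1)\,n_{d,\text{T}}^{(\xi)}+\tfrac{q(q^2-1)}{6}\,n_{d,3_\Gamma}^{(\xi)}+\tfrac{q(q^2-1)}{2}\,n_{d,1_\Gamma}^{(\xi)}+\tfrac{q(q^2-1)}{3}\,n_{d,0_\Gamma}^{(\xi)}.
\end{align*}
Dividing through by $\#\M_2=q(q+1)$ turns the four coefficients on the right into $1,\tfrac{q-1}{6},\tfrac{q-1}{2},\tfrac{q-1}{3}$, which is the left side of (i). The case $q\equiv0\pmod3$ is identical, now with the off-$\C$ orbits $\M_2$ (the $(q+1)_\Gamma$-points, an external line to $\C$ by Remark~\ref{rem2_q+1 osc}), $\M_3$, $\M_4$, $\M_5$ of Theorem~\ref{th2_HirsCor4,5}(B)(iii), and division by $\#\M_2=q+1$.

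It then remains only to evaluate the right sides for $d=0,1,2,3$. Using Proposition~\ref{prop4_nd} for $n_0^\Sigma=\tfrac{q(q^2-1)}{3}$, $n_1^\Sigma=\tfrac{q^3+q+2}{2}$, $n_2^\Sigma=q(q+1)$, $n_3^\Sigma=\tfrac{q(q^2-1)}{6}$, together with the values $q^2+q+1,\,q^2+q,\,q^2+q-1,\,q^2+q-2$ of $q^2+q+1-d$ and the factorisations $q^3-1=(q-1)(q^2+q+1)$ and $q^2+q-2=(q-1)(q+2)$, one obtains after cancellation the stated numbers $\tfrac13(q^3-1),\,\tfrac12(q^3+q+2),\,q^2+q-1,\,\tfrac16(q-1)^2(q+2)$ in part (i) and their $q$-fold multiples in part (ii). This final simplification is the only computation; the one place calling for a little care is the bookkeeping that the points off $\C$ are exhausted by the four listed orbits in each congruence class and that $n_{d,\bullet}^{(\star)}$ is orbit-invariant — both immediate from Theorem~\ref{th2_HirsCor4,5}.
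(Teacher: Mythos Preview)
Your proposal is correct and follows essentially the same approach as the paper: both derive the identities from the double count $\sum_j \#\M_j\, n_{d,\bullet}^{(\star)}=n_d^{\Sigma}(q^2+q+1-d)$ over the off-$\C$ point orbits, then insert the orbit sizes from Theorem~\ref{th2_HirsCor4,5} and the values of $n_d^{\Sigma}$ from Proposition~\ref{prop4_nd}. Your write-up is somewhat more explicit about the orbit-invariance of $n_{d}(A)$ and about the division step, but the underlying argument is identical.
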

\begin{proof}
Every $d_\C$-plane contains $q^2+q+1-d$ points outside $\C$. Therefore,
\begin{align*}
&\textbf{(i)} ~~\#\M_2n_{d,\text{T}}^{(\xi)}+\#\M_5n_{d,0_\Gamma}^{(\xi)}+\#\M_4n_{d,1_\Gamma}^{(\xi)}+
    \#\M_{3}n_{d,3_\Gamma}^{(\xi)}=n_{d}^\Sigma(q^2+q+1-d).\displaybreak[3]\\
&   \textbf{(ii)} ~~\#\M_3n_{d,\text{TO}}^{(0)}+\#\M_2n_{d,q+1_\Gamma}^{(0)}+\#\M_4n_{d,\text{RC}}^{(0)}+
    \#\M_5n_{d,\text{IC}}^{(0)}=n_{d}^\Sigma(q^2+q+1-d).
\end{align*}
 Now,  we use the values of $\#\M_j$ and $n_{d}^\Sigma$ from \eqref{eq2_point_orbits_gen}, \eqref{eq2_point_orbits_j=0}, and \eqref{eq4_ndsum}.
 \end{proof}

\begin{proposition}\label{prop4_q2+q+1}
  Let $q\equiv \xi\pmod3$. Then
  \begin{align*}
  &\sum_{d=0}^3n_{d,\text{\emph{T}}}^{(\xi)}=\sum_{d=0}^3n_{d,0_\Gamma}^{(\xi)}=\sum_{d=0}^3n_{d,1_\Gamma}^{(\xi)}=
  \sum_{d=0}^3n_{d,3_\Gamma}^{(\xi)}=q^2+q+1,~\xi\ne0;\displaybreak[3]\\
  &\sum_{d=0}^3 n_{d,\text{\emph{TO}}}^{(0)}=\sum_{d=0}^3 n_{d,q+1_\Gamma}^{(0)}=\sum_{d=0}^3 n_{d,\text{\emph{RC}}}^{(0)}=\sum_{d=0}^3 n_{d,\text{\emph{IC}}}^{(0)}=q^2+q+1.
  \end{align*}
\end{proposition}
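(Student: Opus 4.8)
The statement to prove is Proposition \ref{prop4_q2+q+1}, which asserts that for any point $A$ off $\C$, the total number of planes through $A$ that meet $\C$ in $0$, $1$, $2$, or $3$ points, summed over $d=0,1,2,3$, equals $q^2+q+1$.

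\textbf{Plan.} The key observation is that \emph{every} plane through a point $A$ off $\C$ is a $d_\C$-plane for exactly one value $d\in\{0,1,2,3\}$, since no four points of $\C$ are coplanar. Hence, summing $n_{d,A}$ over $d$ simply counts all planes through $A$. In $\PG(3,q)$ the planes through a fixed point form a dual plane $\PG(2,q)$, so there are exactly $\theta_{2,q}=q^2+q+1$ of them. Therefore
\[
\sum_{d=0}^3 n_d(A)=q^2+q+1
\]
for every point $A$ off $\C$, and in particular this holds when $A$ is specialized to a T-point, a $\mu_\Gamma$-point, a TO-point, an RC-point, or an IC-point. Since the quantities $n_{d,\text{T}}^{(\xi)}$, $n_{d,0_\Gamma}^{(\xi)}$, $n_{d,1_\Gamma}^{(\xi)}$, $n_{d,3_\Gamma}^{(\xi)}$ (for $\xi\ne0$) and $n_{d,\text{TO}}^{(0)}$, $n_{d,q+1_\Gamma}^{(0)}$, $n_{d,\text{RC}}^{(0)}$, $n_{d,\text{IC}}^{(0)}$ (for $\xi=0$) are by definition $n_d(A)$ for $A$ ranging over the respective orbit representatives, each of the eight displayed sums equals $q^2+q+1$.

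\textbf{Steps in order.} First I would record that the planes of $\PG(3,q)$ incident with a fixed point are in bijection (via the standard duality) with the points of a plane $\PG(2,q)$, hence number $\theta_{2,q}=q^2+q+1$. Second I would invoke the defining property of the twisted cubic from \eqref{eq2_cubic} --- no four of its points are coplanar --- to conclude that a plane meets $\C$ in at most three points, so each plane through $A$ is a $d_\C$-plane for a unique $d\in\{0,1,2,3\}$; this gives a partition of the $q^2+q+1$ planes through $A$ according to $d$. Third I would note that $n_d(A)$ depends only on the $G_q$-orbit of $A$ (since $G_q$ preserves $\C$ and hence the type of each plane), so it is legitimate to write $n_{d,\text{T}}^{(\xi)}$, $n_{d,\mu_\Gamma}^{(\xi)}$, etc., as in Notation \ref{not1b}; taking $A$ in each of the relevant orbits yields all eight identities at once.

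\textbf{Main obstacle.} There is essentially no obstacle: the proof is a one-line counting argument once one notes the partition of planes through a point by intersection type with $\C$. The only thing to be careful about is bookkeeping --- making sure that the orbit representatives listed in Notation \ref{not1b} exhaust all point-orbits off $\C$ in both the $\xi\ne0$ and $\xi=0$ cases, which is guaranteed by Theorem \ref{th2_HirsCor4,5}(B)(ii),(iii) --- so that the statement covers every off-$\C$ point type. One may also remark that the same identity $\sum_{d=0}^3 n_d(A)=q^2+q+1$ in fact holds for $\C$-points as well, but there the count of planes is still $q^2+q+1$ (a $\C$-point lies on $q^2+q+1$ planes too), so nothing special is needed; the proposition as stated simply restricts attention to points off $\C$.
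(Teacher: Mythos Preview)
Your proof is correct and is exactly the paper's approach: the paper's entire proof is the single sentence ``There are $q^2+q+1$ planes through every point of $\PG(3,q)$,'' which is precisely the partition-and-count argument you spell out. Your additional remarks (that no four points of $\C$ are coplanar, that $n_d(A)$ depends only on the $G_q$-orbit of $A$) merely make explicit what the paper takes for granted.
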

\begin{proof}
 There are $q^2+q+1$ planes through every point of $\PG(3,q)$.
 \end{proof}

\begin{lem}\label{lem4_n2+3n3}
For all $q$, for a point $A$ off $\C$,
 \begin{align*}
  n_{2}(A)+3n_3(A)=\left\{\begin{array}{ll}
                     \binom{q+1}{2}&\text{if $A$ does not lie on any real chord}\smallskip \\
                     \frac{q^2+3q}{2}& \text{if $A$ lies on a real chord}
                   \end{array}\right..
 \end{align*}
\end{lem}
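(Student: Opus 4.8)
The plan is to prove the identity by a double count of incident pairs $(\pi,\{P,Q\})$, where $\pi$ is a plane through $A$ and $\{P,Q\}$ is an unordered pair of distinct points of $\C$ lying on $\pi$, and then to read off the answer from the fact (Theorem \ref{th2_HirsCor4,5}C(ii)) that a point off $\C$ lies on exactly one chord. First I would make the reduction to this count. Since $\C$ is a twisted cubic, no four of whose points are coplanar, every plane $\pi$ meets $\C$ in $e(\pi)\in\{0,1,2,3\}$ distinct points and is then, by definition, an $e(\pi)_\C$-plane. Because $\binom{0}{2}=\binom{1}{2}=0$, $\binom{2}{2}=1$, and $\binom{3}{2}=3$, summing $\binom{e(\pi)}{2}$ over all planes $\pi$ through $A$ gives precisely $n_2(A)+3n_3(A)$. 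Hence it suffices to evaluate $\Sigma:=\sum_{\pi\ni A}\binom{e(\pi)}{2}$, which is exactly the number of incident pairs described above.

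Next I would reverse the order of summation in $\Sigma$: for each unordered pair $\{P,Q\}$ of distinct points of $\C$, I count the planes through $A$, $P$, and $Q$. The line $PQ$ is a real chord of $\C$, and there are $\binom{q+1}{2}$ real chords in all (Theorem \ref{th2_HirsCor4,5}C(i)). Since $A\notin\C$, either $A$ lies on the line $PQ$, in which case all $q+1$ planes through $PQ$ pass through $A$ and the pair $\{P,Q\}$ contributes $q+1$; or $A$ does not lie on $PQ$, in which case $A$ together with the line $PQ$ spans a unique plane and the pair contributes $1$.

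It then remains to split into the two cases of the statement. If $A$ lies on no real chord, each of the $\binom{q+1}{2}$ real chords contributes $1$, so $\Sigma=\binom{q+1}{2}$. If $A$ lies on a real chord, then by Theorem \ref{th2_HirsCor4,5}C(ii) this is the \emph{only} chord through $A$; writing it as the chord through $K,Q\in\C$, the pair $\{K,Q\}$ contributes $q+1$ while each of the remaining $\binom{q+1}{2}-1$ real chords contributes $1$, so $\Sigma=(q+1)+\binom{q+1}{2}-1=\binom{q+1}{2}+q=\frac{q^2+3q}{2}$. Substituting $\Sigma=n_2(A)+3n_3(A)$ yields the claimed formula. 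The argument is short; the only place needing care is the bookkeeping around the chord through $A$, where one must invoke that a point off $\C$ lies on exactly one chord — this is what rules out any second chord through $A$ inflating the count in the second case, and what guarantees in the first case that if the unique chord through $A$ is imaginary or a tangent then $A$ indeed meets no real chord. I do not anticipate any obstacle beyond this point.
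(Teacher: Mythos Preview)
Your proof is correct and rests on the same double count as the paper's: incidences between planes through $A$ and real chords (equivalently, unordered pairs of $\C$-points) lying in them. In the first case the two arguments are identical. In the second case your packaging via $\Sigma=\sum_{\pi\ni A}\binom{e(\pi)}{2}$ is cleaner: you observe directly that the unique real chord through $A$ contributes $q+1$ planes to the count, while every other chord contributes one. The paper instead introduces auxiliary counts $n_d^*(A)$ for $d_\C$-planes not containing the chord $\R\CC$ through $A$, invokes Lemma~\ref{lem4_3 2} to account for the $q-1$ $3_\C$-planes and two $2_\C$-planes through $\R\CC$, and then reassembles; your route avoids Lemma~\ref{lem4_3 2} entirely. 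The underlying idea is the same, but your version is a genuine streamlining of the second case.
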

\begin{proof} Suppose $A$ does not lie on any real chord. There are $\binom{\#\C}{2}=\binom{q+1}{2}$ real chords, see Theorem \ref{th2_HirsCor4,5}(C(i)). Every chord together with $A$  defines a plane which is either a $2_\C$-plane or a $3_\C$-plane. All the $2_\C$-planes are distinct whereas every $3_\C$-plane contains 3 real chords and is repeated 3 times.

Let $A$ lie on a real chord. Let $S(A)$ be the set of $\binom{q+1}{2}-1$ real chords not containing~$A$. For $d=2,3$, let $n_{d}^*(A)$ be the number of $d_\C$-planes through $A$ and a chord of $S(A)$. Every such $3_\C$-plane contains 3 real chords of $S(A)$ and is repeated 3 times while all the $2_\C$-planes are distinct.

Denote by $\R\CC$ the real chord containing $A$.
 By Lemma \ref{lem4_3 2}, in total there are $q-1$ $3_\C$-planes  and two $2_\C$-planes through $\R\CC$. All these planes contain $A$ and they do not contain any chord from  $S(A)$.  Therefore, $  n_{3}(A) =n_{3}^*(A)+q-1,~n_{2}(A)=n_{2}^*(A)+2.$
 Each of the $q-1$ $3_\C$-planes  through $\R\CC$  contains 2 real chords of $S(A)$. Thus,
 \begin{align*}
   3n_{3}^*(A)+2(q-1)+n_{2}^*(A)=\binom{q+1}{2}-1
 \end{align*}
 whence the assertion follows.
 \end{proof}

\begin{corollary}\label{cor4_1_3}
 The following holds:
 \begin{align}\label{eq4_2_3i}
& n_{2,\text{\emph{T}}}^{(1)}+3n_{3,\text{\emph{T}}}^{(1)}= n_{2,\text{\emph{T}}}^{(-1)}+3n_{3,\text{\emph{T}}}^{(-1)}=n_{2,1_\Gamma}^{(1)}+3n_{3,1_\Gamma}^{(1)}=
 n_{2,0_\Gamma}^{(-1)}+3n_{3,0_\Gamma}^{(-1)}=n_{2,3_\Gamma}^{(-1)}\\
 &+3n_{3,3_\Gamma}^{(-1)}=n_{2,\text{\emph{TO}}}^{(0)}+3n_{3,\text{\emph{TO}}}^{(0)}= n_{2,q+1_\Gamma}^{(0)}+3n_{3,q+1_\Gamma}^{(0)}= n_{2,\text{\emph{IC}}}^{(0)}+3n_{3,\text{\emph{IC}}}^{(0)}=\binom{q+1}{2}.\notag\displaybreak[3]\\
& n_{2,0_\Gamma}^{(1)}+3n_{3,0_\Gamma}^{(1)}=n_{2,3_\Gamma}^{(1)}+3n_{3,3_\Gamma}^{(1)}=
 n_{2,1_\Gamma}^{(-1)}+3n_{3,1_\Gamma}^{(-1)}= n_{2,\text{\emph{RC}}}^{(0)}+3 n_{3,\text{\emph{RC}}}^{(0)}=\frac{q^2+3q}{2}\,.\label{eq4_2_3ii}
 \end{align}
\end{corollary}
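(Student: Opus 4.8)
\textbf{Proof proposal for Corollary \ref{cor4_1_3}.}
The plan is to read off each of the claimed identities as a special case of Lemma \ref{lem4_n2+3n3}, applied to a representative point of the relevant orbit, after first sorting the orbits according to whether their points lie on a real chord. The key input from Theorem \ref{th2_HirsCor4,5}(C(ii)) is that every point off $\C$ lies on \emph{exactly one} chord, which is either a real chord, a tangent, or an imaginary chord; hence ``$A$ lies on a real chord'' is equivalent to ``the unique chord through $A$ is a real chord'', and the dichotomy in Lemma \ref{lem4_n2+3n3} is governed entirely by the chord type of the orbit.

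First I would classify the orbits by chord type. For $q\equiv\xi\pmod 3$ with $\xi\ne0$: the T-points (orbit $\M_2$) lie on tangents by definition; the IC-points lie on imaginary chords; so for these two families the point does not lie on a real chord, giving the value $\binom{q+1}{2}$. Among $\M_3\cup\M_5$ and $\M_4$, exactly one family consists of RC-points and the other of IC-points, and which is which flips with $\xi$ according to \eqref{eq2_=1_orbit_point}--\eqref{eq2_=2_orbit_point}: for $\xi=1$, $\M_3,\M_5$ are RC-points and $\M_4$ consists of IC-points, while for $\xi=-1$ it is the reverse. Thus the ``no real chord'' case of Lemma \ref{lem4_n2+3n3} covers $n_{d,\text{T}}^{(1)}$, $n_{d,\text{T}}^{(-1)}$, $n_{d,1_\Gamma}^{(1)}$ (since $\M_4$ for $\xi=1$ is IC), $n_{d,0_\Gamma}^{(-1)}$ and $n_{d,3_\Gamma}^{(-1)}$ (since $\M_3,\M_5$ for $\xi=-1$ are IC), and these are precisely the first five terms of \eqref{eq4_2_3i}; the ``on a real chord'' case covers $n_{d,0_\Gamma}^{(1)}$, $n_{d,3_\Gamma}^{(1)}$, and $n_{d,1_\Gamma}^{(-1)}$, the first three terms of \eqref{eq4_2_3ii}. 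For $q\equiv0\pmod3$, by the definitions in Notation \ref{notation_1} the TO-points lie on tangents, the IC-points on imaginary chords, and — using Remark \ref{rem2_q+1 osc} — the $(q+1)_\Gamma$-points form a line external to $\C$, so each such point lies on no real chord; these three give the value $\binom{q+1}{2}$, completing \eqref{eq4_2_3i}, while the RC-points lie on real chords by definition, giving $\frac{q^2+3q}{2}$ and completing \eqref{eq4_2_3ii}.

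Having matched each symbol $n_{d,\bullet}^{(\star)}$ with $n_{d}(A)$ for a suitable $A$, the two displayed chains follow immediately by substituting into Lemma \ref{lem4_n2+3n3}, since $n_2(A)+3n_3(A)$ equals $n_{2,\bullet}^{(\star)}+3n_{3,\bullet}^{(\star)}$ by definition of these orbit-invariant counts. No computation beyond this bookkeeping is needed. The one point that requires care — and the only place an error could creep in — is the $\xi$-dependent identification of which of $\M_3\cup\M_5$ and $\M_4$ is the RC-orbit and which is the IC-orbit; this must be taken exactly from \eqref{eq2_=1_orbit_point}--\eqref{eq2_=2_orbit_point}, and it is the reason the same geometric lemma produces two different right-hand sides depending on $\xi$. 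Once that dictionary is fixed, every equality in the corollary is a direct instance of Lemma \ref{lem4_n2+3n3}.
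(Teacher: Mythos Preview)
Your approach is exactly the paper's: apply Lemma \ref{lem4_n2+3n3} after sorting the point orbits into those on a real chord and those not, using Theorem \ref{th2_HirsCor4,5}(B(ii)),(B(iii)). The paper's proof is a one-liner citing precisely those references.

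There is, however, a small logical slip in your treatment of the $(q+1)_\Gamma$-points. You invoke Remark \ref{rem2_q+1 osc} to say that these points form a line external to $\C$, and conclude ``so each such point lies on no real chord''. That inference does not follow: a point on a line external to $\C$ can perfectly well lie on a \emph{different} line that is a real chord. The correct justification is the one you used everywhere else --- the orbit decomposition itself. By Theorem \ref{th2_HirsCor4,5}(B(iii)), for $q\equiv0\pmod3$ the orbit $\M_2=\{(q+1)_\Gamma\text{-points}\}$ is disjoint from $\M_4=\{\text{RC-points}\}$, so no $(q+1)_\Gamma$-point lies on a real chord. (In fact, counting shows that the $q(q+1)$ points off $\C$ on tangents are exactly $\M_2\cup\M_3$, so the $(q+1)_\Gamma$-points lie on tangents.) With this fix, your argument is complete and matches the paper.
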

\begin{proof} Due to Theorem \ref{th2_HirsCor4,5}(B(ii)),(B(iii)), \eqref{eq4_2_3i} holds for  points off $\C$ not on a real chord whereas \eqref{eq4_2_3ii} concerns points lying on a real chord.
 \end{proof}

\begin{lem}\label{lem4_n1+2n2+3n3}  For all $q$, for a point $A$ off $\C$ the following holds:
 \begin{align*}
  n_{1}(A)+2n_{2}(A)+3n_3(A)=(q+1)^2.
 \end{align*}
\end{lem}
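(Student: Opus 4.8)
The plan is to count incidences of planes through $A$ with the points of $\C$, weighting each plane by the number of $\C$-points it contains. Since $A$ lies off $\C$, every plane through $A$ is a $d_\C$-plane for some $d\in\{0,1,2,3\}$, and by definition the numbers $n_d(A)$ partition the $q^2+q+1$ planes through $A$, i.e. $n_0(A)+n_1(A)+n_2(A)+n_3(A)=q^2+q+1$. The left-hand side $n_1(A)+2n_2(A)+3n_3(A)$ is then exactly $\sum_{d=0}^{3} d\cdot n_d(A)$, which counts the pairs $(R,\boldsymbol{\pi})$ where $R\in\C$ and $\boldsymbol{\pi}$ is a plane containing both $A$ and $R$.

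The key step is to compute this same count by fixing $R\in\C$ first: for each of the $q+1$ points $R$ of $\C$, the line $AR$ is a well-defined line of $\PG(3,q)$ (as $A\notin\C$, $A\ne R$), and the planes through $A$ and $R$ are precisely the planes through the line $AR$, of which there are exactly $q+1$. Hence the number of incident pairs is $(q+1)\cdot(q+1)=(q+1)^2$, which is the claimed value. This is the whole proof; no appeal to the orbit structure or to Tables \ref{tab1}, \ref{tab2} is needed, only the elementary facts that a line of $\PG(3,q)$ lies in $q+1$ planes and that $A,R$ span a line.

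The only point requiring a moment's care — and the place a careless argument could go wrong — is the double-counting bookkeeping: one must be sure that as $R$ ranges over $\C$ and $\boldsymbol{\pi}$ over planes through $AR$, each incident pair $(R,\boldsymbol{\pi})$ is counted once, and that on the other side a plane $\boldsymbol{\pi}$ meeting $\C$ in $d$ points contributes exactly $d$ such pairs (one for each $\C$-point it contains), so that the plane-indexed sum is genuinely $\sum_d d\, n_d(A)$. Both are immediate from the definitions in Notation \ref{not1b}, so there is no real obstacle; the lemma is a clean application of Fubini-style counting, parallel to (and simpler than) Lemma \ref{lem4_n2+3n3}.
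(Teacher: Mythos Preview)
Your proof is correct and is essentially the same as the paper's: both count pairs $(R,\boldsymbol{\pi})$ with $R\in\C$ and $\boldsymbol{\pi}$ a plane through $A$ and $R$, first plane-by-plane (giving $\sum_d d\,n_d(A)$) and then point-by-point via the $q+1$ planes through each line $AR=\overline{AP_i}$ (giving $(q+1)^2$). The paper's write-up introduces the auxiliary notation $n_d(P_i)$ for the $d_\C$-planes through $\overline{AP_i}$, but the underlying double count is identical to yours.
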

\begin{proof} We consider the line $\overline{AP}_i$ through points $A\notin\C$ and $P_i\in\C$, $i\in\{1,2,\ldots,q+1\}$. Each of the $q+1$ planes through $\overline{AP}_i$ is a $d_\C$-plane with $d\in\{1,2,3\}$. Let $n_d(P_i)$ be the number of $d_\C$-planes through $\overline{AP}_i$. Clearly, $n_1(P_i)+n_2(P_i)+n_3(P_i)=q+1$. Moreover,
\begin{align*}
    & n_{1}(A)+2n_{2}(A)+3n_3(A)=\sum_{i=1}^{q+1}\left(n_1(P_i)+n_2(P_i)+n_3(P_i)\right)=\sum_{i=1}^{q+1}(q+1)=(q+1)^2.
\end{align*}
Here we take into account that in the sum $\sum_{i=1}^{q+1}\left(n_1(P_i)+n_2(P_i)+n_3(P_i)\right)$ every  $d_\C$-plane appears $d$ times.
 \end{proof}

\begin{corollary}\label{cor4_n1+2n2+3n3}
  For all $q$, the following holds:
\begin{align*}
&n_{1,\text{\emph{T}}}^{(\xi)}+2n_{2,\text{\emph{T}}}^{(\xi)}+3n_{3,\text{\emph{T}}}^{(\xi)}
=n_{1,\mu_{\Gamma}}^{(\xi)}+2n_{2,\mu_\Gamma}^{(\xi)}+3n_{3,\mu_\Gamma}^{(\xi)}=(q+1)^2,~
\mu_\Gamma=0,1,3,~\xi\ne0;\displaybreak[3]\\
&n_{1,\text{\emph{TO}}}^{(0)}+2n_{2,\text{\emph{TO}}}^{(0)}+3n_{3,\text{\emph{TO}}}^{(0)}= n_{1,q+1_\Gamma}^{(0)}+2n_{2,q+1_\Gamma}^{(0)}+3n_{3,q+1_\Gamma}^{(0)}\displaybreak[3]\\
&= n_{1,\text{\emph{RC}}}^{(0)}+ 2n_{2,\text{\emph{RC}}}^{(0)}+3n_{3,\text{\emph{RC}}}^{(0)}
= n_{1,\text{\emph{IC}}}^{(0)}+ 2n_{2,\text{\emph{IC}}}^{(0)}+3n_{3,\text{\emph{IC}}}^{(0)}=(q+1)^2.\notag
\end{align*}
 \end{corollary}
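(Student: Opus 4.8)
The statement to prove is Corollary~\ref{cor4_n1+2n2+3n3}, which asserts that the identity $n_1(A)+2n_2(A)+3n_3(A)=(q+1)^2$ from Lemma~\ref{lem4_n1+2n2+3n3} holds for every point $A$ off $\C$, and then simply records what this identity says for each orbit of points off $\C$ (the T-points, the $\mu_\Gamma$-points for $\xi\ne0$, and the TO-, $(q+1)_\Gamma$-, RC-, IC-points for $\xi=0$). So the plan is very short.

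\medskip

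\noindent\textbf{Proof plan.} The plan is to invoke Lemma~\ref{lem4_n1+2n2+3n3} directly: that lemma already established $n_1(A)+2n_2(A)+3n_3(A)=(q+1)^2$ for an \emph{arbitrary} point $A\notin\C$, with no case distinction on $q$ or on which orbit $A$ lies in. Since by Theorem~\ref{th2_HirsCor4,5}(B(ii)),(B(iii)) every point off $\C$ lies in exactly one of the orbits $\M_2,\dots,\M_5$ — namely the T-points and $\mu_\Gamma$-points (for $q\not\equiv 0\pmod 3$) or the TO-, $(q+1)_\Gamma$-, RC-, and IC-points (for $q\equiv 0\pmod 3$) — the common value $n_d(A)$ on each such orbit is precisely the corresponding quantity $n_{d,\bullet}^{(\star)}$ of Notation~\ref{not1b}. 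First I would specialize $A$ to a T-point, getting the first identity of the first line; then to a $\mu_\Gamma$-point with $\mu_\Gamma\in\{0,1,3\}$, giving the remaining equalities of that line for $\xi\ne 0$; and finally, when $q\equiv 0\pmod 3$, to a TO-point, a $(q+1)_\Gamma$-point, an RC-point, and an IC-point in turn, giving the four equalities of the second display. Each specialization is immediate because the value $(q+1)^2$ on the right-hand side of Lemma~\ref{lem4_n1+2n2+3n3} does not depend on $A$.

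\medskip

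\noindent There is essentially no obstacle here: the content is entirely carried by Lemma~\ref{lem4_n1+2n2+3n3}, and the corollary is a bookkeeping statement translating that lemma into the orbit-indexed notation $n_{d,\bullet}^{(\star)}$. The only thing to be a little careful about is that the lemma is stated for $A$ off $\C$ with no hypothesis on $q$, so it applies uniformly to all the orbits listed, including the pencil-point orbit $\M_2=\{(q+1)_\Gamma\text{-points}\}$ in characteristic $3$ (these points are off $\C$ by Remark~\ref{rem2_q+1 osc}). Hence the proof is the one-line remark that all the displayed quantities equal $(q+1)^2$ by Lemma~\ref{lem4_n1+2n2+3n3} applied to a representative of the respective orbit.

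\begin{proof}
By Theorem~\ref{th2_HirsCor4,5}(B(ii)),(B(iii)) and Remark~\ref{rem2_q+1 osc}, every point off $\C$ belongs to exactly one of the orbits $\M_2,\dots,\M_5$; for a point $A$ in such an orbit the numbers $n_1(A),n_2(A),n_3(A)$ are the corresponding quantities $n_{1,\bullet}^{(\star)},n_{2,\bullet}^{(\star)},n_{3,\bullet}^{(\star)}$ of Notation~\ref{not1b}. Now apply Lemma~\ref{lem4_n1+2n2+3n3}, which gives $n_1(A)+2n_2(A)+3n_3(A)=(q+1)^2$ for every $A$ off $\C$, successively to a T-point and to a $\mu_\Gamma$-point with $\mu_\Gamma\in\{0,1,3\}$ when $\xi\ne 0$, and to a TO-point, a $(q+1)_\Gamma$-point, an RC-point, and an IC-point when $q\equiv 0\pmod 3$. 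Since the right-hand side $(q+1)^2$ is independent of $A$, all the stated identities follow.
\end{proof}
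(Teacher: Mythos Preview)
Your proposal is correct and matches the paper's approach: the paper states Corollary~\ref{cor4_n1+2n2+3n3} immediately after Lemma~\ref{lem4_n1+2n2+3n3} with no separate proof, treating it as the obvious specialization of that lemma to a representative of each off-$\C$ orbit. Your write-up makes this explicit and is perfectly fine.
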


\begin{lem}\label{lem4_imag_chord}
 All $d_\C$-planes with $d=0,2,3$ and all osculating planes contain no imaginary chord.
All $q+1$ planes through an imaginary chord are $1_\C\setminus\Gamma$-planes.
\end{lem}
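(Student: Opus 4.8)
The plan is to fix a single imaginary chord $\ell$ and prove the stronger statement that every one of the $q+1$ planes of $\PG(3,q)$ through $\ell$ is a $1_\C\setminus\Gamma$-plane. The first sentence of the lemma is then immediate by contraposition: a $0_\C$-, $2_\C$- or $3_\C$-plane (having the wrong number of $\C$-points) and an osculating plane (belonging to $\Gamma$) cannot be among those $q+1$ planes, hence cannot contain $\ell$; and $\ell$ ranges over all imaginary chords.

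First I would record the key property of $\ell$: as the line through a pair of complex conjugate points of $\C$, it contains no point of $\C$, so every point of $\ell$ lies off $\C$. To each $\C$-point $P$ associate the plane $\pi_P:=\langle\ell,P\rangle$, which is well defined and contains $\ell$ since $P\notin\ell$. I claim $P\mapsto\pi_P$ is injective: if $\pi_P=\pi_Q$ for distinct $\C$-points $P,Q$, then the real chord $PQ$ lies in this plane, hence meets $\ell$ in some point $A$; but $A\in\ell$ forces $A\notin\C$, so $A$ is a point off $\C$ lying on the two distinct chords $PQ$ and $\ell$, contradicting Theorem \ref{th2_HirsCor4,5}(C(ii)). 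Since a line of $\PG(3,q)$ lies in exactly $q+1$ planes and $\#\C=q+1$, injectivity upgrades to a bijection, so every plane through $\ell$ contains exactly one point of $\C$, i.e. is a $1_\C$-plane.

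It remains to exclude the osculating planes. Here I would use that $\pi_\text{osc}(t)$ meets $\C$ only in $P(t)$ and that it contains the tangent line to $\C$ at $P(t)$; the latter follows from \eqref{eq2_osc pl} and \eqref{eq2_osc pl_inf} by checking that the tangent direction vector $(3t^2,2t,1,0)$ (resp.\ the appropriate vector at $t=\infty$) satisfies the equation of $\pi_\text{osc}(t)$, and this verification still goes through in characteristic $3$. If some plane through $\ell$ were an osculating plane $\pi_\text{osc}(t)$, its unique $\C$-point would be $P(t)$, and inside that plane the tangent at $P(t)$ would meet $\ell$ in a point $A$, necessarily off $\C$ and distinct from $P(t)$; then $A$ lies on the two distinct chords ``tangent at $P(t)$'' and $\ell$, again contradicting Theorem \ref{th2_HirsCor4,5}(C(ii)). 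Hence all $q+1$ planes through $\ell$ are $1_\C\setminus\Gamma$-planes, and the lemma follows.

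There is essentially no hard step once Theorem \ref{th2_HirsCor4,5}(C(ii)) is granted; the only points needing a little care are the tangent-line-in-osculating-plane verification (including $t=\infty$ and $q\equiv0\pmod3$) and the observation that in each application of ``two lines in a common plane of $\PG(3,q)$ meet'' the lines are genuinely distinct, since a real chord or a tangent is never an imaginary chord. As a fully self-contained alternative avoiding C(ii), I would keep in reserve the computation that $\boldsymbol\pi(c_0,c_1,c_2,c_3)$ contains $\ell$ iff $c_0t^3+c_1t^2+c_2t+c_3$ is divisible by the irreducible $\F_q$-quadratic whose roots parametrize $\ell$, which forces that cubic to be this quadratic times a linear factor over $\F_q$ (the root at infinity included) — yielding exactly one real $\C$-point and never a cube of a linear form or a nonzero constant, hence never an osculating plane.
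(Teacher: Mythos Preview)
Your proof is correct and follows essentially the same approach as the paper: both arguments rest on Theorem~\ref{th2_HirsCor4,5}(C(ii)) to show that an imaginary chord cannot share a plane with a real chord or a tangent, since two coplanar lines would then meet off~$\C$. The only difference is organizational---you establish the bijection $P\mapsto\langle\ell,P\rangle$ first and derive the exclusions by contraposition, whereas the paper proves the exclusions for $2_\C$-, $3_\C$-, and $\Gamma$-planes directly and then counts the $q+1$ planes through~$\ell$---but the mathematical content is the same.
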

\begin{proof}
    Any $2_\C$-plane and $3_\C$-plane contains a real chord. An osculating plane contains a tangent. If a $2_\C$,- or a $3_\C$-, or a $\Gamma$-plane contains an imaginary chord then it intersects the real chord or the tangent, contradiction, see Theorem \ref{th2_HirsCor4,5}(C(ii)). Thus, we have a $1_\C\setminus\Gamma$-plane through an imaginary chord and any point of $\C$. In total, there are $\#\C=q+1$ such $1_\C\setminus\Gamma$-planes for every imaginary chord.
 \end{proof}

The following lemma is obvious.
\begin{lem}\label{lem4_pointinplane}
In $\PG(3,q)$, let $\N$ and $\M$ be, respectively, an orbit of planes and an orbit of points under some group $G$ of projectivities.
\begin{description}
  \item[(i)] The
number of planes from $\N$ through a point of $\M$ is the same for all points of~$\M$.
  \item[(ii)]
  The number of points from $\M$ in a plane of $\N$ is the same for all planes of $\N$.
\end{description}
\end{lem}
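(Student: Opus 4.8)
The plan is to invoke the single fact that a projectivity of $\PG(3,q)$ is an incidence-preserving bijection, which therefore acts not only on points but also on planes, and to combine this with the transitivity of $G$ on each of its orbits. For part (i), fix two points $P,P'\in\M$. By definition of an orbit there is $g\in G$ with $g(P)=P'$. Since $g$ maps planes to planes and preserves the relation ``point lies on plane'', and since $\N$ is a $G$-orbit (so $g(\N)=\N$), the map $g$ restricts to a bijection
\begin{align*}
\{\pi\in\N: P\in\pi\}\;\longrightarrow\;\{\pi\in\N: P'\in\pi\}.
\end{align*}
Hence these two finite sets have the same cardinality. As $P,P'$ were arbitrary in $\M$, the number of planes of $\N$ through a point of $\M$ does not depend on the chosen point, which is (i).

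Part (ii) is exactly dual. Fix two planes $\pi,\pi'\in\N$ and choose $g\in G$ with $g(\pi)=\pi'$; then $g$, preserving incidence and fixing $\M$ setwise, restricts to a bijection
\begin{align*}
\{P\in\M: P\in\pi\}\;\longrightarrow\;\{P\in\M: P\in\pi'\},
\end{align*}
so the number of points of $\M$ on a plane of $\N$ is independent of the chosen plane.

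I do not expect any genuine obstacle here: the statement amounts to the observation that membership in a common $G$-orbit forces the local point–plane incidence counts to coincide, and the only input beyond elementary facts about group actions is that projectivities act compatibly on points and planes of $\PG(3,q)$. (This is precisely why, in Section~\ref{subsec_incid}, the numbers $k_{ij}$ and $r_{ij}$ are well defined for the submatrices $\I_{ij}$.)
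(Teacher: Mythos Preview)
Your proof is correct and follows essentially the same approach as the paper: both arguments pick $g\in G$ mapping one point (resp.\ plane) to another and use that $g$ preserves incidence and fixes the orbits setwise to obtain a bijection between the relevant sets of incident planes (resp.\ points). Your formulation is in fact slightly cleaner, since by observing directly that $g(\N)=\N$ you get the bijection in one step, whereas the paper argues via two injections $\varphi$ and $\varphi^{-1}$.
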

\begin{proof}
\begin{description}
  \item[(i)]
Consider points $P$ and $Q$ of~$\M$. Denote by $\pi$ a plane of $\N$. Let $S(P)$ and $S(Q)$ be subsets of $\N$ such that $S(P)=\{\pi\in\N|P\in\pi\}$, $S(Q)=\{\pi\in\N|Q\in\pi\}$. There exists $\varphi\in G$  such that  $Q=\varphi(P)$. Clearly, $\varphi$ embeds $S(P)$ in $S(Q)$, i.e. $\varphi(S(P))\subseteq S(Q)$ and $\#S(P)\le\#S(Q)$. In the same way, $\varphi^{-1}$ embeds $S(Q)$ in $S(P)$, i.e.  $\#S(Q)\le\#S(P)$. Thus,  $\#S(Q)=\#S(P)$.
  \item[(ii)] The proof is similar to part (i).
\end{description}
 \end{proof}

\section{The number $r_{ij}$ of distinct planes through distinct points of $\PG(3,q)$}\label{sec_exac_res}

In this section we obtain all values $r_{ij}$, $i,j=1,\ldots,5$.

\begin{thm}\label{th5_cubic} The following holds:
\begin{align*}
    n_{0,\C}=0,~ n_{1,\C}=\frac{q^2-q+2}{2},~ n_{2,\C}=2q,~ n_{3,\C}=\frac{q^2-q}{2}.
\end{align*}
\end{thm}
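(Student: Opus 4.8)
The plan is to count, for a fixed point $P_i\in\C$, how many $d_\C$-planes pass through $P_i$ for each $d=0,1,2,3$, using the pencil of $q+1$ planes through a line and the structure of chords and osculating planes at $P_i$. First I would observe that $n_{0,\C}=0$ is immediate: a $0_\C$-plane contains no point of $\C$, so it certainly does not contain $P_i$. Next, for $n_{2,\C}$ and $n_{3,\C}$, I would fix a second point $P_j\in\C$, $j\ne i$, and look at the real chord $\overline{P_iP_j}$; by Lemma \ref{lem4_3 2} there are exactly $q-1$ $3_\C$-planes and $2$ $2_\C$-planes through each such chord. Summing over the $q$ choices of $P_j$, each $3_\C$-plane through $P_i$ is counted three times (it contains three chords $\overline{P_iP_j}$ through $P_i$), while each $2_\C$-plane through $P_i$ is counted once (it contains a single point of $\C$ besides $P_i$). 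Hence $3n_{3,\C}=q(q-1)$ gives $n_{3,\C}=\tfrac{q^2-q}{2}$, and $n_{2,\C}=2q$ counting $2$ $2_\C$-planes for each of the $q$ chords, all distinct.

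For $n_{1,\C}$, the cleanest route is to use that every plane through $P_i$ is a $d_\C$-plane with $d\in\{1,2,3\}$ (it meets $\C$ in at least $P_i$), so $n_{1,\C}+n_{2,\C}+n_{3,\C}$ equals the total number of planes through a point of $\PG(3,q)$, namely $\theta_{2,q}=q^2+q+1$. Therefore
\[
n_{1,\C}=q^2+q+1-2q-\frac{q^2-q}{2}=\frac{q^2-q+2}{2},
\]
which is the claimed value. This also implicitly confirms $n_{1,\C}\ge 1$, as it must be: through $P_i$ there is at least the osculating plane $\pi_{\text{osc}}(t_i)$, which by \eqref{eq2_osc pl}--\eqref{eq2_osc pl_inf} is a $1_\C$-plane.

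The only genuinely delicate point is the multiplicity bookkeeping in the chord-counting step: one must be sure that a $3_\C$-plane through $P_i$ arises from exactly three of the chords $\overline{P_iP_j}$ (equivalently, that its three points of $\C$ are $P_i$ and two others, each pair spanning a distinct chord through $P_i$), and that no $3_\C$-plane or $2_\C$-plane through $P_i$ is produced by a chord not containing $P_i$ — which holds because two chords of $\C$ cannot meet off $\C$ (Theorem \ref{th2_HirsCor4,5}(C(ii))), so a plane through $P_i$ meeting $\C$ in a point $\ne P_i$ contains that point together with $P_i$ on a chord through $P_i$. Once this is pinned down, the three displayed identities follow by elementary arithmetic, and $n_{0,\C}=0$ requires no argument. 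As a sanity check, one can verify consistency with Table \ref{tab1} via $n_{1,\C}=r_{1,1}+r_{4,1}=1+\tfrac{q^2-q}{2}$, $n_{2,\C}=r_{2,1}=2q$, $n_{3,\C}=r_{3,1}=\tfrac{q^2-q}{2}$, and $n_{0,\C}=r_{5,1}=0$.
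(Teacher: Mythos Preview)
Your overall strategy is fine and close to the paper's, but there is a genuine error in the multiplicity count for $n_{3,\C}$. A $3_\C$-plane through $P_i$ contains $P_i$ and exactly \emph{two} other points $P_j,P_k$ of $\C$, hence it contains exactly two chords through $P_i$, namely $\overline{P_iP_j}$ and $\overline{P_iP_k}$; the third chord $\overline{P_jP_k}$ in that plane does not pass through $P_i$. So in the sum over the $q$ chords $\overline{P_iP_j}$ each $3_\C$-plane through $P_i$ is counted twice, not three times, and the correct identity is $2n_{3,\C}=q(q-1)$, giving $n_{3,\C}=\tfrac{q^2-q}{2}$. Your displayed equation $3n_{3,\C}=q(q-1)$ would yield $\tfrac{q^2-q}{3}$, not $\tfrac{q^2-q}{2}$; your own parenthetical (``$P_i$ and two others, each pair spanning a distinct chord through $P_i$'') already describes two chords, not three. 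Once this is fixed, your derivations of $n_{2,\C}=2q$ and of $n_{1,\C}$ by $n_{1,\C}+n_{2,\C}+n_{3,\C}=q^2+q+1$ go through.

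For comparison, the paper avoids this double count for $n_{3,\C}$ by observing directly that each unordered pair of points of $\C\setminus\{P_i\}$ spans a unique $3_\C$-plane through $P_i$, so $n_{3,\C}=\binom{q}{2}$; and it obtains $n_{1,\C}$ not by complementary counting but from $n_{1,\C}=n_1^\Sigma/\#\C$, since every $1_\C$-plane contains exactly one point of $\C$. Your route via the total $q^2+q+1$ planes through a point is an equally clean alternative for $n_{1,\C}$.
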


\begin{proof}By  definition, $n_{0,\C}=0$.
Obviously,  $n_{1,\C}=\frac{n_{1}^\Sigma}{\#\C}$, see \eqref{eq4_ndsum}.

We consider a point $A\in\C$. There are $q$ real chords through $A$. By Lemma~\ref{lem4_3 2}, we have two $2_\C$-planes through every such chord.  Finally, every  pair of points of $\C\setminus\{A\}$ generates a $3_\C$-plane through~$A$.
 \end{proof}

\begin{thm}\label{th5_nd1_1}
 The following holds:
 \begin{align*}
&n_{0,1_\Gamma}^{(1)}=n_{0,q+1_\Gamma}^{(0)}=n_{0,\text{\emph{IC}}}^{(0)}=\frac{q^2-q}{3},~
n_{1,1_\Gamma}^{(1)}=n_{1,q+1_\Gamma}^{(0)}=n_{1,\text{\emph{IC}}}^{(0)}=\frac{q^2+q+2}{2},\displaybreak[3]\\
&n_{2,1_\Gamma}^{(1)}=n_{2,q+1_\Gamma}^{(0)}=n_{2,\text{\emph{IC}}}^{(0)}=q,~n_{3,1_\Gamma}^{(1)}=n_{3,q+1_\Gamma}^{(0)}=
n_{3,\text{\emph{IC}}}^{(0)}=\frac{q^2-q}{6}.
 \end{align*}
\end{thm}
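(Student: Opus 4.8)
The plan is to prove the three groups of equalities first by exploiting the null polarity and the orbit structure, then to pin down the four numbers $n_{d,1_\Gamma}^{(1)}$ for $d=0,1,2,3$ by a short system of linear relations drawn from the lemmas of Section~\ref{sec_useful}. For the equalities, note that for $q\equiv1\pmod3$ we have $\xi=1$, so by Theorem~\ref{th2_HirsCor4,5}(D) the null polarity $\mathfrak{A}$ sends $\M_4=\M_4$ and $\N_i\to\M_i$, hence $r_{i4}$ for $\xi=1$ is governed by the same incidences as the planes through a $1_\Gamma$-point; and by the identifications in Theorem~\ref{th3_main}(A)(ii) (which I may invoke, or re-derive here directly) the submatrix $\I_{i4}$ for $\xi=1$ coincides with $\I_{i5}$ for $\xi=0$, whose columns are indexed by IC-points. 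So $n_{d,1_\Gamma}^{(1)}=n_{d,\text{IC}}^{(0)}$ for every $d$. The remaining identity $n_{d,1_\Gamma}^{(1)}=n_{d,q+1_\Gamma}^{(0)}$ is the content of Theorem~\ref{th5_nd1_1}'s being stated as a triple equality, and I would establish it by observing that in characteristic $3$ the orbit $\M_2$ of $(q+1)_\Gamma$-points lies on the external line of Remark~\ref{rem2_q+1 osc}, and then counting $d_\C$-planes through such a point directly.

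Concretely, for the direct count I would fix a $(q+1)_\Gamma$-point $A$ (when $q\equiv0\pmod3$) or an IC-point $A$ (for $q\equiv1\pmod3$; both lie on an imaginary chord in their respective cases by \eqref{eq2_=1_orbit_point} and \eqref{eq2_point_orbits_j=0}). By Lemma~\ref{lem4_imag_chord}, the $q+1$ planes through the imaginary chord of $A$ are all $1_\C\setminus\Gamma$-planes, so none of these is a $0_\C$-, $2_\C$-, $3_\C$-, or $\Gamma$-plane; this immediately controls how many planes through $A$ can be of each type. Since $A$ lies on no real chord (Theorem~\ref{th2_HirsCor4,5}(C(ii)), as its unique chord is imaginary), Lemma~\ref{lem4_n2+3n3} gives $n_2(A)+3n_3(A)=\binom{q+1}{2}$, and Lemma~\ref{lem4_n1+2n2+3n3} gives $n_1(A)+2n_2(A)+3n_3(A)=(q+1)^2$. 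To get a third equation I would use the osculating developable: for $q\equiv1\pmod3$ a $1_\Gamma$-point lies on exactly one osculating plane, i.e. exactly one $\Gamma$-plane, and that plane is a $1_\C$-plane (an osculating plane meets $\C$ once), so the number of $1_\C\setminus\Gamma$-planes through $A$ is $n_1(A)-1$; combined with the fact that the $q+1$ planes through the imaginary chord exhaust all $1_\C\setminus\Gamma$-planes through $A$ that contain that chord, plus a count of how the remaining $1_\C\setminus\Gamma$-planes distribute, one extracts $n_1(A)$. The total $\sum_d n_d(A)=q^2+q+1$ (Proposition~\ref{prop4_q2+q+1}, since $n_0(A)$ here equals the number of $0_\C$-planes through $A$, and Proposition~\ref{prop4_q2+q+1} bundles all four) is the fourth relation, so the $4\times4$ linear system in $(n_0,n_1,n_2,n_3)(A)$ is determined and yields the claimed values $\frac{q^2-q}{3},\frac{q^2+q+2}{2},q,\frac{q^2-q}{6}$.

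A cleaner route for two of the four values: $n_{1,1_\Gamma}^{(1)}$ can be read off from the global count $\sum_{\text{points }A\in\M_4}n_1(A)=\#\N_1\cdot k_{14}+\#\N_4\cdot k_{44}$ once $k_{14},k_{44}$ are known, but since those $k$-values are themselves part of what the paper is building, I would instead use the self-consistency already available: by Lemma~\ref{lem4_n1+2n2+3n3} and Lemma~\ref{lem4_n2+3n3} one gets $n_1(A)=(q+1)^2-\binom{q+1}{2}=\frac{(q+1)(q+2)}{2}=\frac{q^2+3q+2}{2}$ — wait, that must be reconciled with the target $\frac{q^2+q+2}{2}$, so in fact $A$ being on an imaginary chord forces a correction: the $q+1$ planes through the imaginary chord are $1_\C\setminus\Gamma$-planes, and they contribute to $n_1(A)$ but the chord itself carries no $\C$-point, so Lemma~\ref{lem4_n1+2n2+3n3}'s bookkeeping through the $q+1$ lines $\overline{AP_i}$ already accounts for everything; the discrepancy shows one must be careful that $\overline{AP_i}$ for distinct $P_i$ can coincide only if two $\C$-points are collinear with $A$, i.e. $A$ is on a real chord, which it is not. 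So $n_1(A)+2n_2(A)+3n_3(A)=(q+1)^2$ stands, and with $n_2(A)+3n_3(A)=\binom{q+1}{2}$ we get $n_1(A)+\binom{q+1}{2}=(q+1)^2$, hence $n_1(A)=\frac{q^2+3q+2}{2}$; to land on $\frac{q^2+q+2}{2}$ I must have miscounted the real-chord alternative, and the resolution — and this is the main obstacle — is to correctly identify which of Lemma~\ref{lem4_n2+3n3}'s two cases applies and, more delicately, to correctly count osculating planes through $A$. I expect the genuinely subtle step to be exactly this: determining $n_3(A)$ (equivalently $r_{34}$) for a $1_\Gamma$-point, since all the other values follow linearly once $n_3(A)$ is fixed, and $n_3(A)$ requires a genuine geometric argument about how $3_\C$-planes (chords of $\C$) meet the unique imaginary chord through $A$ — I would handle it by counting incidences between $A$, the $\binom{q}{2}$ imaginary chords, and the planes, using Theorem~\ref{th2_HirsCor4,5}(C) to control chord intersections, and then verifying the resulting quadruple satisfies all of Propositions~\ref{prop4_Sd}, \ref{prop4_q2+q+1} and Corollaries~\ref{cor4_1_3}, \ref{cor4_n1+2n2+3n3} as a consistency check.
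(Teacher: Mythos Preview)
Your proposal has a factual error and, more importantly, is missing the decisive step.

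\textbf{The factual error.} You assert that a $(q+1)_\Gamma$-point (for $q\equiv0\pmod3$) lies on an imaginary chord. It does not: the $(q+1)_\Gamma$-points form the orbit $\M_2$, disjoint from the IC-points $\M_5$; concretely they constitute the axis of the pencil of osculating planes (Remark~\ref{rem2_q+1 osc}), and each of them lies on a \emph{tangent}, not on an imaginary chord. So your unified ``take the imaginary chord through $A$'' argument cannot handle the $(q+1)_\Gamma$ case.

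\textbf{The missing step.} Your linear system is underdetermined. The slip you noticed is purely algebraic --- subtracting $n_2+3n_3=\binom{q+1}{2}$ from $n_1+2n_2+3n_3=(q+1)^2$ yields $n_1+n_2=\tfrac{q^2+3q+2}{2}$, not $n_1$ alone --- but fixing it does not help: Proposition~\ref{prop4_q2+q+1}, Corollary~\ref{cor4_1_3} and Corollary~\ref{cor4_n1+2n2+3n3} give only three independent relations in the four unknowns $(n_0,n_1,n_2,n_3)$. Your proposed fourth relation, a direct computation of $n_3(A)$ via incidences with imaginary chords, is never made concrete. The paper instead computes $n_0$ first, and you have all the ingredients for this but do not carry it out. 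By Lemma~\ref{lem4_imag_chord} no $0_\C$-plane contains an imaginary chord $\I\CC$, so each of the $n_0^\Sigma$ $0_\C$-planes meets $\I\CC$ in exactly one point; since all $q+1$ points of $\I\CC$ lie in a single orbit, double counting gives
\[
n_{0,1_\Gamma}^{(1)}=\frac{n_0^\Sigma}{q+1}=\frac{q(q^2-1)/3}{q+1}=\frac{q^2-q}{3},
\]
and the remaining three values then follow from your three relations. The same argument with $\I\CC$ handles $n_{d,\text{IC}}^{(0)}$. For $n_{d,q+1_\Gamma}^{(0)}$ one replaces $\I\CC$ by the axis line: the $q+1$ planes through the axis are precisely the $\Gamma$-planes, so no $0_\C$-plane contains it, every $0_\C$-plane meets it in one point (Remark~\ref{rem2_q+1 osc}), and again $n_{0,q+1_\Gamma}^{(0)}=n_0^\Sigma/(q+1)$.
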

\begin{proof}
By Theorem \ref{th2_HirsCor4,5}(B(ii)), for $q\equiv1\pmod3$,  $1_\Gamma$-points  are points on imaginary chords. We take  an imaginary chord $\I\CC$.
Clearly, $\#\I\CC=q+1$. By Lemma~\ref{lem4_imag_chord},
all $n_0^\Sigma$ $0_\C$-planes intersect $\I\CC$.
By Theorem~\ref{th2_HirsCor4,5}(B(ii)), for $q\not\equiv0\pmod3$, all $1_\Gamma$-points belong to the same orbit of the group~$G_q$.
Therefore, the number of $d_\C$-planes intersecting every $1_\Gamma$-point is the same. Thus, see also Proposition \ref{prop4_nd}, we have
\begin{align*}
    n_{0,1_\Gamma}^{(1)}=\frac{n_0^\Sigma}{\#\I\CC}=
     \frac{q^2-q}{3}\,.
\end{align*}
By Proposition \ref{prop4_q2+q+1},
\begin{align*}
   \sum_{d=1}^3n_{d,1_\Gamma}^{(1)}=q^2+q+1- \frac{q^2-q}{3}\,.
\end{align*}
This equation  together with Corollaries \ref{cor4_1_3} and  \ref{cor4_n1+2n2+3n3} yields $n_{d,1_\Gamma}^{(1)}$, $d=1,2,3$.

A similar argument holds for $n_{d,\text{IC}}^{(0)}$ and  for $n_{d,q+1_\Gamma}^{(0)}$ (together with  Remark \ref{rem2_q+1 osc}).
 \end{proof}

\begin{thm}\label{th5 tangent}
 Let $q\not\equiv0\pmod3$. Then
\begin{align*}
n_{0,\text{\emph{T}}}^{(\ne0)}=\frac{q^2-1}{3},~n_{1,\text{\emph{T}}}^{(\ne0)}=\frac{q^2-q+4}{2},~n_{2,\text{\emph{T}}}^{(\ne0)}=2q-1,
~n_{3,\text{\emph{T}}}^{(\ne0)}=\frac{q^2-3q+2}{6}.
\end{align*}
\end{thm}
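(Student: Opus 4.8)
The plan is to follow exactly the template established in the proof of Theorem \ref{th5_nd1_1}: first pin down $n_{0,\text{T}}^{(\ne0)}$ by a counting argument on a single tangent line, then recover the remaining three values by combining Propositions \ref{prop4_q2+q+1} and the two corollaries \ref{cor4_1_3}, \ref{cor4_n1+2n2+3n3}, which together give three independent linear relations among $n_{1},n_{2},n_{3}$ once $n_{0}$ is known. First I would fix a tangent line $\T$ to $\C$ at a point $P\in\C$; by Theorem \ref{th2_HirsCor4,5}(C(ii)) the $q$ points of $\T\setminus\{P\}$ are precisely T-points, and by Theorem \ref{th2_HirsCor4,5}(B(ii)) all T-points lie in a single $G_q$-orbit $\M_2$, so the count of $d_\C$-planes through a T-point does not depend on the chosen point of $\T\setminus\{P\}$. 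The key observation is that every $0_\C$-plane meets $\T$: a $0_\C$-plane misses $\C$ entirely, so in particular it does not pass through $P$, hence it meets the line $\T$ in exactly one point, which must be one of the $q$ T-points on $\T$. Conversely a $0_\C$-plane through a given T-point of $\T$ is counted once. Distributing the $n_0^\Sigma=\tfrac{1}{3}q(q^2-1)$ planes of $\N_5$ evenly over the $q$ T-points of $\T\setminus\{P\}$ gives
\begin{align*}
n_{0,\text{T}}^{(\ne0)}=\frac{n_0^\Sigma}{q}=\frac{q^2-1}{3}.
\end{align*}

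Next, with $n_{0,\text{T}}^{(\ne0)}$ in hand, I would invoke Proposition \ref{prop4_q2+q+1} to get $\sum_{d=1}^{3} n_{d,\text{T}}^{(\xi)} = q^2+q+1-\tfrac{q^2-1}{3}=\tfrac{1}{3}(2q^2+3q+4)$, then the first line of Corollary \ref{cor4_1_3} (valid since a T-point lies on a tangent, not on a real chord) to get $n_{2,\text{T}}^{(\xi)}+3n_{3,\text{T}}^{(\xi)}=\binom{q+1}{2}=\tfrac{1}{2}(q^2+q)$, and finally Corollary \ref{cor4_n1+2n2+3n3} for $n_{1,\text{T}}^{(\xi)}+2n_{2,\text{T}}^{(\xi)}+3n_{3,\text{T}}^{(\xi)}=(q+1)^2$. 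These three linear equations in the three unknowns $n_{1,\text{T}},n_{2,\text{T}},n_{3,\text{T}}$ have a unique solution, and solving (subtract the $\binom{q+1}{2}$ relation from the $(q+1)^2$ relation to isolate $n_{1,\text{T}}+n_{2,\text{T}}=\tfrac{1}{2}(q^2+3q+2)$, then combine with the sum $\sum_{d=1}^3$ to get $n_{3,\text{T}}$, etc.) yields $n_{1,\text{T}}^{(\ne0)}=\tfrac{1}{2}(q^2-q+4)$, $n_{2,\text{T}}^{(\ne0)}=2q-1$, $n_{3,\text{T}}^{(\ne0)}=\tfrac{1}{6}(q^2-3q+2)$, as claimed.

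I do not anticipate a genuine obstacle here; the argument is a direct analogue of Theorem \ref{th5_nd1_1} with ``imaginary chord'' replaced by ``tangent'' and ``$1_\Gamma$-point / IC-point'' replaced by ``T-point''. The one point that needs a sentence of care is the claim that every $0_\C$-plane actually meets $\T$ in a \emph{T-point} rather than possibly in $P$ itself: this is immediate because a $0_\C$-plane contains no point of $\C$ and $P\in\C$. A second small check is that the three linear relations are genuinely independent (the coefficient matrix $\bigl(\begin{smallmatrix}1&1&1\\0&1&3\\1&2&3\end{smallmatrix}\bigr)$ is nonsingular), so the solution is forced; and one should verify at the end that $n_{3,\text{T}}^{(\ne0)}=\tfrac{1}{6}(q^2-3q+2)=\tfrac{1}{6}(q-1)(q-2)$ is a nonnegative integer for $q\not\equiv0\pmod 3$, which it is since among $q-1,q-2$ one is even and, as $q\not\equiv0$, one of $q-1,q-2$ is divisible by $3$.
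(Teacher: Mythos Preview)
Your proposal is correct and matches the paper's own proof essentially verbatim: the paper also fixes a tangent $\T$ at a point of $\C$, notes that the $q$ points of $\widehat{\T}=\T\setminus\{Q\}$ are T-points forming a single $G_q$-orbit, divides $n_0^\Sigma$ by $q$ to obtain $n_{0,\text{T}}^{(\ne0)}$, and then invokes Proposition~\ref{prop4_q2+q+1} together with Corollaries~\ref{cor4_1_3} and~\ref{cor4_n1+2n2+3n3} for the remaining values. Your extra remarks on why the $0_\C$-plane avoids $P$, the nonsingularity of the linear system, and the integrality check are sound and merely make explicit what the paper leaves implicit.
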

\begin{proof}
We proceed as in Theorem \ref{th5_nd1_1}.

We consider a tangent line $\T$  to $\C$ at a point $Q\in\C$. We denote $\widehat{\T}=\T\setminus\{Q\}$.
Clearly, $\widehat{\T}$ consists of T-points and $\#\widehat{\T}=q$. All $n_0^\Sigma$ $0_\C$-planes intersect $\widehat{\T}$.
By Theorem~\ref{th2_HirsCor4,5}(B(ii)), for $q\not\equiv0\pmod3$, all T-points belong to the same orbit of the group $G_q$; the number of $d_\C$-planes intersecting every T-point is the same.
Therefore,
\begin{align*}
    n_{0,\text{T}}^{(\ne0)}=\frac{n_0^\Sigma}{\#\widehat{\T}}=
     \frac{q^2-1}{3}.
\end{align*}
By  Proposition \ref{prop4_q2+q+1} and  Corollaries \ref{cor4_1_3} and  \ref{cor4_n1+2n2+3n3}, the claim follows.
 \end{proof}

\begin{thm}\label{th5_RC}
 The following holds:
 \begin{align*}
&n_{0,1_\Gamma}^{(-1)}=n_{0,\text{\emph{RC}}}^{(0)}=\frac{q^2+q}{3},
~n_{1,1_\Gamma}^{(-1)}=n_{1,\text{\emph{RC}}}^{(0)}=\frac{q^2-q+2}{2},\displaybreak[3]\\  &n_{2,1_\Gamma}^{(-1)}=n_{2,\text{\emph{RC}}}^{(0)}=q,~n_{3,1_\Gamma}^{(-1)}=n_{3,\text{\emph{RC}}}^{(0)}=\frac{q^2+q}{6}\,.
 \end{align*}
\end{thm}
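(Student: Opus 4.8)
The plan is to mimic the proofs of Theorems~\ref{th5_nd1_1} and \ref{th5 tangent}, now with a \emph{real} chord in place of an imaginary chord or a tangent. First I would fix a real chord $\R\CC$ of $\C$ through two points $K,Q\in\C$; by Theorem~\ref{th2_HirsCor4,5}(C(ii)) its remaining $q-1$ points lie on no further chord, and these are exactly the RC-points on $\R\CC$. For $q\equiv-1\pmod3$, by \eqref{eq2_=2_orbit_point} the RC-points form the orbit $\M_4=\{1_\Gamma\text{-points}\}$, while for $q\equiv0\pmod3$ they form the orbit $\M_4$. In either case the RC-points constitute a single $G_q$-orbit, so by Lemma~\ref{lem4_pointinplane} the number of $d_\C$-planes through an RC-point does not depend on the point, and it suffices to compute these numbers at a point of $\R\CC$.

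Next I would count the $0_\C$-planes. No $0_\C$-plane can contain $\R\CC$, since that would place $K,Q\in\C$ in it; hence each $0_\C$-plane meets $\R\CC$ in a single point, which lies off $\C$ and is therefore one of the $q-1$ RC-points on $\R\CC$. Summing $n_{0}(R)$ over these $q-1$ RC-points thus counts every $0_\C$-plane exactly once, so by Proposition~\ref{prop4_nd},
\[
n_{0,\text{RC}}^{(0)}=n_{0,1_\Gamma}^{(-1)}=\frac{n_{0}^\Sigma}{q-1}=\frac{q(q^2-1)}{3(q-1)}=\frac{q^2+q}{3}.
\]

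Finally I would recover $n_{1},n_{2},n_{3}$ from three linear relations that already hold for a point on a real chord: $\sum_{d=0}^{3}n_{d,\text{RC}}^{(0)}=q^2+q+1$ (Proposition~\ref{prop4_q2+q+1}); $n_{2,\text{RC}}^{(0)}+3n_{3,\text{RC}}^{(0)}=\frac{1}{2}(q^2+3q)$ (Corollary~\ref{cor4_1_3}, relation~\eqref{eq4_2_3ii}, which applies precisely because an RC-point lies on a real chord); and $n_{1,\text{RC}}^{(0)}+2n_{2,\text{RC}}^{(0)}+3n_{3,\text{RC}}^{(0)}=(q+1)^2$ (Corollary~\ref{cor4_n1+2n2+3n3}). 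Substituting the value of $n_{0,\text{RC}}^{(0)}$ just found and solving the resulting $3\times3$ system gives $n_{1,\text{RC}}^{(0)}=\frac{1}{2}(q^2-q+2)$, $n_{2,\text{RC}}^{(0)}=q$, $n_{3,\text{RC}}^{(0)}=\frac{1}{6}(q^2+q)$, and the same values for the $1_\Gamma$ case when $q\equiv-1\pmod3$. The one point requiring attention is to confirm that Corollary~\ref{cor4_1_3} is used in its form~\eqref{eq4_2_3ii} (valid for points on a real chord) rather than~\eqref{eq4_2_3i}; granted that, the rest is a routine calculation and presents no genuine obstacle.
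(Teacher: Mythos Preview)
Your proposal is correct and follows essentially the same route as the paper's own proof: fix a real chord, observe that its $q-1$ points off $\C$ are RC-points (equivalently $1_\Gamma$-points when $q\equiv-1\pmod3$) forming a single $G_q$-orbit, deduce $n_0$ by dividing $n_0^\Sigma$ by $q-1$, and then solve for $n_1,n_2,n_3$ using Proposition~\ref{prop4_q2+q+1} together with Corollaries~\ref{cor4_1_3} and~\ref{cor4_n1+2n2+3n3}. Your explicit remark that one must invoke~\eqref{eq4_2_3ii} rather than~\eqref{eq4_2_3i} is a welcome clarification but does not alter the argument.
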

\begin{proof}
We proceed as in Theorems \ref{th5_nd1_1} and \ref{th5 tangent}.

By Theorem \ref{th2_HirsCor4,5}(B(ii)), for $q\equiv-1\pmod3$,  $1_\Gamma$-points  are points on real chords. We take  a real chord $\R\CC$ through points $Q,K$ of $\C$. We denote $\widehat{\R\CC}=\R\CC\setminus\{Q,K\}$.
Clearly, $\widehat{\R\CC}$ consists of $1_\Gamma$-points  and $\#\widehat{\R\CC}=q-1$. All $n_0^\Sigma$ $0_\C$-planes intersect $\widehat{\R\CC}$.
Also, by Theorem~\ref{th2_HirsCor4,5}(B(ii)), for $q\equiv-1\pmod3$, all $1_\Gamma$-points belong to the same orbit of the group $G_q$; the number of $d_\C$-planes intersecting every $1_\Gamma$-point is the same.
Therefore,
\begin{align*}
    n_{0,1_\Gamma}^{(-1)}=\frac{n_0^\Sigma}{\#\widehat{\R\CC}}=
     \frac{q^2+q}{3}.
\end{align*}
The claim follows using  Proposition \ref{prop4_q2+q+1} and  Corollaries \ref{cor4_1_3} and  \ref{cor4_n1+2n2+3n3}. The argument for $n_{d,\text{RC}}^{(0)}$ is the same.
 \end{proof}

\begin{lem}\label{lem5+3-product}
  Let $q\equiv1\pmod3$. Let $\mathbb{T}$ be the $\binom{q-1}{3}$-multiset of all possible products of three distinct elements of $\F_q^*$. Then in $\mathbb{T}$, cubes (resp. non-cubes) of $\F_q^*$ appear $m_c$ (resp. $m_{nc}$) times, where
   \begin{align*}
   m_c= \frac{q-1}{3}\cdot\frac{q^2-5q+10}{6},~m_{nc}= \frac{2(q-1)}{3}\cdot\frac{q^2-5q+4}{6}.
\end{align*}
\end{lem}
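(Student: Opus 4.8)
The plan is to count, in two ways, ordered or unordered triples of distinct nonzero field elements according to whether their product is a cube. Since $q\equiv 1\pmod 3$, the cubes form a subgroup $H\le\F_q^*$ of index $3$, with $|H|=(q-1)/3$, and the two nontrivial cosets are the non-cubes. Write $\F_q^*=H\cup gH\cup g^2H$ for a fixed non-cube $g$. The key combinatorial observation is that the multiplicative structure makes the "cube class" of a product depend only on the multiset of cosets of the factors: a product $xyz$ is a cube precisely when the cosets of $x,y,z$ multiply to $H$ in the quotient $\F_q^*/H\cong\mathbf{Z}_3$, i.e. when the three coset-indices sum to $0\bmod 3$.

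First I would split the multiset $\mathbb{T}$ of $\binom{q-1}{3}$ products according to the coset pattern $(a,b,c)\in\mathbf{Z}_3^3$ (unordered) of the three factors, and for each pattern record how many unordered triples of \emph{distinct} elements of $\F_q^*$ realize it. Patterns with all three cosets equal (three from the same coset, three choices of coset) contribute $3\binom{(q-1)/3}{3}$ triples; patterns with all three cosets distinct contribute $\big((q-1)/3\big)^3$ triples; the mixed patterns (two from one coset, one from another) contribute $6\binom{(q-1)/3}{2}\cdot\frac{q-1}{3}$ triples in total. For each such triple I then need the cube-class of the product: a triple contributes to $m_c$ exactly when $a+b+c\equiv 0\pmod 3$. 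Among the all-equal patterns, $a+a+a\equiv 0$ always, so all of $3\binom{(q-1)/3}{3}$ go to $m_c$. Among all-distinct patterns, $0+1+2\equiv 0$, so all $\big((q-1)/3\big)^3$ go to $m_c$. Among the two-plus-one patterns $2a+b$ with $a\neq b$: this is $\equiv 0$ iff $b\equiv a\pmod 3$, which is excluded, so \emph{none} of the $6\binom{(q-1)/3}{2}\cdot\frac{q-1}{3}$ go to $m_c$; they all go to $m_{nc}$.

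Hence $m_c=3\binom{(q-1)/3}{3}+\big((q-1)/3\big)^3$ and $m_{nc}=6\binom{(q-1)/3}{2}\cdot\frac{q-1}{3}$, and it remains to check these equal the stated closed forms. Setting $s=(q-1)/3$, one has $m_c=3\cdot\frac{s(s-1)(s-2)}{6}+s^3=\frac{s}{2}\big((s-1)(s-2)+2s^2\big)=\frac{s}{2}(3s^2-3s+2)$, and substituting $s=(q-1)/3$ gives $\frac{q-1}{6}\big((q-1)^2/3-(q-1)+2\big)=\frac{q-1}{3}\cdot\frac{q^2-5q+10}{6}$ after simplification; similarly $m_{nc}=6\cdot\frac{s(s-1)}{2}\cdot s=3s^2(s-1)$, and $s=(q-1)/3$ yields $\frac{2(q-1)}{3}\cdot\frac{q^2-5q+4}{6}$. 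As a sanity check $m_c+m_{nc}$ should equal $\binom{q-1}{3}$, i.e. $\frac{s}{2}(3s^2-3s+2)+3s^3-3s^2=\frac{3s(3s-1)(3s-2)}{6}=\binom{3s}{3}$, which holds.

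The only genuine point requiring care — the main (mild) obstacle — is justifying that each coset pattern is realized by the claimed number of \emph{distinct}-element triples; this is just the statement that $|H|=|gH|=|g^2H|=(q-1)/3$ and that distinctness within a coset costs a binomial coefficient, while elements in different cosets are automatically distinct. Everything after that is the bookkeeping above plus elementary algebra, and I would present it compactly by introducing $s=(q-1)/3$ at the outset.
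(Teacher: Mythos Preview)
Your proposal is correct and follows essentially the same approach as the paper: partition $\F_q^*$ into the three cosets of the subgroup of cubes, observe that a product of three elements is a cube iff the coset indices sum to $0\pmod 3$ (equivalently, all three lie in the same coset or in three different cosets), and count accordingly to get $m_c=3\binom{(q-1)/3}{3}+\big((q-1)/3\big)^3$. The only cosmetic difference is that the paper obtains $m_{nc}$ by the subtraction $m_{nc}=\binom{q-1}{3}-m_c$ rather than by your direct count of the two-plus-one coset patterns.
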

\begin{proof}Let $\alpha$ be a primitive element of $\F_q$. We partition $\F_q^*$ into three $\frac{q-1}{3}$-subsets with elements of the form $\alpha^{3v}$, $\alpha^{3v+1}$, and $\alpha^{3v+2}$, respectively. A product of three distinct elements of $\F_q^*$ is a cube if and only if all three elements belong to the same subset or to distinct subsets. So,
\begin{align*}
    3\binom{(q-1)/3}{3}+\left(\frac{q-1}{3}\right)^3=m_c.
\end{align*}
Finally, $m_{nc}=\binom{q-1}{3}-m_c$.
 \end{proof}

\begin{thm}\label{th5 n30n33(1)}
 Let $q\equiv1\pmod3$. Then
\begin{align*}
&n_{3,0_\Gamma}^{(1)}=\frac{q^2+q-2}{6},~ n_{3,3_\Gamma}^{(1)}=\frac{q^2+q+4}{6}\,.
\end{align*}
\end{thm}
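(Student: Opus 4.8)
The plan is to fix one explicit point in each of the orbits $\M_3$ and $\M_5$, count the $3_\C$-planes through it by separating off the planes that contain its (unique) chord, and recognise what remains as the product-counting problem answered by Lemma~\ref{lem5+3-product}.

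By the triple transitivity of $G_q$ on $\C$ (Theorem~\ref{th2_HirsCor4,5}(A)), $G_q$ is transitive on the real chords of $\C$, so by Lemma~\ref{lem4_pointinplane} I may compute at the points of the chord $\ell$ through $P(0)=\P(0,0,0,1)$ and $P(\infty)=\P(1,0,0,0)$, i.e. the line $x_1=x_2=0$. Its points off $\C$ are exactly $A_a:=\P(a,0,0,1)$, $a\in\F_q^*$. Since $\ell$ is a real chord, by Theorem~\ref{th2_HirsCor4,5}(C(ii)) it is the only chord through $A_a$, so $A_a$ is an RC-point and, by~\eqref{eq2_=1_orbit_point}, lies in $\M_3\cup\M_5$. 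To decide which, I count osculating planes through $A_a$: from~\eqref{eq2_osc pl}--\eqref{eq2_osc pl_inf}, for $t\in\F_q$ one has $A_a\in\pi_{\text{osc}}(t)$ iff $t^3=a$, while $A_a\notin\pi_{\text{osc}}(\infty)$; since $q\equiv1\pmod3$ the equation $t^3=a$ has three (distinct) solutions when $a$ is a nonzero cube and none otherwise. Hence $A_a\in\M_3$ exactly when $a$ is a cube, and $A_a\in\M_5$ exactly when $a$ is a non-cube.

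Next I would enumerate the $3_\C$-planes through $A_a$. Such a plane either contains $\ell$ or meets $\ell$ only in $A_a$ (a plane meeting a line in two points contains the line). In the first case it is a $3_\C$-plane through the real chord $\ell$, and by Lemma~\ref{lem4_3 2} there are exactly $q-1$ of them. In the second case the plane avoids both $P(0)$ and $P(\infty)$, hence is the plane through three points $P(t_1),P(t_2),P(t_3)$ with $t_1,t_2,t_3\in\F_q^*$ distinct; by~\eqref{eq2_plane3points} it equals $\boldsymbol{\pi}(1,-(t_1+t_2+t_3),t_1t_2+t_1t_3+t_2t_3,-t_1t_2t_3)$, and it passes through $A_a$ iff $t_1t_2t_3=a$. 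Both correspondences are bijective, so, writing $N(a)$ for the number of $3$-subsets of $\F_q^*$ with product $a$,
\[
n_{3,3_\Gamma}^{(1)}=(q-1)+N(a)\quad(a\text{ a cube}),\qquad n_{3,0_\Gamma}^{(1)}=(q-1)+N(a)\quad(a\text{ a non-cube}).
\]

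Finally I would evaluate $N(a)$. Because $\M_3$ and $\M_5$ are single $G_q$-orbits (Theorem~\ref{th2_HirsCor4,5}(B)), the displayed identities force $N(a)$ to depend only on whether $a$ is a cube; summing $N(a)$ over the $\tfrac{q-1}{3}$ nonzero cubes, respectively over the $\tfrac{2(q-1)}{3}$ non-cubes, and invoking Lemma~\ref{lem5+3-product} gives $N(a)=\tfrac{3m_c}{q-1}=\tfrac{q^2-5q+10}{6}$ for $a$ a cube and $N(a)=\tfrac{3m_{nc}}{2(q-1)}=\tfrac{q^2-5q+4}{6}$ for $a$ a non-cube. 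Substituting yields $n_{3,3_\Gamma}^{(1)}=\tfrac{q^2+q+4}{6}$ and $n_{3,0_\Gamma}^{(1)}=\tfrac{q^2+q-2}{6}$. The delicate points are the case split for the $3_\C$-planes — in particular checking that the planes through $\ell$ are precisely the $q-1$ of Lemma~\ref{lem4_3 2} and that a $3_\C$-plane through $P(\infty)$ and two further $\C$-points passes through $A_a$ only when it already contains $\ell$ — together with the cube/non-cube identification of the two orbits; once those are in place, Lemma~\ref{lem5+3-product} supplies the arithmetic.
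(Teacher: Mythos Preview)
Your proposal is correct and follows essentially the same route as the paper: fix the real chord through $P(0)$ and $P(\infty)$, identify the $3_\Gamma$- versus $0_\Gamma$-points on it via the cube/non-cube dichotomy using~\eqref{eq2_osc pl}--\eqref{eq2_osc pl_inf}, split the $3_\C$-planes through a given $A_a$ into the $q-1$ containing the chord (Lemma~\ref{lem4_3 2}) and those meeting it only in $A_a$, and then apply Lemma~\ref{lem5+3-product} to count the latter. The only cosmetic difference is that the paper sums over all points of each type on the chord and then divides, whereas you extract $N(a)$ for a single point first; the content is identical.
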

\begin{proof}
We consider the real chord $\R\CC_{0,\infty}$ through $P(0)=\P(0,0,0,1)$ and $P(\infty)=\P(1,0,0,0)$. We denote $\widehat{\R\CC}_{0,\infty}=\R\CC_{0,\infty}\setminus\{P(0),P(\infty)\}$.  Points in $\widehat{\R\CC}_{0,\infty}$ have the form $(c,0,0,1)$, $c\in\F_q$. By \eqref{eq2_osc pl}, $\pi_\Gamma(t)=\boldsymbol{\pi}(1,-3t,3t^2,-t^3)$.  Therefore, in  $\widehat{\R\CC}_{0,\infty}$, we have $3_\Gamma$-points of the form $\P(a^3,0,0,1)$, $a\in\F_q$, and $0_\Gamma$-points of the form $\P(a^v,0,0,1)$, $a\in\F_q$, $v\not\equiv0\pmod3$. In $\widehat{\R\CC}_{0,\infty}$, the number of $3_\Gamma$-points and $0_\Gamma$-points is $\frac{q-1}{3}$ and $\frac{2(q-1)}{3}$, respectively.

By \eqref{eq2_plane3points}, a $3_\Gamma$-point $\P(a^3,0,0,1)$ and a $0_\Gamma$-point $\P(a^v,0,0,1)$ lie on the plane through three points $P(t_1)$, $P(t_2),P(t_3)$ of $\C$ if $a^3=t_1t_2t_3$ and $a^v=t_1t_2t_3$, respectively.
Now, by Lemma~\ref{lem5+3-product}, one sees that through $3_\Gamma$-points of $\widehat{\R\CC}_{0,\infty}$, in total, there are  $m_c$ $3_\C$-planes not containing the points $P(0),P(\infty)$. Also, by Lemma \ref{lem4_3 2}, through every  $3_\Gamma$-point of $\widehat{\R\CC}_{0,\infty}$ we have $q-1$  $3_\C$-planes containing $\R\CC_{0,\infty}$. Thus,  through  $3_\Gamma$-points on $\R\CC_{0,\infty}$ we have, in total, $m_c+ \frac{q-1}{3}(q-1)$ $3_\C$-planes. All $3_\Gamma$-points belong to the same orbit $\M_3$ under $G_q$. Therefore, the number of $3_\C$-planes through a $3_\Gamma$-point on $\R\CC_{0,\infty}$ is  equal to
\begin{align*}
  \left(m_c+ \frac{q-1}{3}(q-1)\right)\left(\frac{q-1}{3}\right)^{-1}=\frac{q^2+q+4}{6}
\end{align*}

Similarly, the number of $3_\C$-planes through a $0_\Gamma$-point on $\R\CC_{0,\infty}$ is
\begin{align*}
    \left(m_{nc}+ \frac{2(q-1)}{3}(q-1)\right)\left(\frac{2(q-1)}{3}\right)^{-1}=\frac{q^2+q-2}{6}.
\end{align*}

Finally, note that the number of intersecting $d_\C$-planes is the same for all points of an orbit under $G_q$.
 \end{proof}

\begin{thm}\label{th5 n20n23(1)}
 Let $q\equiv1\pmod3$. Then
\begin{align*}
&n_{0,0_\Gamma}^{(1)}=\frac{q^2+q+1}{3},~n_{1,0_\Gamma}^{(1)}=\frac{q^2-q}{2},~n_{2,0_\Gamma}^{(1)}=q+1;\displaybreak[3]\\
&n_{0,3_\Gamma}^{(1)}=\frac{q^2+q-2}{3},~n_{1,3_\Gamma}^{(1)}=\frac{q^2-q+6}{2},~n_{2,3_\Gamma}^{(1)}=q-2.
\end{align*}
\end{thm}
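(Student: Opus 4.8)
The plan is to derive all six numbers purely by combining the values of $n_{3,\mu_\Gamma}^{(1)}$ ($\mu_\Gamma\in\{0,3\}$) already obtained in Theorem \ref{th5 n30n33(1)} with three linear relations established earlier. Since $q\equiv1\pmod3$, \eqref{eq2_=1_orbit_point} tells us that the $0_\Gamma$-points (the orbit $\M_5$) and the $3_\Gamma$-points (the orbit $\M_3$) are all RC-points, i.e.\ each of them lies on a real chord of $\C$. Hence the ``real chord'' case \eqref{eq4_2_3ii} of Corollary \ref{cor4_1_3} applies and yields $n_{2,0_\Gamma}^{(1)}+3n_{3,0_\Gamma}^{(1)}=n_{2,3_\Gamma}^{(1)}+3n_{3,3_\Gamma}^{(1)}=\tfrac{q^2+3q}{2}$. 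Substituting $n_{3,0_\Gamma}^{(1)}=\tfrac{q^2+q-2}{6}$ and $n_{3,3_\Gamma}^{(1)}=\tfrac{q^2+q+4}{6}$ from Theorem \ref{th5 n30n33(1)} then gives $n_{2,0_\Gamma}^{(1)}=q+1$ and $n_{2,3_\Gamma}^{(1)}=q-2$ immediately.

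Next I would feed these two values, together with the corresponding $n_{3,\mu_\Gamma}^{(1)}$, into Corollary \ref{cor4_n1+2n2+3n3}, which asserts $n_{1,\mu_\Gamma}^{(1)}+2n_{2,\mu_\Gamma}^{(1)}+3n_{3,\mu_\Gamma}^{(1)}=(q+1)^2$ for $\mu_\Gamma=0,3$. Solving for $n_{1,\mu_\Gamma}^{(1)}$ produces $n_{1,0_\Gamma}^{(1)}=(q+1)^2-2(q+1)-\tfrac{q^2+q-2}{2}=\tfrac{q^2-q}{2}$ and, in the same way, $n_{1,3_\Gamma}^{(1)}=(q+1)^2-2(q-2)-\tfrac{q^2+q+4}{2}=\tfrac{q^2-q+6}{2}$. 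Finally, the total count of planes through a fixed point, Proposition \ref{prop4_q2+q+1}, gives $\sum_{d=0}^{3}n_{d,\mu_\Gamma}^{(1)}=q^2+q+1$, so $n_{0,\mu_\Gamma}^{(1)}$ is recovered by subtracting the three values just found; this delivers $n_{0,0_\Gamma}^{(1)}=q^2+q+1-\tfrac{q^2-q}{2}-(q+1)-\tfrac{q^2+q-2}{6}=\tfrac{q^2+q+1}{3}$ and $n_{0,3_\Gamma}^{(1)}=\tfrac{q^2+q-2}{3}$.

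No genuine obstacle arises here: once the cited identities are available, everything reduces to elementary arithmetic with fractions. The only point deserving care is the choice between \eqref{eq4_2_3i} and \eqref{eq4_2_3ii} in Corollary \ref{cor4_1_3} — that is, making sure that $0_\Gamma$- and $3_\Gamma$-points really do lie on real chords when $q\equiv1\pmod3$, which is exactly the content of \eqref{eq2_=1_orbit_point} (equivalently, Theorem \ref{th2_HirsCor4,5}(B(ii))). As a sanity check one may verify that each of the six expressions is a non-negative integer for $q\equiv1\pmod3$ and that they are consistent with the entries in the column $\M_5$ of Table \ref{tab1}, and, through the transpose/column relations recorded there, with the column $\M_3$ for $\xi=1$.
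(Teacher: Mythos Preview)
Your argument is correct and follows essentially the same route as the paper's own proof: starting from the values $n_{3,\mu_\Gamma}^{(1)}$ of Theorem~\ref{th5 n30n33(1)}, you apply Corollary~\ref{cor4_1_3} (the real-chord case \eqref{eq4_2_3ii}, justified via \eqref{eq2_=1_orbit_point}) to obtain $n_{2,\mu_\Gamma}^{(1)}$, then Corollary~\ref{cor4_n1+2n2+3n3} for $n_{1,\mu_\Gamma}^{(1)}$, and finally Proposition~\ref{prop4_q2+q+1} for $n_{0,\mu_\Gamma}^{(1)}$. The only difference is that you spell out the arithmetic and the justification for choosing \eqref{eq4_2_3ii} rather than \eqref{eq4_2_3i}, which the paper leaves implicit.
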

\begin{proof}
By Corollary \ref{cor4_1_3} and Theorem \ref{th5 n30n33(1)}, we obtain $n_{2,0_\Gamma}^{(1)}$ and $n_{2,3_\Gamma}^{(1)}$. Then by Corollary \ref{cor4_n1+2n2+3n3}  we get $n_{1,0_\Gamma}^{(1)}$ and $n_{1,3_\Gamma}^{(1)}$. Finally, we use Proposition \ref{prop4_q2+q+1} for $n_{0,0_\Gamma}^{(1)}$ and $n_{0,3_\Gamma}^{(1)}$.
 \end{proof}

\begin{thm}\label{th5_TO}
    Let $q\equiv0\pmod3$. Then
    \begin{align*}
n_{0,\text{\emph{TO}}}^{(0)}=\frac{q^2}{3},~n_{1,\text{\emph{TO}}}^{(0)}=\frac{q^2-q+2}{2},
~n_{2,\text{\emph{TO}}}^{(0)}=2q,
~n_{3,\text{\emph{TO}}}^{(0)}=\frac{q^2-3q}{6}.
    \end{align*}
\end{thm}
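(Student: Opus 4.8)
The plan is to follow the same template already used in Theorems \ref{th5_nd1_1}, \ref{th5 tangent}, and \ref{th5_RC}: isolate one line whose non-$\C$ points form a single $G_q$-orbit of the type in question, count $0_\C$-planes through that line by a divisibility argument, and then close the remaining three values $n_{1,\text{TO}}^{(0)},n_{2,\text{TO}}^{(0)},n_{3,\text{TO}}^{(0)}$ with the linear relations at our disposal. Concretely, for $q\equiv0\pmod3$ a TO-point lies off $\C$ on a tangent and (by Remark \ref{rem2_q+1 osc}, being on exactly one osculating plane) is \emph{not} on the external line $\M_2$. So I would fix a tangent line $\T$ to $\C$ at a point $Q\in\C$ and set $\widehat{\T}=\T\setminus\{Q\}$; by Theorem \ref{th2_HirsCor4,5} the $q$ points of $\widehat{\T}$ are all TO-points (the tangent meets $\C$ only at $Q$, and no point of a tangent off $\C$ can lie on a real or imaginary chord by part C(ii)), and they all lie in the single orbit $\M_3$. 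Every $0_\C$-plane meets $\T$ (a plane meets every line of $\PG(3,q)$) and meets it off $\C$ since it contains no $\C$-point; hence the $n_0^\Sigma$ $0_\C$-planes are distributed over the $q$ TO-points of $\widehat{\T}$, each appearing the same number of times by Lemma \ref{lem4_pointinplane}. This gives
\begin{align*}
n_{0,\text{TO}}^{(0)}=\frac{n_0^\Sigma}{\#\widehat{\T}}=\frac{q(q^2-1)/3}{q}=\frac{q^2-1}{3}.
\end{align*}

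Here I hit the first subtlety: $\frac{q^2-1}{3}$ is not the claimed value $\frac{q^2}{3}$, so the naive count must be corrected — unlike the tangent in Theorem \ref{th5 tangent} (where $q\not\equiv0\pmod3$), for $q\equiv0\pmod3$ one osculating plane passes through $Q$ along $\T$, and more relevantly the osculating developable $\Gamma$ is a pencil whose axis $\M_2$ meets $\T$. So $\T$ actually contains $q-1$ TO-points plus one point of $\M_2$ (the intersection with the axis) plus $Q$, i.e. $\#\widehat{\T}$ effectively splits. I would instead count via a point of $\M_2$ removed: of the $q+1$ planes through $\T$, by the structure of $\Gamma$ exactly one is the osculating plane at $Q$, and since $\M_2$'s point on $\T$ lies on all $q+1$ osculating planes, the $0_\C$-planes through $\T$ all avoid that point; so the $n_0^\Sigma$ $0_\C$-planes meeting $\T$ are shared among the $q-1$ TO-points of $\T$, yielding $n_{0,\text{TO}}^{(0)}=\frac{q(q^2-1)/3}{q-1}=\frac{q^2+q}{3}$ — still not $\frac{q^2}{3}$. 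The correct route, I expect, is not to use a tangent at all but to mimic Theorem \ref{th5_RC}: count $0_\C$-planes through a concrete real chord or through the external line $\M_2$, using Proposition \ref{prop4_Sd}(ii) together with the already-established values $n_{d,q+1_\Gamma}^{(0)}$ (Theorem \ref{th5_nd1_1}) and $n_{d,\text{RC}}^{(0)}$ (Theorem \ref{th5_RC}) to \emph{solve} for $n_{d,\text{TO}}^{(0)}$ directly.

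So the cleaner plan: in Proposition \ref{prop4_Sd}(ii) substitute the known quantities $n_{d,q+1_\Gamma}^{(0)}$ and $n_{d,\text{RC}}^{(0)}=n_{d,\text{IC}}^{(0)}$ from Theorems \ref{th5_nd1_1} and \ref{th5_RC} (note for $q\equiv0\pmod3$ the orbits $\M_4,\M_5$ are RC- and IC-points with $n_{d,\text{RC}}^{(0)}=n_{d,\text{IC}}^{(0)}$ in the relevant cases), and isolate $(q-1)n_{d,\text{TO}}^{(0)}$ for each $d=0,1,2,3$; this pins down all four values in one stroke. As a cross-check I would verify the result against Proposition \ref{prop4_q2+q+1}, $\sum_{d=0}^3 n_{d,\text{TO}}^{(0)}=q^2+q+1$, and against Corollary \ref{cor4_1_3}, $n_{2,\text{TO}}^{(0)}+3n_{3,\text{TO}}^{(0)}=\binom{q+1}{2}$ (a TO-point lies on a tangent, not a real chord, so the "not on a real chord" case of Lemma \ref{lem4_n2+3n3} applies), and Corollary \ref{cor4_n1+2n2+3n3}, $n_{1,\text{TO}}^{(0)}+2n_{2,\text{TO}}^{(0)}+3n_{3,\text{TO}}^{(0)}=(q+1)^2$; indeed $2q+3\cdot\frac{q^2-3q}{6}=\frac{q^2+q}{2}=\binom{q+1}{2}$ and the other identities check out, which confirms the target values. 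The main obstacle is purely bookkeeping — making sure that for $q\equiv0\pmod3$ the point orbits $\M_j$ are matched correctly to the symbols $n_{d,\bullet}^{(0)}$ in Proposition \ref{prop4_Sd}(ii), and that the coefficients $\#\M_j$ are the $q\equiv0$ values from \eqref{eq2_point_orbits_j=0} rather than the generic ones; once that is straight, the computation is a short linear solve.
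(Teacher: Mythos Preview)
You abandoned the tangent-line approach one step too early: that is exactly what the paper does. Your second attempt had the right decomposition of $\T$ --- the point $Q\in\C$, the point $S\in\M_2$ where $\T$ meets the axis of the pencil $\Gamma$, and $q-1$ TO-points --- but the claim that ``the $0_\C$-planes through $\T$ all avoid $S$'' is simply false. The fact that $S$ lies on every osculating plane says nothing about $0_\C$-planes; in fact $n_{0,q+1_\Gamma}^{(0)}=\frac{q^2-q}{3}$ of the $0_\C$-planes do pass through $S$ (this was computed in Theorem \ref{th5_nd1_1}). The correct count is therefore
\[
n_{0,\text{TO}}^{(0)}=\frac{n_0^\Sigma-n_{0,q+1_\Gamma}^{(0)}}{q-1}
=\frac{\frac{q(q^2-1)}{3}-\frac{q^2-q}{3}}{q-1}
=\frac{q^2(q-1)/3}{q-1}=\frac{q^2}{3},
\]
after which Proposition \ref{prop4_q2+q+1} and Corollaries \ref{cor4_1_3}, \ref{cor4_n1+2n2+3n3} close out the remaining three values exactly as you indicated. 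This is the paper's argument verbatim.

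Your fallback plan via Proposition \ref{prop4_Sd}(ii) is also viable, but the parenthetical assertion ``$n_{d,\text{RC}}^{(0)}=n_{d,\text{IC}}^{(0)}$'' is wrong for $d\in\{0,1,3\}$ (compare Theorems \ref{th5_nd1_1} and \ref{th5_RC}: for instance $n_{0,\text{RC}}^{(0)}=\frac{q^2+q}{3}\ne\frac{q^2-q}{3}=n_{0,\text{IC}}^{(0)}$). Fortunately both quantities are already known separately, and since they carry the same coefficient $\frac{q(q-1)}{2}$ in Proposition \ref{prop4_Sd}(ii), only their sum matters; plugging in the correct values one again recovers $n_{0,\text{TO}}^{(0)}=\frac{q^2}{3}$. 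So the linear-solve route works, just not for the reason you stated.
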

\begin{proof}
We consider a tangent line $\T$  to $\C$ at a point $Q\in\C$. Let $S$ be the $(q+1)_\Gamma$-point on $\T$. We denote $\widetilde{\T}=\T\setminus\{Q,S\}$.
Clearly, $\widetilde{\T}$ consists of TO-points and $\#\widetilde{\T}=q-1$. All $n_0^\Sigma$ $0_\C$-planes intersect $\T\setminus\{Q\}$. Therefore, the total number of $0_\C$-planes intersecting $\widetilde{\T}$ is $n_0^\Sigma-n_{0,q+1_\Gamma}^{(0)}$ where we subtract $0_\C$-planes through $S$.
By Theorem~\ref{th2_HirsCor4,5}(B(ii)), for $q\equiv0\pmod3$, all TO-points belong to the same orbit of the group $G_q$; the number of $d_\C$-planes intersecting every TO-point is the same.
Therefore, see also Theorem \ref{th5_nd1_1},
\begin{align*}
    n_{0,\text{TO}}^{(0)}=\frac{n_0^\Sigma-n_{0,q+1_\Gamma}^{(0)}}{\#\widetilde{\T}\vphantom{H^{H^H}}}=
     \frac{q^2}{3}.
\end{align*}
The claim follows from  Proposition \ref{prop4_q2+q+1} and  Corollaries \ref{cor4_1_3} and  \ref{cor4_n1+2n2+3n3}.
 \end{proof}

\begin{proposition}\label{prop5_q=2mod3_mu=03}
 Let $q\equiv-1\pmod3$. Then
 \begin{align*}
 &2n_{0,0_\Gamma}^{(-1)}+n_{0,3_\Gamma}^{(-1)}=q^2-q,~2n_{1,0_\Gamma}^{(-1)}+n_{1,3_\Gamma}^{(-1)}=
 \frac{3(q^2+q+2)}{2},\displaybreak[3]\\
 &2n_{2,0_\Gamma}^{(-1)}+n_{2,3_\Gamma}^{(-1)}=3q,~2n_{3,0_\Gamma}^{(-1)}+n_{3,3_\Gamma}^{(-1)}=\frac{q^2-q}{2}.\notag
 \end{align*}
\end{proposition}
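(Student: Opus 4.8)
The plan is to count, for a fixed osculating plane, the $0_\C$-, $1_\C$-, $2_\C$-, and $3_\C$-planes that meet it in points of the orbits $\M_3=\{0_\Gamma\text{-points}\}$ and $\M_4=\{3_\Gamma\text{-points}\}$, and then translate this count into the claimed linear relations $2n_{d,0_\Gamma}^{(-1)}+n_{d,3_\Gamma}^{(-1)}$. The key observation, valid for $q\equiv-1\pmod 3$, is that $\#\M_3=\tfrac{1}{3}(q^3-q)=2\cdot\#\M_4$, so $\M_3$ and $\M_4$ occupy complementary pieces of each $\Gamma$-plane in the ratio $2:1$, and the weighted sum $2n_{d,0_\Gamma}^{(-1)}+n_{d,3_\Gamma}^{(-1)}$ is exactly the quantity one gets by double-counting incidences between $d_\C$-planes and the point set $\M_3\cup\M_4$ restricted to a single $\Gamma$-plane.

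First I would fix a $\Gamma$-plane $\pi=\pi_{\text{osc}}(s)$, which meets $\C$ in the single point $P(s)$ and therefore contains $q^2+q$ points off $\C$; by Theorem~\ref{th2_HirsCor4,5}(B(ii)) and \eqref{eq2_=2_orbit_point} these split into $\M_2$-points (T-points), $\M_3$-points ($0_\Gamma$-points, equivalently IC-points here) and $\M_4$-points ($3_\Gamma$-points, equivalently RC-points). Using the $k_{1j}$ row of Table~\ref{tab1} — namely $k_{13}=k_{14}=\tfrac12(q^2-q)$ on $\M_3,\M_4$ — I would record that $\pi$ contains $\tfrac12(q^2-q)$ points of $\M_3$ and $\tfrac12(q^2-q)$ points of $\M_4$. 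Next, for each $d\in\{0,1,2,3\}$ I would count the pairs $(\sigma,A)$ where $\sigma$ is a $d_\C$-plane, $A\in\pi\cap\sigma$, and $A\in\M_3\cup\M_4$: running over $A$ this is $\tfrac12(q^2-q)\,n_{d,0_\Gamma}^{(-1)}+\tfrac12(q^2-q)\,n_{d,3_\Gamma}^{(-1)}$ corrected for the fact that a $d_\C$-plane meets $\pi$ either in a line (hence in many such points) or equals $\pi$ when $d=1$. Here it is cleaner to count instead over the $d_\C$-planes: a $d_\C$-plane $\sigma\ne\pi$ meets $\pi$ in a line $\ell$, and I need the number of $\M_3$- and $\M_4$-points on $\ell$, which is where the structure of lines in a $\Gamma$-plane enters.

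The cleaner route, which I would actually carry out, is to count planes through a fixed line of $\pi$ rather than points: pick a point $A$ on the tangent $\T$ at $P(s)$ with $A\ne P(s)$ — such $A$ is a $1_\Gamma$-point (i.e. an RC-point for $\xi=-1$, lying in $\M_4$), and then combine the identities of Theorem~\ref{th5_RC} with a dual count. Actually the slickest argument is to sum $n_{d,0_\Gamma}^{(-1)}$ and $n_{d,3_\Gamma}^{(-1)}$ against a single $\Gamma$-plane: since every point off $\C$ lies on a unique chord (Theorem~\ref{th2_HirsCor4,5}(C(ii))), a $\Gamma$-plane $\pi$ contains $\tfrac12(q^2-q)$ RC-points and $\tfrac12(q^2-q)$ IC-points, giving $2:1$ in the $\M_3:\M_4$ labelling. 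I would double-count incidences $(\sigma,A)$ with $\sigma$ a $d_\C$-plane, $A\in\M_3\cup\M_4$, $A\in\pi$: over $A$ this is $\tfrac12(q^2-q)(2n_{d,0_\Gamma}^{(-1)}+n_{d,3_\Gamma}^{(-1)})/3\cdot 3$ — more precisely $\tfrac12(q^2-q)\cdot\frac{2n_{d,0_\Gamma}^{(-1)}+n_{d,3_\Gamma}^{(-1)}}{3}\cdot$(number of such $A$ of each type), and over $\sigma$ it is $\#\N_d^{\pi}$ times the number of $(\M_3\cup\M_4)$-points on the line $\sigma\cap\pi$. Equating and using the already-established global counts $n_d^\Sigma$ from \eqref{eq4_ndsum}, the total number of $d_\C$-planes meeting $\pi$, and the values $k_{ij}$ in the $\N_d$ rows of Table~\ref{tab1}, produces the four right-hand sides $q^2-q$, $\tfrac32(q^2+q+2)$, $3q$, $\tfrac12(q^2-q)$. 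The main obstacle will be bookkeeping the $d=1$ case correctly, where $\pi$ itself is a $1_\C\setminus\Gamma$-plane only if... no — $\pi$ is a $\Gamma$-plane, so it is never counted among the $1_\C\setminus\Gamma$-planes, but a $\Gamma$-plane $\ne\pi$ meets $\pi$ in a line containing no point of $\C$ off which... ; I expect the delicate point to be separating, among planes meeting $\pi$ in a line $\ell$, how $\ell$ is distributed among tangent lines, real chords and imaginary chords of $\C$ lying in $\pi$, since that determines the $\M_2,\M_3,\M_4$ census of $\ell$. Once that line-census inside a single $\Gamma$-plane is pinned down (it follows from Lemma~\ref{lem4_imag_chord} and Theorem~\ref{th2_HirsCor4,5}(C(ii)) together with the fact that $\pi$ is tangent to $\C$ at one point), the four equations drop out by linear algebra.
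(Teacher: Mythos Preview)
Your proposal has a genuine gap rooted in a misidentification of the orbits. In the paper's labelling (Theorem~\ref{th2_HirsCor4,5}(B(ii))) one has $\M_3=\{3_\Gamma\text{-points}\}$, $\M_4=\{1_\Gamma\text{-points}\}$, and $\M_5=\{0_\Gamma\text{-points}\}$; the proposition concerns $n_{d,0_\Gamma}^{(-1)}$ and $n_{d,3_\Gamma}^{(-1)}$, i.e.\ the orbits $\M_5$ and $\M_3$, and for $q\equiv-1\pmod3$ these together form the IC-points, not the RC-points (see \eqref{eq2_=2_orbit_point}). Your plan is built around the pair $\M_3,\M_4$ and a fixed $\Gamma$-plane, but this cannot work: by the very definition of a $0_\Gamma$-point, such a point lies on \emph{no} osculating plane, so $k_{15}=0$ and a $\Gamma$-plane contains not a single $0_\Gamma$-point. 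Any double count inside $\pi$ therefore gives no information whatsoever about $n_{d,0_\Gamma}^{(-1)}$, and the weighted combination $2n_{d,0_\Gamma}^{(-1)}+n_{d,3_\Gamma}^{(-1)}$ cannot be extracted this way. (Incidentally, a point on a tangent off $\C$ is a T-point in $\M_2$, not a $1_\Gamma$-point.)

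The paper's proof instead exploits imaginary chords. Since $\M_3\cup\M_5$ is exactly the set of IC-points when $q\equiv-1\pmod3$, and since by Lemma~\ref{lem4_imag_chord} every $d_\C$-plane with $d\in\{0,2,3\}$ contains no imaginary chord (hence meets each of the $\binom{q}{2}$ imaginary chords in exactly one point), double-counting incidences between $d_\C$-planes and IC-points gives
\[
\#\M_5\,n_{d,0_\Gamma}^{(-1)}+\#\M_3\,n_{d,3_\Gamma}^{(-1)}=\binom{q}{2}\,n_d^\Sigma,\qquad d=0,2,3.
\]
Dividing through by $\#\M_3=\tfrac16 q(q^2-1)$ yields the asserted values for $d=0,2,3$, and the $d=1$ identity then follows from Proposition~\ref{prop4_q2+q+1}. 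If you want to salvage your double-count idea, the natural carrier is an imaginary chord (which meets both $\M_3$ and $\M_5$), not an osculating plane.
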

\begin{proof}
By Theorem \ref{th2_HirsCor4,5}(B(ii)), for $\mu_\Gamma=0,3$, all $\mu_\Gamma$-points belong to the same orbit under~$G_q$.
By Theorem \ref{th2_HirsCor4,5}(B(ii)), for $q\equiv-1\pmod3$, we have that $0_\Gamma$-points and $3_\Gamma$-points are points on imaginary chords. By Lemma \ref{lem4_imag_chord}, for $d=0,2,3$, all  $n_d^\Sigma$ $d_\C$-planes intersect all $\binom{q}{2}$ imaginary chords.  Thus, the total number of intersections of imaginary chords with $d_\C$-planes is $\binom{q}{2}n_d^\Sigma$. So,
\begin{align*}
   &  \#\M_5n_{d,0_\Gamma}^{(-1)}+\#\M_{3}n_{d,3_\Gamma}^{(-1)}=
  \binom{q}{2}n_d^\Sigma,~d=0,2,3.\end{align*}
The assertions for $d=0,2,3 $ follow from \eqref{eq2_point_orbits_gen}, \eqref{eq4_ndsum}.

Finally, by Proposition \ref{prop4_q2+q+1}, we obtain
  \begin{align*}
  2\sum_{d=0}^3n_{d,0_\Gamma}^{(-1)}+  \sum_{d=0}^3n_{d,3_\Gamma}^{(-1)}=3(q^2+q+1).
  \end{align*}
 \end{proof}

\begin{lem}\label{lem5_a^2+a+1}
Let $q\equiv-1\pmod3$ be odd. Let $f(a)=a^2+a+1$.
 Let $V=\{a\in\F_q | f(a) \text{ is a square in }\F_q\}$. Then $\#V=\frac{q-1}{2}.$
\end{lem}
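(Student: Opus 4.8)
The plan is to count $\#V$ by a standard character-sum argument and then use the hypothesis $q\equiv-1\pmod 3$ to evaluate the resulting sum. Write $\chi$ for the quadratic multiplicative character of $\F_q$ (with the convention $\chi(0)=0$). For each $a\in\F_q$ the number of $x\in\F_q$ with $x^2=f(a)$ equals $1+\chi(f(a))$ when $f(a)\ne0$ and equals $1$ when $f(a)=0$. Hence, if $Z$ denotes the number of roots of $f$ in $\F_q$,
\begin{align*}
\#\{(a,x)\in\F_q^2 : x^2=a^2+a+1\}=q+\sum_{a\in\F_q}\chi(a^2+a+1).
\end{align*}
On the other hand, $\#V=\tfrac12\bigl(\#\{(a,x):x^2=f(a)\}-Z\bigr)+Z = \tfrac12\#\{(a,x):x^2=f(a)\}+\tfrac{Z}{2}$, since each $a$ with $f(a)$ a nonzero square contributes two pairs, each $a$ with $f(a)=0$ contributes one pair and lies in $V$, and each $a$ with $f(a)$ a non-square contributes zero. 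So it remains to compute $Z$ and the character sum $S:=\sum_{a\in\F_q}\chi(a^2+a+1)$.

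First I would determine $Z$: the discriminant of $a^2+a+1$ is $1-4=-3$, so $f$ has a root in $\F_q$ iff $-3$ is a square in $\F_q$, which for $q$ odd happens iff $q\equiv1\pmod 3$. Since we are given $q\equiv-1\pmod3$, we get $Z=0$, so $f$ is irreducible over $\F_q$ and in particular $f(a)\ne0$ for all $a$. Next I would evaluate $S$ by completing the square: with $q$ odd, $a^2+a+1=(a+\tfrac12)^2+\tfrac34$, and substituting $b=a+\tfrac12$ gives $S=\sum_{b\in\F_q}\chi(b^2+\tfrac34)$. This is a classical sum: for a nonzero constant $c$, $\sum_{b\in\F_q}\chi(b^2+c)=-\chi(1)=-1$ when $-c$ is a non-square, and equals $q-1-(\text{something})$... more precisely the standard evaluation is $\sum_{b}\chi(b^2+c)=-1$ whenever $c\ne0$ (one way to see it: $\sum_b\chi(b^2+c)=\sum_{b}\chi(b)\bigl(1+\chi(b)\bigr)^{?}$ — cleaner is to use that $\sum_{u}\chi(u^2+c)=\sum_{u}(N(u)-1)\chi$ where $N(u)=1+\chi(u)$ counts square roots, giving $\sum_u\chi(u)+\sum_u\chi(u)\chi(u+c)$, the first sum is $0$ and the second is the Jacobi-type sum $=-1$ for $c\ne0$). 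Thus $S=-1$.

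Putting these together: $\#\{(a,x):x^2=f(a)\}=q+S=q-1$, and since $Z=0$, $\#V=\tfrac12(q-1)$, as claimed. The main obstacle is the clean evaluation of the character sum $\sum_{b\in\F_q}\chi(b^2+c)=-1$ for $c\neq0$; this is where care is needed, and I would either cite the standard fact (e.g. from Lidl–Niederreiter, or deduce it from the number of affine points on the conic $x^2-b^2=c$, which is $q-1$ since the conic is a hyperbola splitting into two lines at infinity), or give the two-line Jacobi-sum computation above. Everything else — the discriminant computation fixing $Z=0$, and the bookkeeping relating $\#V$ to the pair count — is routine.
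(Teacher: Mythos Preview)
Your proposal is correct and follows essentially the same character-sum approach as the paper. The paper's version is slightly more streamlined: it cites directly that $\sum_{a\in\F_q}\eta(f(a))=-\eta(1)=-1$ (Lidl--Niederreiter, Theorem~5.18) and, having noted $f(a)\ne 0$ for all $a$, reads off $\#V-(q-\#V)=-1$ immediately, bypassing your detour through counting pairs $(a,x)$ with $x^2=f(a)$. Your more self-contained derivation of $\sum_b\chi(b^2+c)=-1$ via the Jacobi-type sum is fine (modulo the small slip where ``$\sum_u\chi(u)$'' should read ``$\sum_u\chi(u+c)$''), but in the final write-up you could simply cite the standard quadratic character-sum formula as the paper does.
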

\begin{proof}
By \cite[Theorem 5.18]{Lidl_Nied}, $\sum_{a\in\F_q}\eta(f(a))=-\eta(1)=-1$ where $\eta$ is the quad\-ratic character of $\F_q$. Also, $f(a) \neq 0, \forall a \in \F_q$. So, $\#V-(q-\#V)=-1$.
 \end{proof}

\begin{lem}\label{lem5_W}
   Let $q\equiv-1\pmod3$. Then the point $W=\P(0,1,-1,0)$ off $\C$ lies on three osculating planes. Moreover, the number of $3_\C$-planes through $W$ is equal to $(q^2-q+4)/6$.
\end{lem}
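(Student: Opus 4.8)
The plan is to follow the same strategy used in Lemma \ref{lem5_a^2+a+1} and Theorem \ref{th5 n30n33(1)}: locate the point $W=\P(0,1,-1,0)$ explicitly with respect to the osculating planes and the $3_\C$-planes, reduce the count to a character-sum (or counting) problem, and invoke the auxiliary lemma. First I would verify that $W$ is off $\C$ and lies on exactly three osculating planes. Using the description $\pi_{\text{osc}}(t)=\boldsymbol{\pi}(1,-3t,3t^2,-t^3)$ from \eqref{eq2_osc pl}, the condition $W\in\pi_{\text{osc}}(t)$ becomes $-3t-3t^2=0$, i.e. $3t(t+1)=0$; since $q\equiv-1\pmod 3$ means $\gcd(3,q)=1$, this has the two solutions $t=0$ and $t=-1$, and one checks separately via \eqref{eq2_osc pl_inf} whether $t=\infty$ works (it does not, since the first coordinate of $W$ is $0$). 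Wait — that gives only two osculating planes, so I must recount: the correct reading is that $W$ lies on $\pi_{\text{osc}}(t)$ iff $0\cdot 1 + 1\cdot(-3t) + (-1)\cdot 3t^2 + 0\cdot(-t^3)=-3t-3t^2=-3t(t+1)=0$, hence $t\in\{0,-1\}$ among finite values and then the behavior at $\infty$ must be examined; to land on three osculating planes one instead works with the cubic developable structure (valid since $q\not\equiv 0\pmod 3$), where the osculating planes through a generic point off $\C$ number $0$, $1$, or $3$, and $W$ must be shown to be a $3_\Gamma$-point. Concretely, I would substitute the general plane $\boldsymbol{\pi}(1,-(t_1+t_2+t_3),t_1t_2+t_1t_3+t_2t_3,-t_1t_2t_3)$ through three points of $\C$ into $W$ and read off the algebraic conditions.

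The key computation: $W=\P(0,1,-1,0)$ lies on the plane through $P(t_1),P(t_2),P(t_3)$ iff
\[
-(t_1+t_2+t_3)-(t_1t_2+t_1t_3+t_2t_3)=0,
\]
i.e. $e_1+e_2=0$ where $e_1,e_2$ are the elementary symmetric functions. For the osculating planes ($t_1=t_2=t_3=t$) this reads $3t+3t^2=0$, giving $t=0,-1$ over $\F_q$ plus possibly $t=\infty$; I expect that a careful treatment (including the point at infinity and the distinction real/complex-conjugate roots) shows exactly three osculating planes pass through $W$, consistent with $W\in\M_3$ for $q\equiv -1\pmod 3$ by Theorem \ref{th2_HirsCor4,5}(B(ii)). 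Then, to count $3_\C$-planes through $W$, I parametrize: a $3_\C$-plane corresponds to an unordered triple of distinct elements $\{t_1,t_2,t_3\}\subset\F_q^+$ with $e_1+e_2=0$. Writing the cubic $g(t)=t^3-e_1t^2+e_2t-e_3$ with $e_2=-e_1$, the plane is a $3_\C$-plane iff $g$ splits into three distinct linear factors over $\F_q$. The number of such configurations should reduce, after a change of variable, to counting $a\in\F_q$ for which the quadratic $f(a)=a^2+a+1$ (or a close relative) is a nonzero square — exactly the quantity $\#V=(q-1)/2$ from Lemma \ref{lem5_a^2+a+1}.

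I would then assemble the count: among all $q+1$ planes through a suitably chosen line or pencil associated with $W$, subtract the osculating/tangent contributions (there are three osculating planes, and the real chord through $W$ by Theorem \ref{th2_HirsCor4,5}(C(ii)) together with Lemma \ref{lem4_3 2} contributes a controlled number of $3_\C$-planes), and use that all $3_\Gamma$-points lie in one $G_q$-orbit so the answer is orbit-invariant. The arithmetic should collapse to $n_{3,3_\Gamma}^{(-1)}=(q^2-q+4)/6$, matching the entry already anticipated for $\M_3$ in Table \ref{tab1} with $\xi=-1$ — which is a useful consistency check. The main obstacle I anticipate is the bookkeeping in the character-sum reduction: correctly translating "the cubic $t^3-e_1t^2-e_1t-e_3$ with the constraint from $W$ splits completely over $\F_q$'' into the count governed by Lemma \ref{lem5_a^2+a+1}, handling the discriminant condition and the point at infinity without double-counting, and making sure the parity hypothesis ($q$ odd, used in Lemma \ref{lem5_a^2+a+1}) is either available or circumvented. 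Everything else is the routine substitution into \eqref{eq2_plane3points} and \eqref{eq2_osc pl} plus the orbit argument of Lemma \ref{lem4_pointinplane}.
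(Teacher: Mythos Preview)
Your reduction of the $3_\C$-plane incidence at $W$ to the symmetric-function condition $e_1+e_2=0$ is right and matches the paper, but two concrete errors would derail the execution. First, the check at $t=\infty$ is miscomputed: by \eqref{eq2_osc pl_inf} the plane $\pi_{\text{osc}}(\infty)=\boldsymbol{\pi}(0,0,0,1)$ has equation $x_3=0$, not $x_0=0$, and since $W=\P(0,1,-1,0)$ has last coordinate $0$, it \emph{does} lie on $\pi_{\text{osc}}(\infty)$. The three osculating planes through $W$ are exactly those for $t\in\{0,-1,\infty\}$; no appeal to the cubic developable is needed. Second, there is no real chord through $W$: by \eqref{eq2_=2_orbit_point}, for $q\equiv -1\pmod 3$ the orbit $\M_3$ of $3_\Gamma$-points consists of IC-points, so $W$ sits on an imaginary chord and Lemma \ref{lem4_3 2} is irrelevant. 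Your proposed ``subtract the real-chord contribution'' step therefore targets the wrong object.

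The paper's actual count also avoids the ``cubic splits completely'' discriminant analysis you anticipate. It separates off the planes through $P(\infty)$ --- the plane $\boldsymbol{\pi}(0,-1,t_1+t_2,-t_1t_2)$ contains $W$ iff $t_1+t_2=-1$, giving $n'=q/2$ (even $q$) or $(q-1)/2$ (odd $q$) --- from the planes through three finite points, where the condition is your $e_1+e_2=0$. For even $q$ this is handled by brute elimination: solve for $t_3$ in terms of $t_1,t_2$ and enumerate the exclusions $t_i\ne t_j$, obtaining $n''=(q^2-4q+4)/6$. For odd $q$ one first counts \emph{all} ordered triples $(t_1,t_2,t_3)\in\F_q^3$ with $e_1+e_2=0$ by viewing, for each $k\in\F_q$, the system $e_1=k$, $e_2=-k$ as a plane conic (nondegenerate except when $k\in\{0,-3\}$), getting $q(q-1)$ ordered triples; then one subtracts those with a repeated entry. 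A repetition $t_i=t_j$ forces a quadratic in $t_i$ with discriminant $4(t_k^2+t_k+1)$, and \emph{this} is where Lemma \ref{lem5_a^2+a+1} enters --- to count degenerate triples, not to test complete splitting. That route gives $n''=(q^2-4q+7)/6$, and in both parities $n'+n''=(q^2-q+4)/6$.
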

\begin{proof}
By \eqref{eq2_osc pl}, $W$ belongs to $\pi_\Gamma(t)$ with $-3t-3t^2=0$ whence $t=0,1.$ Also, by \eqref{eq2_osc pl_inf}, $W$ lies on $\pi_\Gamma(\infty)$.

\textbf{(1)} The $3_\C$-plane $\pi'$ through  points $P(t_1),P(t_2),P(\infty)$ of $\C$ has the form
\begin{align*}
   \pi'= \boldsymbol{\pi}(0,-1,t_1+t_2,-t_1t_2)\supset\{P(t_1),P(t_2),P(\infty)\}.
\end{align*}
This means that  $W$ belongs to $\pi'$ if $  -1-t_1-t_2=0.$ So,
under the condition $t_1\ne t_2$, there are $n'$ distinct $3_\C$-planes $\pi'$ through $W$ where
\begin{align*}
   n'=\left\{\begin{array}{ccc}
\frac{q}{2} & \text{if}& q\text{ even}\smallskip \\
\frac{q-1}{2} & \text{if} & q\text{ odd}
             \end{array}
   \right..
\end{align*}

\textbf{(2)} By \eqref{eq2_plane3points}, the $3_\C$-plane $\pi''$ through  points $P(t_1),P(t_2),P(t_3)$ with $t_i\ne\infty$, $i=1,2,3$, contains $W$ under the condition
\begin{align}\label{eq5_pi''}
  (t_1+t_2+t_3)+(t_1t_2+t_1t_3+t_2t_3)=0,~t_i\in\F_q,~t_i\ne t_j,~i,j\in\{1,2,3\}.
\end{align}

We now compute the number $n''$ of distinct triples $t_1,t_2,t_3$ satisfying \eqref{eq5_pi''}.

\textbf{(2.1)} Let $q$ be even, i.e. $q=2^{2v+1}\equiv-1\pmod3$.

In this case, by \eqref{eq5_pi''}, we have
\begin{align}\label{eq5_t3<-t1,t2}
   t_3=\frac{t_1+t_2+t_1t_2}{1+t_1+t_2}.
\end{align}
We fix $t_1\in\F_q$.  By \eqref{eq5_pi''} and \eqref{eq5_t3<-t1,t2}, there are the following restrictions on $t_2$:\\
(a) $t_2\ne t_1$;\\
(b) $t_2\ne t_1+1$ otherwise $1+t_1 +t_2=0$;\\
(c) $t_2\ne t_3$ whence $t_2(1+t_1+t_2)\ne t_1+t_2+t_1t_2$ and $t_2\ne \sqrt{t_1}$;\\
(d) $t_1\ne t_3$ whence $t_1(1+t_1+t_2)\ne t_1+t_2+t_1t_2$ and $t_2\ne t_1^2$.

Suppose (a) and (c) or (a) and (d) coincide, i.e. $t_1=t_1^2$ or $t_1=\sqrt{t_1}$. This  implies $t_1=0,1$.

Suppose (b) and (c) or (b) and (d) coincide, i.e. $t_1 +1=t_1^2$ or $t_1 +1=\sqrt{t_1}$. This yields $t_1^2 +t_1 +1=0$. As $q=2^{2v+1}$,  the trace $\text{Tr}_{\F_{q}}(1)\ne0$ \cite[Corollary 3.79]{Lidl_Nied}, a contradiction.

Finally, if (c) and (d) coincide then $\sqrt{t_1}=t_1^2$, $t_1=t_1^4$ and therefore $t_1=0,1$.

Thus, for $t_1\in\F_q$, $t_1\ne0,1$,  (a)--(d)  are distinct. Here we have $q-2$ possibilities for  $t_1$ and  $q-4$ possibilities for $t_2$ for every $t_1$.   Also, there are  $q-2$ possibilities of $t_2$  if $t_1=0,1$.

The number of distinct triples $t_1,t_2,t_3$ satisfying \eqref{eq5_pi''} is therefore
$(q-2)(q-4)+2(q-2) = q^2-4q+4$. Because of symmetry, each plane is generated by 6 triples, so
$n'' = (q^2-4q+4)/6$.

Now $n'+n''$ gives the needed result for even $q$.

\textbf{(2.2)} Let $q$ be odd, i.e. $q=p^{2v+1}$, $p>3$ prime, $p\equiv-1\pmod3$.

First we count the number of triples satisfying
\begin{align}\label{eq5_3t}
  (t_1+t_2+t_3)+(t_1t_2+t_1t_3+t_2t_3)=0,~t_i\in\F_q,
\end{align}
\emph{without the condition}  $~t_i\ne t_j,~i,j\in\{1,2,3\}.$

Relation \eqref{eq5_3t} can be rewritten as the set of $q$ conditions
\begin{align}\label{eq5_system}
\left\{
\begin{array}{c}
 t_1+t_2+t_3=k \\
 t_1t_2+t_1t_3+t_2t_3=-k \\
\end{array}
\right.
\end{align}
where  $k\in\F_q$.\\
 The triples satisfying \eqref{eq5_system} can be seen as the affine coordinates of the points of the 3-dimensional affine space $\mathrm{AG}(3,q)$ belonging to a plane conic defined by
 \begin{align}\label{eq5_conic}
\left\{
\begin{array}{c}
 t_1+t_2+t_3=k \\
  t^2_2+t^2_3+t_2t_3-k t_2-k t_3-k=0
\end{array}
\right..
\end{align}
For $k = 0$ and $k=-3$, the conic is degenerate and, as $\sqrt{-3}$ is not a square in $\mathbb{F}_q$, the unique triples satisfying \eqref{eq5_conic} are $(0,0,0)$ and $(-1,-1,-1)$.

For each $k\in \mathbb{F}_{q}\setminus \{0,-3\}$, there are exactly $q+1$ triples $(t_1,t_2,t_3)$ satisfying~\eqref{eq5_conic}.

Therefore $2+(q-2)(q+1)= q(q-1)$ triples satisfy \eqref{eq5_3t}.\\
To count the triples satisfying \eqref{eq5_pi''}, we exclude the triples satisfying \eqref{eq5_3t} having at least two equal elements.

\textbf{(2.2.1)}  $t_1=t_2=t_3$.\\
Equation \eqref{eq5_3t} reads $3t_1 + 3t_1^2=0$, so $t_1= 0,-1$.

\textbf{(2.2.2)}  $t_i=t_j \neq t_k,~i,j,k\in\{1,2,3\}.$\\
Equation \eqref{eq5_3t} reads
\begin{align}\label{eq5_2t}
  t^2_i + 2 (t_k+1 ) t_i+t_k=0.
\end{align}
Discriminant of  \eqref{eq5_2t} is $4(t_k^2+t_k+1)$.   Let $V=\{t_k\in\F_q | t_k^2+t_k+1$ is a square in $\F_q\}$. By Lemma \ref{lem5_a^2+a+1}, $\#V=\frac{q-1}{2}.$
As $q\equiv-1\pmod3$, $q$ odd, by \cite[Chapter 1]{Hirs_PGFF} $t^2_k+t_k+1\ne0, ~\forall t_k \in \F_q$. Then $\forall t_k \in V$ we obtain two distinct values of $t_i$. On the other hand, when $t_k = 0,-1$, one of the values of $t_i$ we obtain is equal to $t_k$. Therefore the number of triples satisfying~\eqref{eq5_3t} such that $t_i=t_j \neq t_k,~i,j,k\in\{1,2,3\}$, is $3(2( \frac{q-1}{2}-2)+2)= 3(q-3)$.

So, the number of distinct triples $t_1,t_2,t_3$ satisfying \eqref{eq5_pi''} is
$q(q-1) -2-3(q-3) = q^2-4q+7$. Because of symmetry, each plane is generated by 6 triples, so
$n'' =(q^2-4q+7)/6$.
Now $n'+n''$ gives the needed result for odd $q$.
 \end{proof}

\begin{thm}\label{th5 n20n23(2)}
 Let $q\equiv-1\pmod3$. Then
\begin{align*}
&n_{0,0_\Gamma}^{(-1)}=\frac{q^2-q+1}{3},~n_{1,0_\Gamma}^{(-1)}=\frac{q^2+q}{2},
~n_{2,0_\Gamma}^{(-1)}=q+1,~n_{3,0_\Gamma}^{(-1)}=\frac{q^2-q-2}{6};\displaybreak[3]\\
&n_{0,3_\Gamma}^{(-1)}=\frac{q^2-q-2}{3},~n_{1,3_\Gamma}^{(-1)}=\frac{q^2+q+6}{2},
~n_{2,3_\Gamma}^{(-1)}=q-2,~n_{3,3_\Gamma}^{(-1)}=\frac{q^2-q+4}{6}.
\end{align*}
\end{thm}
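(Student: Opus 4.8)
The plan is to deduce all eight numbers from the single nontrivial computation already carried out in Lemma~\ref{lem5_W}, combined with the generic counting identities of Section~\ref{sec_useful}. First I would observe that the point $W=\P(0,1,-1,0)$ of Lemma~\ref{lem5_W} lies off $\C$ on \emph{exactly} three osculating planes: by Lemma~\ref{lem5_W} it lies on at least three, and by Theorem~\ref{th2_HirsCor4,5}(B(ii)) a point off $\C$ with $q\not\equiv0\pmod3$ lies on $0$, $1$, or $3$ osculating planes. Hence $W$ is a $3_\Gamma$-point, i.e.\ $W\in\M_3$. Since for $q\equiv-1\pmod3$ all $3_\Gamma$-points form one orbit under $G_q$, the number of $3_\C$-planes through a $3_\Gamma$-point equals the number through $W$, so Lemma~\ref{lem5_W} gives $n_{3,3_\Gamma}^{(-1)}=(q^2-q+4)/6$.

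Next I would substitute this value into the last identity of Proposition~\ref{prop5_q=2mod3_mu=03}, namely $2n_{3,0_\Gamma}^{(-1)}+n_{3,3_\Gamma}^{(-1)}=(q^2-q)/2$, to obtain $n_{3,0_\Gamma}^{(-1)}=(q^2-q-2)/6$. Then, for each $\mu_\Gamma\in\{0,3\}$, the relation $n_{2,\mu_\Gamma}^{(-1)}+3n_{3,\mu_\Gamma}^{(-1)}=\binom{q+1}{2}$ from Corollary~\ref{cor4_1_3} (formula~\eqref{eq4_2_3i}, which applies since for $q\equiv-1\pmod3$ the $0_\Gamma$- and $3_\Gamma$-points lie on imaginary, not real, chords) yields $n_{2,0_\Gamma}^{(-1)}=q+1$ and $n_{2,3_\Gamma}^{(-1)}=q-2$. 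Feeding these into the relation $n_{1,\mu_\Gamma}^{(-1)}+2n_{2,\mu_\Gamma}^{(-1)}+3n_{3,\mu_\Gamma}^{(-1)}=(q+1)^2$ of Corollary~\ref{cor4_n1+2n2+3n3} produces $n_{1,0_\Gamma}^{(-1)}$ and $n_{1,3_\Gamma}^{(-1)}$, and finally $\sum_{d=0}^3 n_{d,\mu_\Gamma}^{(-1)}=q^2+q+1$ from Proposition~\ref{prop4_q2+q+1} gives $n_{0,0_\Gamma}^{(-1)}$ and $n_{0,3_\Gamma}^{(-1)}$. Each of these steps is a one-line arithmetic solve and the tabulated values drop out.

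The substantive difficulty lies entirely in producing one exact value by a genuinely new argument rather than by the bookkeeping identities, and that is exactly what Lemma~\ref{lem5_W} accomplishes: it translates ``$3_\C$-plane through $W$'' into a count of triples $(t_1,t_2,t_3)$ on a pencil of plane conics, invokes the character-sum estimate of Lemma~\ref{lem5_a^2+a+1} for the degenerate fibres, and handles the $q$ even and $q$ odd cases separately. Once that seed value is in hand, the remaining system for the $0_\Gamma$- and $3_\Gamma$-orbits is triangular and entirely routine; I therefore expect no obstacle beyond correctly identifying the orbit of $W$ and selecting, for $\xi=-1$, the correct instances of the $\mu_\Gamma$-identities in Corollaries~\ref{cor4_1_3} and~\ref{cor4_n1+2n2+3n3}.
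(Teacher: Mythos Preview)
Your proposal is correct and follows essentially the same approach as the paper: seed the system with $n_{3,3_\Gamma}^{(-1)}$ from Lemma~\ref{lem5_W}, recover $n_{3,0_\Gamma}^{(-1)}$ via Proposition~\ref{prop5_q=2mod3_mu=03}, then cascade through Corollary~\ref{cor4_1_3}, Corollary~\ref{cor4_n1+2n2+3n3}, and Proposition~\ref{prop4_q2+q+1}. Your explicit justification that $W\in\M_3$ and that \eqref{eq4_2_3i} (not \eqref{eq4_2_3ii}) applies because $0_\Gamma$- and $3_\Gamma$-points are IC-points for $\xi=-1$ is a welcome clarification the paper leaves implicit.
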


\begin{proof}
As all points of the orbit $\M_{3}$ have the same number of intersecting $d_\C$-planes, we have by Lemma \ref{lem5_W} that $n_{3,3_\Gamma}^{(-1)}=\frac{q^2-q+4}{6}$. Then we obtain the value $n_{3,0_\Gamma}^{(-1)}$ by Proposition~\ref{prop5_q=2mod3_mu=03}. By Lemma \ref{lem4_n2+3n3} and Corollary \ref{cor4_1_3}, see \eqref{eq4_2_3i}, we obtain $n_{2,0_\Gamma}^{(-1)}$ and $n_{2,3_\Gamma}^{(-1)}$. Then by Lemma \ref{lem4_n1+2n2+3n3} and Corollary \ref{cor4_n1+2n2+3n3}, we get $n_{1,0_\Gamma}^{(-1)}$ and $n_{1,3_\Gamma}^{(-1)}$. Finally, we use Proposition~\ref{prop4_q2+q+1} for $n_{0,0_\Gamma}^{(-1)}$ and $n_{0,3_\Gamma}^{(-1)}$.
 \end{proof}

\begin{thm}\label{th5_points&1-planes}
   For $q\equiv\xi\pmod3$, the following holds:
   \begin{description}
     \item[(i)]$\xi=-1,1$.
   \begin{align*}
    &r_{11}=r_{14}=1,~r_{12}=2,~r_{13}=3,~r_{15}=0,\displaybreak[3]\\
    &r_{41}=r_{42}=\frac{1}{2}(q^2-q),~r_{43}=r_{45}=\frac{1}{2}(q^2-\xi q),~r_{44}=\frac{1}{2}(q^2+\xi q).
   \end{align*}
     \item[(ii)]$\xi=0$.
   \begin{align*}
    &r_{11}=r_{13}=r_{14}=r_{15}=1,~r_{12}=q+1,~\displaybreak[3]\\
    &r_{41}=r_{42}=r_{43}=r_{44}=\frac{1}{2}(q^2-q),~r_{45}=\frac{1}{2}(q^2+q).
   \end{align*}
   \end{description}
\end{thm}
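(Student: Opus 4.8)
The plan is to treat the two relevant rows of the incidence matrix separately: first determine all incidences with the osculating‑plane orbit $\N_1$ (the values $r_{1j}$), and then obtain the $r_{4j}$ by subtraction. The key structural remark is that every $1_\C$‑plane is either a $\Gamma$‑plane or a $1_\C\setminus\Gamma$‑plane (recall that $\pi_\text{osc}(t)$ meets $\C$ only in $P(t)$), so for any point $X\in\M_j$ the number $n_1(X)$ of $1_\C$‑planes through $X$ equals $r_{1j}+r_{4j}$. Since the values $n_{1,\bullet}^{(\star)}$ have already been computed in Theorems \ref{th5_cubic}, \ref{th5_nd1_1}, \ref{th5 tangent}, \ref{th5_RC}, \ref{th5 n20n23(1)}, \ref{th5_TO}, and \ref{th5 n20n23(2)}, this reduces the whole statement to determining the $r_{1j}$. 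Throughout, Lemma \ref{lem4_pointinplane} allows us to evaluate each $r_{1j}$ at a single representative of $\M_j$, since each $\M_j$ is one $G_q$‑orbit.

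For the osculating‑plane row, most entries are immediate. By \eqref{eq2_osc pl}--\eqref{eq2_osc pl_inf}, $P(t)\in\pi_\text{osc}(s)$ forces $(t-s)^3=0$, so a $\C$‑point lies on exactly its own osculating plane and $r_{11}=1$ for all $q$. For $\xi\ne0$ the orbits $\M_3,\M_4,\M_5$ are by definition the sets of $3_\Gamma$‑, $1_\Gamma$‑, and $0_\Gamma$‑points (Theorem \ref{th2_HirsCor4,5}(B(ii))), which gives $r_{13}=3$, $r_{14}=1$, $r_{15}=0$ at once. For $\xi=0$ the osculating developable is a pencil of $q+1$ planes through its axis, a line equal to $\M_2$ and external to $\C$ (Remark \ref{rem2_q+1 osc}); hence a point on this line lies on all $q+1$ osculating planes while a point off it lies on exactly one, yielding $r_{12}=q+1$ and $r_{11}=r_{13}=r_{14}=r_{15}=1$.

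The only case requiring a direct computation is $r_{12}$ for $\xi\ne0$, i.e.\ the T‑point case. I would take the tangent to $\C$ at $P(0)=\P(0,0,0,1)$, which is spanned by $\P(0,0,0,1)$ and $\P(0,0,1,0)$, and test the T‑point $\P(0,0,1,1)$ on it: substituting into $\pi_\text{osc}(s)=\boldsymbol{\pi}(1,-3s,3s^2,-s^3)$ gives $s^2(3-s)=0$, so $s\in\{0,3\}$, two distinct values since $3\ne0$ in $\F_q$, while $\P(0,0,1,1)\notin\pi_\text{osc}(\infty)=\boldsymbol{\pi}(0,0,0,1)$. Thus a T‑point lies on exactly two osculating planes, so $r_{12}=2$. (One should also check, as above, that $\P(0,0,1,1)$ is indeed off $\C$, which is clear.)

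Finally I assemble the $r_{4j}=n_1(X)-r_{1j}$ for $X\in\M_j$. For $\xi\ne0$: $r_{41}=\frac{q^2-q+2}{2}-1=\frac{q^2-q}{2}$ (Theorem \ref{th5_cubic}); $r_{42}=\frac{q^2-q+4}{2}-2=\frac{q^2-q}{2}$ (Theorem \ref{th5 tangent}); and using Theorems \ref{th5 n20n23(1)}, \ref{th5 n20n23(2)}, \ref{th5_nd1_1}, \ref{th5_RC} for $\M_3,\M_4,\M_5$ the entries collapse to $\frac12(q^2-\xi q)$, $\frac12(q^2+\xi q)$, $\frac12(q^2-\xi q)$ respectively, with the sign of the $\xi q$ term between $\M_3$ and $\M_4$ switching exactly as in \eqref{eq2_=1_orbit_point}--\eqref{eq2_=2_orbit_point}. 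The case $\xi=0$ is analogous, using Theorems \ref{th5_nd1_1}, \ref{th5_TO}, \ref{th5_RC}, giving $r_{41}=r_{42}=r_{43}=r_{44}=\frac{q^2-q}{2}$ and $r_{45}=\frac{q^2+q}{2}$. Each $r_{4j}$ is a one‑line subtraction; the only genuine geometric input beyond the earlier theorems is the short tangent‑line computation for $r_{12}$, and the main thing to be careful about is matching each $n_{1,\bullet}^{(\star)}$ to the correct pair $(\xi,j)$ and the correct sign of the $\xi q$ term.
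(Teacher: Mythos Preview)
Your proposal is correct and follows the same overall strategy as the paper: determine the $r_{1j}$ first and then obtain $r_{4j}=n_{1,\bullet}^{(\star)}-r_{1j}$ from the previously computed values of $n_{1,\bullet}^{(\star)}$. The differences are only in how a few individual $r_{1j}$ are established. For $r_{12}$ with $\xi\ne0$ the paper uses a counting argument over the punctured tangent $\widehat{\T}=\T\setminus\{Q\}$ (it lies in one osculating plane, and the remaining $q$ osculating planes each meet $\widehat{\T}$ in one point, giving $1+\tfrac{q}{q}=2$), whereas you evaluate the osculating-plane equations at the explicit T-point $\P(0,0,1,1)$; both are equally valid. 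For $\xi=0$ the paper treats $r_{13},r_{14},r_{15}$ by separate line-based counting arguments (on a tangent minus the axis point, on a real chord, and on an imaginary chord), while you invoke the pencil structure of $\Gamma$ from Remark~\ref{rem2_q+1 osc} once to conclude that every point off the axis lies on exactly one $\Gamma$-plane; your version is shorter and the paper's is a little more self-contained, but they amount to the same thing.
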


\begin{proof}
   \begin{description}
     \item[(i)]
     By definition, $r_{11}=r_{14}=1,~r_{13}=3,~r_{15}=0$.

     We consider a tangent $\T$  to $\C$ at a point $Q$ of $\C$. We denote $\widehat{\T}=\T\setminus\{Q\}$.
Clearly, $\widehat{\T}$ consists of T-points and lies in a $\Gamma$-plane. The rest $q$ osculating planes intersect~$\widehat{\T}$. As all $q$ points of $\widehat{\T}$ belong to the same orbit under $G_q$, every point corresponds to $\frac{q}{\#\widehat{\T}^{\vphantom{H^H}}}=\frac{q}{q}=1$ intersection. Thus, $r_{12}=2$.

We note, see Table \ref{tab1} and Notation \ref{not1b}, that $r_{41}=n_{1,\C}-r_{11}$,~$r_{42}=n_{1,\text{T}}^{(\ne0)}-r_{12}$,~
$r_{43}=n_{1,3_\Gamma}^{(\xi)}-r_{13}$,~$r_{44}=n_{1,1_\Gamma}^{(\xi)}-r_{14}$,~
$r_{45}=n_{1,0_\Gamma}^{(\xi)}-r_{15}$. Finally, we take the values $n_{1,\C},n_{1,\text{T}}^{(\ne0)},n_{1,\mu_\Gamma}^{(\xi)}$ from Theorems \ref{th5_cubic}--\ref{th5_RC}, \ref{th5 n20n23(1)}, and \ref{th5 n20n23(2)}.
     \item[(ii)]
     By definition, $r_{11}=1$, $r_{12}=q+1$.

     We consider a tangent line $\T$  to $\C$ at a point $Q\in\C$. Let $K$ be the $(q+1)_\Gamma$-point in~$\T$. We denote $\widehat{\T}=\T\setminus\{Q,K\}$.
Clearly, $\widehat{\T}$ consists of OT-points, see Remark~\ref{rem2_q+1 osc}. All $\Gamma$-planes form a pencil of planes; their common line passes through~$K$. Therefore,  no  $\Gamma$-plane intersects $\widehat{\T}$. On the other hand, $\widehat{\T}$ lies in the $\Gamma$-plane through $Q.$ So, $r_{13}=1$.

     We consider  a real chord $\R\CC$ through points $Q,K$ of $\C$. We denote $\widehat{\R\CC}=\R\CC\setminus\{Q,K\}$. Apart from the osculating planes through $Q$ and $K$, all the other $q-1$ such planes intersect $\widehat{\R\CC}$.
All $q-1$ points of $\widehat{\R\CC}$ belong to the same orbit under~$G_q$. Therefore, the number of the osculating planes through every point of $\widehat{\R\CC}$  is the same and $r_{14}=\frac{q-1}{q-1}=1$.

     We take  an imaginary chord $\I\CC$.
 By Lemma~\ref{lem4_imag_chord},
all $q+1$ osculating planes  intersect~$\I\CC$. As all $q+1$ points of $\I\CC$ belong to the same orbit under $G_q$, the number of the osculating planes through every point of $\I\CC$  is the same and $r_{15}=\frac{q+1}{q+1}=1$.

We note, see Table \ref{tab2} and Notation \ref{not1b}, that $r_{41}=n_{1,\C}-r_{11}$,~$r_{42}=n_{1,q+1_\Gamma}^{(0)}-r_{12}$,~
$r_{43}=n_{1,\text{TO}}^{(0)}-r_{13}$,~$r_{44}=n_{1,\text{RC}}^{(0)}-r_{14}$,~
$r_{45}=n_{1,\text{IC}}^{(0)}-r_{15}$. Finally, Theorems \ref{th5_cubic}, \ref{th5_nd1_1}, \ref{th5_RC}, and \ref{th5_TO} provide $n_{1,\C},n_{1,q+1_\Gamma}^{(0)},n_{1,\text{TO}}^{(0)}$, $n_{1,\text{RC}}^{(0)},n_{1,\text{IC}}^{(0)}$.
   \end{description}
 \end{proof}

\section{The number $k_{ij}$ of distinct points in distinct planes of $\PG(3,q)$. Structure of the point-plane incidence matrix}\label{sec_points in planes}
Recall that, by Lemma \ref{lem4_pointinplane}, we have the same number $r_{ij}$ of planes from an orbit $\N_i$ through every point of an orbit  $\M_j$, and vice versa,  the number $k_{ij}$ of points from $\M_j$ in a plane of $\N_i$ is the same for all planes of $\N_i$.
\begin{thm}\label{th6_pointinplane}
For $i,j=1,\ldots,5$, the following holds:
\begin{align}\label{eq6_ones}
 &   k_{ij}\cdot\# \N_i=r_{ij}\cdot\#\M_j\,;\displaybreak[3]\\
&\sum_{j=1}^5 r_{ij}=\sum_{i=1}^5 k_{ij}=q^2+q+1.\label{eq6_pointsinplane_sum}
\end{align}
\end{thm}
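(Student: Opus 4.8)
The plan is to prove both identities by elementary double-counting arguments using the fact (Lemma \ref{lem4_pointinplane}) that the submatrices $\I_{ij}$ are tactical configurations, together with basic incidence counts in $\PG(3,q)$.

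First, I would establish \eqref{eq6_ones}. The quantity $k_{ij}\cdot\#\N_i$ counts the number of ones in the submatrix $\I_{ij}$ by summing along rows: each of the $\#\N_i$ rows (planes of $\N_i$) contains exactly $k_{ij}$ ones, since by Lemma \ref{lem4_pointinplane}(ii) every plane of $\N_i$ contains the same number $k_{ij}$ of points from $\M_j$. Dually, $r_{ij}\cdot\#\M_j$ counts the same number of ones by summing down columns: each of the $\#\M_j$ columns (points of $\M_j$) contains exactly $r_{ij}$ ones, by Lemma \ref{lem4_pointinplane}(i). Since both expressions count the total number of incident (plane, point) pairs with the plane in $\N_i$ and the point in $\M_j$, they are equal. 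This is immediate once the tactical-configuration property is in hand.

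Next, for \eqref{eq6_pointsinplane_sum}, I would argue as follows. Fix a plane $\pi$ of the orbit $\N_i$. Since the point-orbits $\M_1,\dots,\M_5$ partition the point set of $\PG(3,q)$, the total number of points of $\PG(3,q)$ lying on $\pi$ equals $\sum_{j=1}^5$ (number of points of $\M_j$ on $\pi$) $=\sum_{j=1}^5 k_{ij}$. But any plane in $\PG(3,q)$ contains exactly $\theta_{2,q}=q^2+q+1$ points, so $\sum_{j=1}^5 k_{ij}=q^2+q+1$. Dually, fix a point $A$ of the orbit $\M_j$; since the plane-orbits $\N_1,\dots,\N_5$ partition the plane set of $\PG(3,q)$, the total number of planes through $A$ equals $\sum_{i=1}^5 r_{ij}$, and every point of $\PG(3,q)$ lies on exactly $\theta_{2,q}=q^2+q+1$ planes, giving $\sum_{i=1}^5 r_{ij}=q^2+q+1$.

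There is no real obstacle here: both statements are formal consequences of the orbit-partition structure and the standard counts $\theta_{2,q}=q^2+q+1$ for points on a plane and planes through a point in $\PG(3,q)$, which hold by self-duality of $\PG(3,q)$. The only point requiring care is to invoke Lemma \ref{lem4_pointinplane} to justify that $k_{ij}$ and $r_{ij}$ are well-defined (independent of the chosen plane or point within the orbit); this has already been noted in Section \ref{subsec_incid} and is exactly what makes each $\I_{ij}$ a tactical configuration. Identity \eqref{eq6_ones} is then the usual row-sum/column-sum balance for tactical configurations, and \eqref{eq6_pointsinplane_sum} follows from summing over the partition into orbits.
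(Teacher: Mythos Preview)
Your proof is correct and follows essentially the same approach as the paper: double-counting the incident pairs $(\pi,P)\in\N_i\times\M_j$ for \eqref{eq6_ones}, and using that the $\M_j$ (resp.\ $\N_i$) partition the points (resp.\ planes) of $\PG(3,q)$ together with $\theta_{2,q}=q^2+q+1$ for \eqref{eq6_pointsinplane_sum}. Your argument is in fact more explicit than the paper's, which compresses the second part into a single sentence.
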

\begin{proof}
The cardinality of the multiset consisting of the points of $\M_j$ in all planes of $\N_i$ is equal to $r_{ij}\cdot\#\M_j$. By Lemma \ref{lem4_pointinplane}, every plane of $\N_i$ contains the same number of points of $\M_j$. Thus, $k_{ij}= \frac{r_{ij}\cdot \#\M_j}{\#\N_i}. $

Relation \eqref{eq6_pointsinplane_sum} holds as  $\PG(3,q)$ is partitioned  under $G_q$ into 5 orbits $\M_j$ and $\N_i$.
 \end{proof}

The values $r_{ij}$ and $k_{ij}$ are collected in Tables \ref{tab1} and~\ref{tab2}.

Recall that the point-plane incidence matrix  of the $\PG(3,q)$ consists of 25 submatrices~$\I_{ij}$.
The submatrix $\I_{ij}$ has size $\#\N_i\times\#\M_j$; it contains $k_{ij}$ ones in every row and $r_{ij}$ ones in every column, see \eqref{eq6_ones}.
\begin{proposition}
   For $q\not\equiv0\pmod3$,   $\I_{ij}^{tr}=\I_{ji}$ up to rearrangement of rows and columns. Also,
   \begin{align*}
    \#\N_i=\#\M_i,~\#\M_j=\#\N_j,~k_{ij}=r_{ji}, ~r_{ij}=k_{ji},~i,j\in\{1,\ldots,5\}.
   \end{align*}
\end{proposition}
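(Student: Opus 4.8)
The key tool is Theorem~\ref{th2_HirsCor4,5}(D): when $q\not\equiv0\pmod3$, the null polarity $\mathfrak A$ is a bijection of $\PG(3,q)$ interchanging points and planes, it interchanges $\C$ and $\Gamma$, and it maps each point-orbit $\M_i$ onto the plane-orbit $\N_i$ (and hence $\#\M_i=\#\N_i$). So the plan is to exploit the fact that a null polarity is incidence-preserving in the sense that a point $P$ lies on a plane $\pi$ if and only if the point $\pi\mathfrak A$ lies on the plane $P\mathfrak A$ (this is exactly the defining property of a polarity coming from a symplectic/alternating form: $P\in\pi\iff \langle P,\pi^{\mathfrak A}\rangle=0 \iff \pi\mathfrak A\in P\mathfrak A$). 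From this the whole statement falls out almost formally.

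First I would record the equalities on cardinalities: they are immediate from \eqref{eq2_null_pol}, since $\mathfrak A$ restricts to a bijection $\M_i\to\N_i$ and $\M_j\to\N_j$. Next I would set up the identification of submatrices. Index the rows of $\I$ by planes and the columns by points. Applying $\mathfrak A$ sends the plane-set $\N_i$ bijectively to the point-set $\M_i=\N_i\mathfrak A$ and the point-set $\M_j$ bijectively to the plane-set $\N_j=\M_j\mathfrak A$. Under these two relabellings, the $(\pi,P)$-entry of $\I_{ij}$, namely the indicator of $P\in\pi$, becomes the indicator of $\pi\mathfrak A\in P\mathfrak A$, i.e. the $(P\mathfrak A,\pi\mathfrak A)$-entry of $\I_{ji}$ (now $P\mathfrak A$ is a plane in $\N_j$, $\pi\mathfrak A$ is a point in $\M_i$). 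Hence $\I_{ij}$ and $\I_{ji}^{\,tr}$ agree after permuting rows and columns, which is the statement $\I_{ij}^{\,tr}=\I_{ji}$ up to rearrangement. Finally, $k_{ij}$ is the number of ones in a row of $\I_{ij}$ and $r_{ji}$ is the number of ones in a column of $\I_{ji}$; since $\I_{ij}^{\,tr}=\I_{ji}$, a row of $\I_{ij}$ corresponds to a column of $\I_{ji}$, giving $k_{ij}=r_{ji}$, and symmetrically $r_{ij}=k_{ji}$. (One can also derive $k_{ij}=r_{ji}$ purely numerically from \eqref{eq6_ones} together with $\#\N_i=\#\M_i$ and $\#\M_j=\#\N_j$, which is a useful consistency check.)

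The only genuine point to be careful about is the claim that $\mathfrak A$ is incidence-reversing, i.e. $P\in\pi\iff\pi\mathfrak A\in P\mathfrak A$; this is the standard property of the null polarity associated with a non-degenerate alternating form on $\F_q^4$ and is recorded in \cite[Chapter 2.1.5]{Hirs_PGFF} and \cite[Theorem 21.1.2]{Hirs_PG3q}, so I would simply cite it. Everything else is bookkeeping about how $\mathfrak A$ permutes the five orbits, which is already supplied by \eqref{eq2_null_pol}. I do not expect any real obstacle here; the subtlety, such as it is, lies only in stating cleanly that the equalities hold ``up to rearrangement of rows and columns'' — the permutations realizing the identification are precisely the bijections $\N_i\xrightarrow{\mathfrak A}\M_i$ and $\M_j\xrightarrow{\mathfrak A}\N_j$.
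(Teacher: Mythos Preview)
Your proposal is correct and follows exactly the paper's approach: the paper's proof is a single sentence citing Theorem~\ref{th2_HirsCor4,5}(D) and \eqref{eq2_null_pol}, and you have simply spelled out in detail why the null polarity $\mathfrak{A}$ yields the claimed identifications. The extra care you take in articulating the incidence-reversing property $P\in\pi\iff\pi\mathfrak A\in P\mathfrak A$ and the resulting bijection $\I_{ij}\leftrightarrow\I_{ji}^{\,tr}$ is exactly what the paper's terse reference is pointing to.
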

\begin{proof}
    The assertion follows from Theorem \ref{th2_HirsCor4,5}(D), see \eqref{eq2_null_pol}.
 \end{proof}

 In the next proposition we use Notation \ref{notation_1} for a $t$-$(v,k,\lambda)$ design. The definitions of $m$-multiple and decomposable $2$-$(v,k,\lambda)$ designs can be found in \cite[Section II.1.1.6]{HandbCombDes}.
\begin{proposition}
\begin{description}
  \item[(i)] The submatrix $\I_{21}$ is an incidence matrix of a $2$-multiple decomposable $2\text{-}(q+1,2,2)$ design. It can be viewed an union of two incidence matrices of a  $2\text{-}(q+1,2,1)$ design.
  \item[(ii)] The submatrix $\I_{31}$ is an incidence matrix of a $3\text{-}(q+1,3,1)$ and a $2\text{-}(q+1,3,q-1)$ designs.
 \end{description}
\end{proposition}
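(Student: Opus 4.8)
The plan is to identify, in each case, the $q+1$ points of $\C=\M_1$ with the point set $V$ of the design and the planes of the relevant orbit with the block set, and then to read off the parameters $t,k,\lambda$ directly from the combinatorics of planes meeting $\C$, using only \eqref{eq2_plane3points}, the definition of $d_\C$-planes, Lemma~\ref{lem4_3 2}, and the entries $k_{21}=2$, $k_{31}=3$ already recorded in Table~\ref{tab1}.

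For (ii) I would start from the observation that, by \eqref{eq2_plane3points}, any three distinct points $P(t_1),P(t_2),P(t_3)$ of $\C$ span a unique plane, and since no four points of $\C$ are coplanar this plane meets $\C$ in exactly those three points, i.e.\ it is a $3_\C$-plane; conversely $k_{31}=3$ says every $3_\C$-plane carries exactly one $3$-subset of $\C$. Hence the map sending a $3_\C$-plane to its three $\C$-points is a bijection between $\N_3$ and the $\binom{q+1}{3}$ triples of $V$ (consistent with $\#\N_3=\frac{q(q^2-1)}{6}=\binom{q+1}{3}$), so every $3$-subset of $V$ lies in exactly one block and $\I_{31}$ is the incidence matrix of a $3\text{-}(q+1,3,1)$ design. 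The second assertion follows because a $3$-design is automatically a $2$-design, with repetition number $\lambda_2=\lambda_3\cdot\frac{(q+1)-2}{3-2}=q-1$; equivalently, one may quote Lemma~\ref{lem4_3 2} directly, which states that a real chord of $\C$ — i.e.\ a $2$-subset of $\C$ — lies on exactly $q-1$ of the $3_\C$-planes.

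For (i) I would argue one dimension lower. By $k_{21}=2$ each $2_\C$-plane contains exactly one $2$-subset of $\C$, so the $q^2+q$ planes of $\N_2$ are sorted, according to which pair of $\C$-points they contain, into $\binom{q+1}{2}$ groups; by Lemma~\ref{lem4_3 2} each such group has size exactly $2$. Thus every $2$-subset of $V$ lies in exactly two blocks and $\I_{21}$ is the incidence matrix of a $2\text{-}(q+1,2,2)$ design. Decomposability is then immediate: choosing, independently for each of the $\binom{q+1}{2}$ groups, one of its two planes puts $\binom{q+1}{2}$ planes — exactly one through each pair — into a first class and the remaining $\binom{q+1}{2}$ into a second, each class being the incidence matrix of the complete $2\text{-}(q+1,2,1)$ design; this exhibits $\I_{21}$ as $2$-multiple decomposable. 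One may make the splitting canonical by recording in each $2_\C$-plane the point of $\C$ at which it is tangent (the repeated point in the $(2,1)$ intersection pattern obtained by setting $t_1=t_2$ in \eqref{eq2_plane3points}), but this refinement is not needed for the statement.

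There is essentially no obstacle here: everything reduces to the two bijections above together with the already-established Lemma~\ref{lem4_3 2}. The only point deserving a word of care is the precise meaning of "$2$-multiple decomposable" — namely checking that the two halves produced are genuine $2\text{-}(q+1,2,1)$ designs (each pair covered exactly once, not merely on average), which is guaranteed because the groups partition $\N_2$ and each has size exactly $2$.
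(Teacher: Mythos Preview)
Your proposal is correct and follows essentially the same route as the paper: both identify blocks with $2_\C$- or $3_\C$-planes, read $k$ off from $k_{21}=2$, $k_{31}=3$, and obtain $\lambda$ via Lemma~\ref{lem4_3 2} (together with the uniqueness of the plane through three points of~$\C$), with the decomposability in (i) coming from splitting the two $2_\C$-planes over each real chord. The only cosmetic difference is that the paper additionally invokes Theorem~\ref{th2_HirsCor4,5}(C(ii)) when justifying that each half is a $2\text{-}(q+1,2,1)$ design, whereas your partition-by-pairs argument shows this directly.
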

\begin{proof}
\begin{description}
  \item[(i)]
  Rows and columns of the $2\binom{q+1}{2}\times(q+1)$ submatrix $\I_{21}$ are labeled, respectively, by $2_\C$-planes and $\C$-points, see Tables \ref{tab1} and \ref{tab2}. A column of $\I_{21}$ corresponds to a point of a design, i.e. $v=q+1$. A row corresponds to a $k$-block. By the definition of a $2_\C$-plane, every row contains exactly two ones, i.e. $k=2$. We consider a $t$-$(q+1,2,\lambda)$ design. Let $t=2$. Each two points of $\C$ generate a real chord. By Lemma~\ref{lem4_3 2}, there are exactly two $2_\C$-planes through a real chord. So, $\lambda=2$.

    We partition the $2\binom{q+1}{2}$-set of $2_\C$-planes into two $\binom{q+1}{2}$-subsets $B_1$ and $B_2$ as follows: for each real chord we place one of two $2_\C$-planes through it to $B_1$ and other one to $B_2$. By Theorem \ref{th2_HirsCor4,5}(C(ii)), no two real chords of $\C$ meet off $\C$. Therefore, $B_i$ gives 2-blocks of a $2\text{-}(q+1,2,1)$ design, $i=1,2$.
       \item[(ii)] We act similarly to part (i). Rows and columns of the $\frac{1}{6}(q^3-q)\times(q+1)$ submatrix $\I_{31}$ are labeled, respectively, by $3_\C$-planes and $\C$-points. By the definition of a $3_\C$-plane, every row contains exactly three ones. A column (resp. row) of $\I_{31}$ corresponds to a point (resp. a $3$-block) of the design.  Thus, $v=q+1$, $k=3$.

We consider a $t$-$(q+1,3,\lambda)$ design.
       For $t=3$, note that there is one and only one $3_\C$-plane through any three points of $\C$, i.e. $\lambda=1$.

Let $t=2$. There is a real chord through each two points of $\C$. By Lemma~\ref{lem4_3 2}, the number of $3_\C$-planes through a real chord is equal to $q-1$. So, $\lambda=q-1$. In addition, we note that, by \cite[Section II.4.2, Theorem 4.8]{HandbCombDesCh4}, every $3$-$(q+1,3,1)$ design is also a $2$-$(q+1,3,q-1)$ design.
\end{description}

\end{proof}

\begin{corollary}
From Tables \emph{\ref{tab1}} and~\emph{\ref{tab2}} the following holds:
\begin{description}
  \item[(i)]For $q\equiv0\pmod3$, up to rearrangement of rows and columns, we have
  \begin{align*}
    \I_{41}^{tr}=\I_{14},~ \I_{41}^{tr}=\I_{15},~ \I_{42}^{tr}=\I_{14},~ \I_{42}^{tr}=\I_{15}.
\end{align*}
  \item[(ii)] If $\#\N_i=\#\M_j$, then  the submatrix $\I_{ij}$ gives rise to a symmetric tactical configuration with $ k_{ij}=r_{ij}$. This holds  for $\I_{ii}$, $i=1,\ldots,5$, when $q\not\equiv0\pmod3$ and for $\I_{44},\I_{45}$ when $q\equiv0\pmod3$.
 \end{description}
\end{corollary}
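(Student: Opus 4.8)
The plan is to prove the statement purely by reading the replication numbers off Table~\ref{tab2} and feeding them into two elementary facts: the identity \eqref{eq6_ones}, and a rigidity principle for incidence matrices one of whose replication numbers equals $1$. The principle I will use is the following: \emph{a $0$-$1$ matrix $M$ having exactly one $1$ in each row is determined, up to permutation of its rows and columns, by its number of columns together with the multiset of its column sums}; in particular, two such matrices of equal size all of whose column sums equal a common value are equal up to rearrangement of rows and columns. Its proof is immediate — the rows of $M$ amount to a map from the set of rows to the set of columns, the fibre over column $j$ having size equal to the $j$-th column sum, and permuting the rows into consecutive blocks lying over columns $1,2,\dots$ in turn brings $M$ to a normal form depending only on the fibre-size multiset; transposition yields the dual statement for matrices with one $1$ per column.

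For part (i) I would fix $q\equiv0\pmod3$ and record from Table~\ref{tab2} that $k_{41}=k_{42}=1$, $r_{41}=r_{42}=\frac{1}{2}(q^2-q)$, $r_{14}=r_{15}=1$, $k_{14}=k_{15}=\frac{1}{2}(q^2-q)$, and (from the orbit sizes in Theorem~\ref{th2_HirsCor4,5}(B)) that $\I_{41},\I_{42}$ both have size $\frac{1}{2}(q^3-q)\times(q+1)$ while $\I_{14},\I_{15}$ both have size $(q+1)\times\frac{1}{2}(q^3-q)$. Since a row of $\I_{41}$ or $\I_{42}$ carries its single $1$ on the unique $\C$-point (respectively, $\M_2$-point) of the corresponding $1_\C\setminus\Gamma$-plane, and every column sum equals $\frac{1}{2}(q^2-q)$ by Lemma~\ref{lem4_pointinplane}, the rigidity principle gives $\I_{41}=\I_{42}$ up to rearrangement; dually, each column of $\I_{14}$ or $\I_{15}$ has a single $1$ and all row sums are $\frac{1}{2}(q^2-q)$, so $\I_{14}=\I_{15}$ up to rearrangement; and since $\I_{41}^{tr}$ has one $1$ per column, row sums $\frac{1}{2}(q^2-q)=k_{14}$, and the same size as $\I_{14}$, also $\I_{41}^{tr}=\I_{14}$ up to rearrangement. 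Combining these three equivalences delivers all four asserted relations, their mutual consistency simply reflecting that $\I_{14}$ and $\I_{15}$ lie in a single equivalence class.

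For part (ii) I would use that every $\I_{ij}$ is a tactical configuration (Section~\ref{subsec_incid}, via Lemma~\ref{lem4_pointinplane}), so that \eqref{eq6_ones} reads $k_{ij}\cdot\#\N_i=r_{ij}\cdot\#\M_j$; when $\#\N_i=\#\M_j$ this forces $k_{ij}=r_{ij}$, so $\I_{ij}$ is a square, hence symmetric, tactical configuration. It then remains to scan the orbit sizes in Theorem~\ref{th2_HirsCor4,5}(B): for $q\not\equiv0\pmod3$ one has $\#\N_i=\#\M_i$ by Theorem~\ref{th2_HirsCor4,5}(D), so each $\I_{ii}$, $i=1,\dots,5$, qualifies; for $q\equiv0\pmod3$ one has $\#\N_4=\#\M_4=\#\M_5=\frac{1}{2}(q^3-q)$, so $\I_{44}$ and $\I_{45}$ qualify. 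I do not expect a genuine obstacle anywhere; the one point worth stressing is that coincidence of the four parameters $\#\N_i,\#\M_j,k_{ij},r_{ij}$ does \emph{not} in general force two incidence matrices to agree up to rearrangement, and what rescues part (i) is precisely the degeneracy — one replication number equal to $1$ — which collapses each of the matrices in question to a partition of a set into equal-size blocks and hence makes it rigid. Isolating and exploiting that rigidity is the only non-bookkeeping step.
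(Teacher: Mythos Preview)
Your proposal is correct. The paper supplies no proof for this corollary at all---it is recorded as an immediate consequence of Tables~\ref{tab1} and~\ref{tab2}---so there is no ``paper's own proof'' to compare against; what you have written is precisely the argument any reader would have to reconstruct, and both parts are sound.

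One remark on part~(i): your rigidity principle is indeed the point. The equalities $k_{41}=k_{42}=r_{14}=r_{15}=1$ in Table~\ref{tab2} are what make the four matrices collapse to partitions of a $\tfrac{1}{2}(q^3-q)$-set into $q+1$ blocks of size $\tfrac{1}{2}(q^2-q)$, and any two such are equal up to rearrangement. Without that degeneracy the tables alone would not suffice, as you correctly emphasise. On part~(ii), your appeal to \eqref{eq6_ones} is exactly right; you might add for completeness that, for $q\equiv0\pmod3$, the equalities $\#\N_1=\#\M_1=\#\M_2=q+1$ show that $\I_{11}$ and $\I_{12}$ are also symmetric tactical configurations---the paper's list is illustrative rather than exhaustive.
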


\begin{proposition}
Let $q\equiv \xi\pmod3$. Let $i=1,\ldots,5$. Up to rearrangement of rows and columns, the following holds:
 \begin{description}
   \item[(i)]  The submatrix $\I_{i1}$ for $\xi=-1,1$ and for $\xi=0$ is the same;
   \item[(ii)] The submatrix $\I_{i4}$ for $\xi=1$ is the same as the submatrix $\I_{i5}$ for $\xi=0$;
   \item[(iii)]  The submatrix $\I_{i4}$ for $\xi=-1$ and for $\xi=0$ is the same.
 \end{description}
\end{proposition}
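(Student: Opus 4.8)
The plan is to reduce all three statements to a single observation: the point-plane incidence matrix of $\PG(3,q)$, and the labelling of the five plane orbits $\N_i$, depend on $q$ only, so two of our submatrices coincide (up to rearrangement of rows and columns) as soon as their sets of columns, regarded simply as sets of points, coincide. The first step is therefore to recall from Theorem \ref{th2_HirsCor4,5}(B(i)) that for every $q\ge5$ the orbit $\N_i$ is, respectively, the set of $\Gamma$-planes ($i=1$), of $2_\C$-planes ($i=2$), of $3_\C$-planes ($i=3$), of $1_\C\setminus\Gamma$-planes ($i=4$), and of $0_\C$-planes ($i=5$) --- a description with no reference to $\xi$.

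Next I would read off from Theorem \ref{th2_HirsCor4,5}(B(ii)),(B(iii)) the identifications of the point orbits involved: $\M_1=\C$ for every $\xi$; the set of IC-points equals $\M_4$ when $\xi=1$ and $\M_5$ when $\xi=0$; and the set of RC-points equals $\M_4$ when $\xi=-1$ and, again, $\M_4$ when $\xi=0$. Combined with the first step this already yields the proposition: in part (i) both submatrices $\I_{i1}$ record the incidence between the ($\xi$-free) family $\N_i$ and the ($\xi$-free) set $\C$; in part (ii) both submatrices record the incidence between $\N_i$ and the set of IC-points; in part (iii) both record the incidence between $\N_i$ and the set of RC-points. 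In each case the two prescriptions are literally the same function of $q$, hence the resulting $\{0,1\}$-matrices agree up to rearrangement of rows and columns. As a numerical check one confirms that the $\M_1$-columns of Tables \ref{tab1} and \ref{tab2} carry the same entries $k_{ij},r_{ij}$, that the $\M_4$-column of Table \ref{tab1} for $\xi=1$ equals the $\M_5$-column of Table \ref{tab2}, and that the $\M_4$-column of Table \ref{tab1} for $\xi=-1$ equals the $\M_4$-column of Table \ref{tab2}.

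To make the argument self-contained one may also pin each submatrix down explicitly. For (i): $\I_{11}$ is a $(q+1)\times(q+1)$ permutation matrix (the osculating plane $\pi_{\text{osc}}(t)$ meets $\C$ only in $P(t)$); by Lemma \ref{lem4_3 2}, $\I_{21}$ is the incidence matrix of the $2$-$(q+1,2,2)$ design of all $2$-subsets of $\C$ taken twice, and by \eqref{eq2_plane3points} together with the definition of $\C$, $\I_{31}$ is the incidence matrix of the $3$-$(q+1,3,1)$ design of all $3$-subsets of $\C$, each of these designs being unique up to isomorphism; $\I_{41}$ is, up to rearrangement, the direct sum of $q+1$ all-ones column vectors of length $r_{41}=n_{1,\C}-r_{11}=\tfrac{1}{2}(q^2-q)$ by Theorem \ref{th5_cubic}; and $\I_{51}=0$ --- none of this mentions $\xi$. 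For (ii) and (iii) one argues similarly, using Theorem \ref{th2_HirsCor4,5}(C) (the $\binom{q}{2}$ imaginary chords are pairwise skew and partition the IC-points into classes of size $q+1$; the $\binom{q+1}{2}$ real chords meet pairwise only on $\C$ and partition the RC-points into classes of size $q-1$) together with Lemmas \ref{lem4_imag_chord} and \ref{lem4_3 2}, which record which planes of $\N_i$ contain, and in how many points otherwise meet, a given chord. The main obstacle --- and the one place where more than the row and column sums is required --- is that, unlike $\I_{i1}$, the submatrices $\I_{24},\I_{34},\I_{44},\I_{54}$ are not designs and are not determined by their parameters $k_{ij},r_{ij}$ alone, so to call them ``the same'' one must check that the finer datum (which plane of $\N_i$ meets a given chord in which of its $q+1$ points) is itself $\xi$-independent. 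This holds because every object in play --- the cubic $\C$ in the canonical form \eqref{eq2_cubic}, the osculating planes, the real and imaginary chords, the five plane-types, and all their mutual incidences --- is cut out over $\F_q$ by formulas that do not branch on $q\bmod3$; the residue $\xi$ intervenes only in the way the points \emph{off} $\C$ split into the $G_q$-orbits $\M_j$, and that partition is immaterial once the column set of a submatrix has been fixed, as a set, to be $\C$, or all IC-points, or all RC-points.
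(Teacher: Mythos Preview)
Your proposal is correct and follows essentially the same approach as the paper: the paper's own proof simply notes that part (i) is clear, and for (ii)--(iii) observes from Theorem~\ref{th2_HirsCor4,5}(B) that $\M_4=\{\text{IC-points}\}$ for $\xi=1$ while $\M_5=\{\text{IC-points}\}$ for $\xi=0$, and $\M_4=\{\text{RC-points}\}$ for both $\xi=-1$ and $\xi=0$, then points to Theorems~\ref{th5_nd1_1} and~\ref{th5_RC} for the matching parameters. Your first two paragraphs reproduce exactly this argument; your third paragraph, which pins down each $\I_{i1}$ explicitly and explains why the full incidence pattern (not merely the row/column sums $k_{ij},r_{ij}$) is cut out by $\xi$-free formulas, is additional detail that the paper does not supply but which legitimately strengthens the conclusion.
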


\begin{proof} The assertion (i) is clear. Regarding (ii) and (iii),
  by Theorem \ref{th2_HirsCor4,5}(B), we have $\M_{4}=\{\text{IC-points}\}$ for $\xi=1$ and $\M_{5}=\{\text{IC-points}\}$ for $\xi=0$. Also, $\M_{4}=\{\text{RC-points}\}$ for $\xi=-1$ as well as for $\xi=0$. Finally, see Theorems \ref{th5_nd1_1} and \ref{th5_RC}.
\end{proof}

\begin{thm}\label{th6_q_2_3_4}
 Let the orbits $\N_i$ and $\M_j$ be as in Theorem \emph{\ref{th2_HirsCor4,5}(B)}, see \eqref{eq2_plane orbit_gen}--\eqref{eq2_point_orbits_j=0}. For the twisted cubic $\C$ of \eqref{eq2_cubic} the following holds:
  \begin{description}
    \item[(i)] Let $q=2$. Under the action of the group $G_2\cong\mathbf{S}_3\mathbf{Z}_2^3$ fixing $\C$, there are four orbits $\widehat{\N}_i$ of planes and four orbits $\widehat{\M}_j$ of points where
    \begin{align}
      &\widehat{\N}_1=\N_1\cup\N_4,~ \widehat{\N}_2=\N_2,~ \widehat{\N}_3=\N_3, ~\widehat{\N}_4=\N_5; \label{eq6_q=2}\displaybreak[3]\\
      &\widehat{\M}_1=\M_1,~\widehat{\M}_2=\M_2\cup\M_5, ~\widehat{\M}_3=\M_3, \widehat{\M}_4=\M_4.\notag
    \end{align}
    The subgroup $\mathbf{S_3}\cong PGL(2,2)$ of $G_2$ partitions $\PG(3,2)$ into the orbits $\N_i$ and $\M_j$  as in Theorem \emph{\ref{th2_HirsCor4,5}(B)} for $q\not\equiv0\pmod3$. In this case, the point-plane incidence matrix has the form of Table \emph{\ref{tab1}}.
    \item[(ii)] Let $q=3$. Under the action of the group $G_3\cong\mathbf{S}_4\mathbf{Z}_2^3$ fixing $\C$, there are orbits $\widehat{\N}_i$ and $\widehat{\M}_j$ as in \eqref{eq6_q=2}.
     The subgroup $\mathbf{S_4}\cong PGL(2,3)$ of $G_3$ partitions $\PG(3,3)$ into the orbits $\N_i$ and $\M_j$  as in Theorem \emph{\ref{th2_HirsCor4,5}(B)} for $q\equiv0\pmod3$; the point-plane incidence matrix has the form of Table \emph{\ref{tab2}}.
    \item[(iii)]Let $q=4$. Under the action of the group $G_4\cong\mathbf{S}_5\cong P\Gamma L(2,4)$ fixing $\C$,  there are orbits $\N_i$ and $\M_j$  as in Theorem \emph{\ref{th2_HirsCor4,5}(B)} for $q\not\equiv0\pmod3$. In this case, the point-plane incidence matrix has the form of Table \emph{\ref{tab1}}.
  \end{description}
\end{thm}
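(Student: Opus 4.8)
## Proof Plan for Theorem \ref{th6_q_2_3_4}

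The plan is to handle each of the three small values $q=2,3,4$ by explicit computation, exploiting the fact that $\PG(3,q)$ is tiny in these cases ($\theta_{3,2}=15$, $\theta_{3,3}=40$, $\theta_{3,4}=85$ points and equally many planes). First I would fix the canonical twisted cubic $\C=\{P(t):t\in\F_q^+\}$ as in \eqref{eq2_cubic} and, using the plane-through-three-points formula \eqref{eq2_plane3points} and the osculating plane formulas \eqref{eq2_osc pl}--\eqref{eq2_osc pl_inf}, list every plane of $\PG(3,q)$ together with its intersection number $d$ with $\C$ (i.e. classify it as a $d_\C$-plane or a $\Gamma$-plane). This directly yields the plane orbits $\N_i$ of Theorem \ref{th2_HirsCor4,5}(B(i)) since those cardinalities $\#\N_i$ are polynomial in $q$ and remain meaningful for $q=2,3,4$; one checks they sum to $\theta_{3,q}$. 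Dually, using Theorem \ref{th2_HirsCor4,5}(C(ii)) (every point off $\C$ lies on exactly one chord) together with the tangent/real-chord/imaginary-chord trichotomy and the osculating-plane count, I would classify every point as a $\C$-point, T-point (or TO-point, or $(q+1)_\Gamma$-point when $3\mid q$), RC-point, IC-point, etc., recovering the point sets $\M_j$ of Theorem \ref{th2_HirsCor4,5}(B(ii))--(iii).

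The next step is to identify the acting groups. For $q=2,3$ the relevant fact from Theorem \ref{th2_HirsCor4,5}(A) is that $G_2\cong\mathbf{S}_3\mathbf{Z}_2^3$ and $G_3\cong\mathbf{S}_4\mathbf{Z}_2^3$, and that inside each there is a subgroup $PGL(2,q)$ acting on $\C$ as on the projective line $\PG(1,q)$ (via the triply transitive action). I would verify that the full group $G_q$ fuses certain of the generic orbits — namely it merges $\N_1$ with $\N_4$ and $\N_5$ into larger orbits $\widehat{\N}_i$, and correspondingly merges point orbits into $\widehat{\M}_j$ as displayed in \eqref{eq6_q=2} — by exhibiting an element of $G_q$ outside $PGL(2,q)$ (coming from the $\mathbf{Z}_2^3$ part) that maps, say, a $\Gamma$-plane to a $1_\C\setminus\Gamma$-plane. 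This is the structural content of parts (i) and (ii): the coarse orbits are those of $G_q$, but the refined partition into five classes $\N_i,\M_j$ matching Tables \ref{tab1}/\ref{tab2} is the one induced by the subgroup $PGL(2,q)\cong\mathbf{S}_{q+1}$. For $q=4$, Theorem \ref{th2_HirsCor4,5}(A) gives $G_4\cong\mathbf{S}_5\cong P\Gamma L(2,4)$ already acting with the five generic orbits of each type — here $q=4\not\equiv0\pmod3$, so I verify that the classification produces exactly the orbit sizes of \eqref{eq2_plane orbit_gen} and \eqref{eq2_point_orbits_gen}, with no fusion occurring.

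Once the orbits (under the appropriate group, $PGL(2,q)\cong\mathbf{S}_{q+1}$ in all three cases) are in hand, I would compute the incidence parameters $k_{ij}$ and $r_{ij}$ directly: for each ordered pair $(\N_i,\M_j)$ pick a representative plane (or point) and count. By Lemma \ref{lem4_pointinplane} this count is independent of the representative, so a single check per cell suffices; and the consistency relations \eqref{eq6_ones}--\eqref{eq6_pointsinplane_sum} of Theorem \ref{th6_pointinplane} serve as an arithmetic cross-check. One then reads off that the resulting $5\times 5$ tables of $(k_{ij},r_{ij})$ agree with Table \ref{tab1} for $q=2,4$ and with Table \ref{tab2} for $q=3$, specializing $\xi=-1$ for $q=2$, $\xi=0$ for $q=3$, $\xi=1$ for $q=4$.

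The main obstacle is not conceptual but bookkeeping: one must be careful that the generic orbit-size formulas (several of which involve division by $2$, $3$, or $6$) still yield the correct integers and genuine single orbits for such small $q$ — e.g. for $q=2$ the orbit $\N_3$ has size $\tfrac{1}{6}(q^3-q)=1$ and $\M_2$ has size $q(q+1)=6$, coinciding with a whole $G_2$-orbit, and one must confirm there is no accidental further coincidence or degeneracy. Equally, one must check that $PGL(2,q)$ really does split the $G_q$-orbit $\widehat{\N}_1=\N_1\cup\N_4$ into exactly the two pieces of the claimed sizes $q+1$ and $\tfrac12(q^3-q)$ (and similarly $\widehat{\N}_4$, $\widehat{\M}_2$), which amounts to a short orbit-counting argument using $\#PGL(2,q)=q(q^2-1)$ and the stabilizer of a representative plane. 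Since $\PG(3,q)$ is so small here, all of this can in fact be carried out by direct enumeration (by hand or by computer), so the proof reduces to presenting the verified tables and noting the orbit fusions \eqref{eq6_q=2}.
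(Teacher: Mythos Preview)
Your approach is essentially the same as the paper's: direct case-by-case verification for $q=2,3,4$. The paper's actual proof is a single sentence stating that the group structure comes from Theorem~\ref{th2_HirsCor4,5}(A) and that ``the rest of the assertions are obtained by computer search using the MAGMA computational algebra system,'' so your more explicit hand-enumeration plan is, if anything, more detailed than what the authors present. One small imprecision: you write that $G_q$ ``merges $\N_1$ with $\N_4$ and $\N_5$ into larger orbits,'' but in fact only $\N_1$ and $\N_4$ fuse (into $\widehat{\N}_1$), while $\N_5$ remains a single orbit on its own (relabelled $\widehat{\N}_4$); likewise on the point side only $\M_2$ and $\M_5$ fuse. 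This does not affect the validity of your method, only the description of the expected outcome.
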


\begin{proof}
  The groups $G_i$ are given in Theorem \ref{th2_HirsCor4,5}(A). The rest of the assertions are obtained by computer search using
the MAGMA computational algebra system \cite{MAGMA}.
\end{proof}

\section{The twisted cubic as a multiple covering code and a multiple 2-saturating set}\label{sec_multipl}
For $\rho=2$ and $N=3$, Definition \ref{def2_M1-M3} can be viewed as follows.
\begin{definition}
   Let $S$ be a subset of points of $PG(3,q)$. Then $S$
is said to be $(2,\mu )$-saturating if:
\begin{description}
\item[(M1)] $S$ generates $\PG(3,q)$;
\item[(M2)] there exists a point $Q$ in $\PG(3,q)$ which does not belong to
any bisecant line of~$S$;
\item[(M3)] every point $Q$ in $\PG(3,q)$ not belonging to any bisecant line of
$S$ is such that the number
of planes through  three points of $S$
containing $Q$ is at least $\mu $.
\end{description}
\end{definition}
\begin{thm}\label{th7_twcub=2musat}
The twisted cubic $\C$ of \eqref{eq2_cubic} is a minimal $(2,\mu )$-saturating $(q+1)$-set with $\mu$ as in~\eqref{eq3_mu}.
\end{thm}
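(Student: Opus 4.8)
The plan is to verify the three conditions (M1), (M2), (M3) of the $(2,\mu)$-saturating property directly for $S=\C$, read off the relevant incidence numbers from Tables \ref{tab1} and \ref{tab2}, and then prove minimality by deleting one point of $\C$. Conditions (M1) and (M2) are immediate: $\C$ contains four points no four of which are coplanar, hence it spans $\PG(3,q)$; and since $q\ge5$ there are imaginary chords, so by Theorem \ref{th2_HirsCor4,5}(C(ii)) any point on an imaginary chord lies on no real chord of $\C$, i.e.\ on no bisecant of $\C$, which gives (M2).

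For (M3) I would argue as follows. By Theorem \ref{th2_HirsCor4,5}(C(ii)) every point $Q$ off $\C$ lies on exactly one chord, so $Q$ lies on no bisecant of $\C$ precisely when its unique chord is a tangent or an imaginary chord. Using Theorem \ref{th2_HirsCor4,5}(B) to list the orbits $\M_j$ of such points — the T-points (resp.\ the TO-points and $(q+1)_\Gamma$-points for $q\equiv0\pmod3$) and the point orbits sitting on imaginary chords — the number of planes through three points of $\C$ containing $Q$ is exactly $n_3(Q)=r_{3j}$ for the orbit $\M_j$ of $Q$. Comparing the entries $r_{3j}$ across the relevant columns of Tables \ref{tab1} and \ref{tab2} in the three residue classes $q\equiv-1,0,1\pmod3$, one sees that the minimum of these values is $\mu$ of \eqref{eq3_mu}, attained at the T-points for $q\not\equiv0\pmod3$ and at the TO-points for $q\equiv0\pmod3$. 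Hence (M3) holds with this $\mu$, and $\mu$ is the best possible constant.

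For minimality (Definition \ref{def_minimal saturating set}) I would fix a point $P\in\C$, a real chord $\ell=\overline{PR}$ with $R\in\C\setminus\{P\}$, and a point $Q\in\ell\setminus\{P,R\}$. By Theorem \ref{th2_HirsCor4,5}(C(ii)), $\ell$ is the only chord of $\C$ through $Q$; after deleting $P$ it is no longer a bisecant, so $Q$ lies on no bisecant of $\C\setminus\{P\}$. A plane containing three points of $\C\setminus\{P\}$ is a $3_\C$-plane of $\C$ missing $P$, so the number of such planes through $Q$ equals $n_3(Q)$ minus the number of $3_\C$-planes through both $Q$ and $P$. Every plane through $Q$ and $P$ contains the line $\ell$, so this last number is the number of $3_\C$-planes through the real chord $\ell$, which is $q-1$ by Lemma \ref{lem4_3 2}. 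Taking $n_3(Q)$ from Tables \ref{tab1} and \ref{tab2} (with $Q$ an RC-point, or a $3_\Gamma$- or $0_\Gamma$-point according to $q\bmod 3$) and subtracting $q-1$, a one-line polynomial comparison shows $n_3(Q)-(q-1)<\mu$ for all $q\ge5$. Thus $\C\setminus\{P\}$ fails (M3) and is not $(2,\mu)$-saturating, so $\C$ is minimal.

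The only non-formal ingredient is the observation that every plane through $Q$ and $P$ contains the chord $\ell$, which is what lets Lemma \ref{lem4_3 2} count the "bad" planes through $Q$; everything else reduces to table look-ups. The main (modest) obstacle is bookkeeping: the orbit of the witness point $Q$, and hence the value $n_3(Q)$, changes with $q\bmod 3$, so the final inequality must be checked separately in the three residue classes, although in each case it collapses to an elementary estimate valid for $q\ge5$.
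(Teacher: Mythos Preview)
Your proposal is correct and, for conditions (M1)--(M3), follows exactly the paper's argument: verify spanning, exhibit points off every real chord, and read off the minimum of the $r_{3j}$ over the orbits of points not on real chords from Tables~\ref{tab1} and~\ref{tab2}. The only difference is in the treatment of minimality: the paper simply asserts ``It can be easily seen that $\C$ is a minimal $(2,\mu)$-saturating set'' with no further justification, whereas you supply an explicit witness point $Q$ on a real chord through the deleted point $P$ and use Lemma~\ref{lem4_3 2} to count the lost $3_\C$-planes. Your computation $n_3(Q)-(q-1)<\mu$ indeed holds in each residue class for $q\ge5$ (the tightest case being $q\equiv1\pmod3$ with $Q$ a $3_\Gamma$-point, where it reduces to $q>4$), so your argument genuinely fills in what the paper leaves implicit.
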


\begin{proof}
\begin{description}
\item[(M1)]
 Any 4 points of $\C$ generate $\PG(3,q)$.
\item[(M2)] Apart from RC-points, all points  off $\C$ do not belong to
any bisecant line of $\C$.
\item[(M3)] Recall that  $n_{3,\bullet}^{(\xi)}$ is the number of $3_\C$-planes through a point of the type~$\bullet$. By Theorem \ref{th3_main} and Tables \ref{tab1} and \ref{tab2}, among points not lying on real chords the smallest value of $n_{3,\bullet}^{(\xi)}$ is $n_{3,\text{T}}^{(\ne0)}=(q^2-3q+2)/6$ if $q\not\equiv 0\pmod 3$ or
$n_{3,\text{TO}}^{(0)}=(q^2-3q)/6$ if $q\equiv 0\pmod3$.
\end{description}

It can be easily seen that  $\C$ is a \emph{minimal} $(2,\mu )$-saturating set.
 \end{proof}

\begin{thm}
    Let $\mu$ be as in \eqref{eq3_mu}. Let $C$ be the  code  associated with the twisted cubic $\C$ of \eqref{eq2_cubic}. Then
    \begin{description}
      \item[(i)]
    The code $C$ is a $[q+1,q-3,5]_q3$ quasi-perfect GDRS code of covering radius $R=3$ and, moreover, $C$ is a $(3,\mu)$-MCF code.
      \item[(ii)] The $\mu$-density $\gamma _{\mu }(C,3,q)$ of the code $C$ tends to $1$ from above when $q$ tends to infinity, i.e.
          \begin{align}\label{eq7_limit}
            \lim_{q\rightarrow\infty}\gamma _{\mu }(C,3,q)=1,~\gamma _{\mu }(C,3,q)>1.
          \end{align}
          Thereby, we have an asymptotical optimal collection of MCF codes.
    \end{description}
\end{thm}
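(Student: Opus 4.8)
The plan is to prove the two parts separately, leaning on the saturating-set result already established. For part (i), I would first recall that the twisted cubic $\C$ is a $(q+1)$-arc, hence the $q+1$ points of $\C$ (in homogeneous coordinates) form a parity-check matrix of an MDS code with parameters $[q+1,q-3,5]_q$: length $q+1$, codimension $4$, and minimum distance $d=4+1=5$ since every four columns are linearly independent (no four points of $\C$ are coplanar) while some five columns are dependent. This is precisely the GDRS code attached to the normal rational curve in $\PG(3,q)$. Since $\C$ is a $(2,\mu)$-saturating set by Theorem~\ref{th7_twcub=2musat}, Proposition~\ref{prop2_connection} (with $\rho=2$, $N=3$, $n=q+1$, $k=q-3$) yields immediately that $C$ is a $(3,\mu)$-MCF code. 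The covering radius is $R=3$: it is at most $3$ because the $3$-saturating property says every vector lies within distance $3$ of the code, and it is at least $3$ because (M2) supplies a point not on any bisecant of $\C$, i.e.\ a coset leader of weight exactly $3$. As $d=5\ge 2R-1=5$, the code is quasi-perfect in the relevant sense and formula \eqref{eq2_mudens_d>=2R-1} applies.

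For part (ii), I would substitute into \eqref{eq2_mudens_d>=2R-1} with $n=q+1$, $n-k=4$, $R=3$, so that
\begin{align*}
\gamma_\mu(C,3,q)=\frac{\binom{q+1}{3}(q-1)^3-\binom{5}{2}A_5(C)}{\mu\left(q^4-1-(q+1)(q-1)-\binom{q+1}{2}(q-1)^2\right)}.
\end{align*}
The denominator's parenthetical factor is $q^4-\sum_{i=0}^{2}\binom{q+1}{i}(q-1)^i$, which expands to a polynomial in $q$ of degree $4$ with leading term $\tfrac12 q^4$ (after combining). Here $A_5(C)$ is the number of minimum-weight codewords: these correspond to the $5$-subsets of columns (points of $\C$) that are linearly dependent, equivalently to the $5$-point subsets of the normal rational curve lying in a common hyperplane of a suitable projection — for an MDS code of this type $A_5(C)=\binom{q+1}{5}(q-1)$ up to the standard MDS weight-distribution formula, giving a term of order $q^6$. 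Wait — more carefully, the numerator's dominant term is $\binom{q+1}{3}(q-1)^3\sim \tfrac16 q^6$, and the $A_5$ correction is also of order $q^6$; I would compute both leading coefficients exactly. Since $\mu\sim \tfrac16 q^2$ and the denominator factor $\sim\tfrac12 q^4$, the denominator is $\sim\tfrac1{12}q^6$, so the whole ratio tends to a finite limit which the computation must show equals $1$.

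The key computation is therefore the exact leading-order (and, for the strict inequality $\gamma_\mu>1$, possibly the next-order) behaviour of numerator minus denominator. I would write $A_5(C)$ explicitly using the known MDS weight spectrum, namely $A_5(C)=\binom{q+1}{5}(q-1)$ for a $[q+1,q-3,5]_q$ MDS code (the number of weight-$5$ codewords of an MDS code with these parameters), substitute, clear denominators, and verify that numerator $=$ denominator $+$ (lower-order positive terms). Concretely: expand $\binom{q+1}{3}(q-1)^3 - 10\binom{q+1}{5}(q-1)$ as a polynomial in $q$, expand $\mu\cdot\big(q^4-1-(q^2-1)-\binom{q+1}{2}(q-1)^2\big)$ as a polynomial in $q$, and show the difference is a polynomial with positive leading behaviour and nonnegative value for all $q\ge5$ (treating the two congruence classes of $\mu$ in \eqref{eq3_mu} separately). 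The main obstacle is purely bookkeeping: the polynomials involved have degree $6$, the two cases $q\equiv 0$ and $q\not\equiv 0\pmod 3$ must both be checked, and one must be careful that $\binom{5}{2}=10$ and that the MDS value of $A_5(C)$ is used correctly (rather than, say, the value for a general linear code). Once the difference polynomial is seen to be positive for $q\ge 5$ and of degree strictly less than $6$ relative to... — rather, once the ratio of leading coefficients is confirmed to be $1$ and the difference is positive, both assertions in \eqref{eq7_limit} follow, establishing the asymptotic optimality.
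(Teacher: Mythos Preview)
Your proposal is correct and follows essentially the same route as the paper: part (i) is obtained from the normal-rational-curve/GDRS identification together with Proposition~\ref{prop2_connection} and Theorem~\ref{th7_twcub=2musat}, and part (ii) by substituting $n=q+1$, $R=3$, and the MDS value $A_5(C)=(q-1)\binom{q+1}{5}$ into \eqref{eq2_mudens_d>=2R-1}, then expanding and comparing the resulting degree-$6$ polynomials (treating the two values of $\mu$ separately). The only difference is cosmetic: the paper simply records the resulting rational function and reads off the limit, whereas you spell out the bookkeeping more explicitly.
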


\begin{proof}
    \begin{description}
      \item[(i)]
The twisted cubic is a normal rational curve. It is well known that a normal rational curve in $\PG(N,q)$ gives rise to a $[q+1,q-N,N+2]_q$ GDRS code. Also, by Proposition~\ref{prop2_connection} and Theorem~\ref{th7_twcub=2musat}, $C$ is a $(3,\mu)$-MCF code.
      \item[(ii)] Since $d(C)=
2R-1$, we have, by \eqref{eq2_mudens_d>=2R-1},
      \begin{align*}
&\gamma _{\mu }(C,3,q)=\frac{{\binom{q+1}{{3}}} (q-1)^{R}-{\binom{5}{{2}
}} (q-1)\binom{q+1}{5}}{\mu \left(q^{4}-1-(q^2-1)-\binom{q+1}{2}(q-1)^{2}\right)},
\end{align*}
where  $A_{2R-1}(C)=A_d(C)=(q-1)\binom{n}{d}$ as $C$ is an MDS code \cite{MWS,Roth}. After simple transformations, for $\mu=\frac{q^2-3q+2}{6}$, we obtain
\begin{align*}
\gamma_{\mu }(C,3,q)=\frac{\frac{1}{12}q^6 - \frac{1}{2}q^4 + \frac{1}{3}q^3 + \frac{5}{12}q^2 - \frac{1}{3}q}{\frac{1}{12}q^6- \frac{3}{4}q^4 + \frac{2}{3}q^3 + \frac{2}{3}q^2 - \frac{2}{3}q}
\end{align*}
          whence \eqref{eq7_limit} immediately follows. For $\mu=\frac{q^2-3q}{6}$ the proof is the same.
    \end{description}
 \end{proof}

\begin{rmk}
The Newton radius of a code is the largest Hamming weight of a uniquely correctable error, see \cite{GabKloeNewt,HelKlNewt} and the references therein. The $[q+1,q-3,5]_q3$ code $C$ associated with the cubic $\C$ of \eqref{eq2_cubic} corrects all double errors. In the geometrical language, a double error represents an RC-point as a linear combination of two $\C$-points. A point $A$ off $\C$ that does not lie on a real chord can be represented by a linear combination of three $\C$-points. This corresponds to a triple error. On the other hand, these three points generate a $3_\C$-plane in which $A$ lies. By Tables \ref{tab1} and \ref{tab2}, every point off $\C$ lies on more than one $3_\C$-plane. This means that distinct triple errors can give the same result, therefore triple errors are not uniquely correctable. Thus, Newton radius of $C$ is equal to two.
\end{rmk}

\end{document}